\theoremstyle{plain}
\theoremstyle{remark}
\newtheorem{theoremm}{Theorem} 
\newtheorem{corollary}[theoremm]{Corollary}
\newtheorem{lma}{Lemma}
\newcommand{\E}{\operatorname{E}}
\newcommand{\pr}{\operatorname{Pr}}
\newcommand{\var}{\operatorname{Var}}
\newcommand{\tr}{\operatorname{tr}}
\newcommand{\diag}{\operatorname{diag}}
\newcommand{\bA}{\mathbf{A}}
\newcommand{\bB}{\mathbf{B}}
\newcommand{\bC}{\mathbf{C}}
\newcommand{\bD}{\mathbf{D}}
\newcommand{\bF}{\mathbf{F}}
\newcommand{\bI}{\mathbf{I}}
\newcommand{\bJ}{\mathbf{J}}
\newcommand{\bK}{\mathbf{K}}
\newcommand{\bT}{\mathbf{T}}
\newcommand{\bV}{\mathbf{V}}
\newcommand{\bX}{\mathbf{X}}
\newcommand{\bZ}{\mathbf{Z}}
\newcommand{\ba}{\mathbf{a}}
\newcommand{\be}{\mathbf{e}}
\newcommand{\bbf}{\mathbf{f}}
\newcommand{\bo}{\mathbf{o}}
\newcommand{\bx}{\mathbf{x}}
\newcommand{\by}{\mathbf{y}} 
\newcommand{\dlambda}{\dot{\lambda}}
\newcommand{\balpha}{\boldsymbol{\alpha}}
\newcommand{\btheta}{\boldsymbol{\theta}}
\newcommand{\bomega}{\boldsymbol{\omega}}
\newcommand{\blambda}{\boldsymbol{\lambda}}
\newcommand{\bgamma}{\boldsymbol{\gamma}}
\newcommand{\bpsi}{\boldsymbol{\psi}}
\newcommand{\bphi}{\boldsymbol{\phi}}
\newcommand{\bbeta}{\boldsymbol{\beta}}
\newcommand{\bnu}{\boldsymbol{\nu}}
\newcommand{\bzero}{\boldsymbol{0}}
\newcommand{\bone}{\boldsymbol{1}}
\newcommand{\bxi}{\boldsymbol{\xi}}
\newcommand{\hbtheta}{\hat{\btheta}}
\newcommand{\hbomega}{\hat{\bomega}}
\newcommand{\bOmega}{\boldsymbol{\Omega}}
\newcommand{\siga}{{\sigma}_{\alpha}}  
\newcommand{\sigb}{{\sigma}_{\beta}}  
\newcommand{\siggama}{{\sigma}_{\gamma}}  
\newcommand{\sige}{{\sigma}_{e}}  
\newcommand{\dsiga}{\dot{\sigma}_{\alpha}}  
\newcommand{\dsigb}{\dot{\sigma}_{\beta}}  
\newcommand{\dsiggama}{\dot{\sigma}_{\gamma}}  
\newcommand{\dsige}{\dot{\sigma}_{e}}
\newcommand{\sumig}{\sum_{i=1}^g}
\newcommand{\sumjh}{\sum_{j=1}^h}
\newcommand{\sumkm}{\sum_{k=1}^m}
\newcommand{\yddd}{\bar{y}}
\newcommand{\yidd}{\bar{y}_{i.}}
\newcommand{\yijd}{\bar{y}_{ij}}
\newcommand{\ydjd}{\bar{y}_{.j}}
\newcommand{\xddd}{\bar{x}}
\newcommand{\xidd}{\bar{x}_{i.}}
\newcommand{\xijd}{\bar{x}_{ij}}
\newcommand{\xdjd}{\bar{x}_{.j}}
\newcommand{\eddd}{\bar{e}}
\newcommand{\eidd}{\bar{e}_{i.}}
\newcommand{\eijd}{\bar{e}_{ij}}
\newcommand{\edjd}{\bar{e}_{.j}}
\newcommand{\gammaid}{\bar{\gamma}_{i.}}
\newcommand{\gammadj}{\bar{\gamma}_{.j}}
\newcommand{\gammadd}{\bar{\gamma}}
\newcommand{\bxai}{\bx_{i}^{(a)}}
\newcommand{\bxait}{\bx_{i}^{(a)T}}
\newcommand{\bxbj}{\bx_{j}^{(b)}}
\newcommand{\bxbjt}{\bx_{j}^{(b)T}}
\newcommand{\bxabij}{\bx_{ij}^{(ab)}}
\newcommand{\bxabijt}{\bx_{ij}^{(ab)T}}
\newcommand{\bxwijk}{\bx_{ijk}^{(w)}}
\newcommand{\bxwijkt}{\bx_{ijk}^{(w)T}}
\newcommand{\barbxa}{\bar{\bx}^{(a)}}
\newcommand{\barbxb}{\bar{\bx}^{(b)}}
\newcommand{\barbxiab}{\bar{\bx}_{i.}^{(ab)}}
\newcommand{\barbxjab}{\bar{\bx}_{.j}^{(ab)}}
\newcommand{\barbxab}{\bar{\bx}^{(ab)}}
\newcommand{\barbxiw}{\bar{\bx}_{i.}^{(w)}}
\newcommand{\barbxjw}{\bar{\bx}_{.j}^{(w)}}
\newcommand{\barbxijw}{\bar{\bx}_{ij}^{(w)}}
\newcommand{\barbxw}{\bar{\bx}_{}^{(w)}}
\newcommand{\barbxat}{\bar{\bx}^{(a)T}}
\newcommand{\barbxbt}{\bar{\bx}^{(b)T}}
\newcommand{\barbxabt}{\bar{\bx}^{(ab)T}}
\newcommand{\barbxwt}{\bar{\bx}_{}^{(w)T}}
\newcommand{\bXa}{\bX^{(a)}}
\newcommand{\bXb}{\bX^{(b)}}
\newcommand{\bXab}{\bX^{(ab)}}
\newcommand{\bXw}{\bX^{(w)}}
\NewDocumentCommand{\barRia}{t^}{%
\IfBooleanTF{#1}
{\barRiaAux}
{\bar{R}_{i.}^{(a)}}%
}
\NewDocumentCommand{\barRiaAux}{m}{%
\bar{R}_{i.}^{(a)#1}%
}
\NewDocumentCommand{\barRjb}{t^}{%
\IfBooleanTF{#1}
{\barRjbAux}
{\bar{R}_{.j}^{(b)}}%
}
\NewDocumentCommand{\barRjbAux}{m}{%
\bar{R}_{.j}^{(b)#1}%
}
\NewDocumentCommand{\barRijab}{t^}{%
\IfBooleanTF{#1}
{\barRijabAux}
{\bar{R}_{ij}^{(ab)}}%
}
\NewDocumentCommand{\barRijabAux}{m}{%
\bar{R}_{ij}^{(ab)#1}%
}
\NewDocumentCommand{\barRijkw}{t^}{%
\IfBooleanTF{#1}
{\barRijkwAux}
{{R}_{ijk}^{(w)}}%
}
\NewDocumentCommand{\barRijkwAux}{m}{%
{R}_{ijk}^{(w)#1}%
}
\newcommand{\sigmacom}{\dot{\tau}}
\newcommand{\hsiga}{\hat{\sigma}_{\alpha}}  
\newcommand{\hsigb}{\hat{\sigma}_{\beta}}
\begin{document}

\begin{frontmatter}
\title{Increasing dimension asymptotics for two-way crossed mixed effect models}
\runtitle{Asymptotics for two-way crossed mixed effect models}

\begin{aug}
\author[A]{\fnms{Ziyang}~\snm{Lyu}\ead[label=e1]{Ziyang.Lyu@unsw.edu.au}\orcid{0000-0003-3307-4148}},
\author[A]{\fnms{S.A.}~\snm{Sisson}\ead[label=e2]{Scott.Sisson@unsw.edu.au}\orcid{0000-0001-8943-067X}},
\and
\author[B]{\fnms{A.H.}~\snm{Welsh}\ead[label=e3]{Alan.Welsh@anu.edu.au}\orcid{0000-0002-3165-9559}}
\address[A]{UNSW Data Science Hub, and School of Mathematics and Statistics,
University of New South Wales\printead[presep={,\ }]{e1,e2}}

\address[B]{Research School of Finance, Actuarial Studies and Statistics,
Australian National University\printead[presep={,\ }]{e3}}
\end{aug}

\begin{abstract}
This paper presents asymptotic results for the maximum likelihood and restricted maximum likelihood (REML) estimators within a two-way crossed mixed effect model as the sizes of the rows, columns, and cells tend to infinity.  Under very mild conditions which do not require the assumption of normality, the estimators are proven to be asymptotically normal, possessing a structured covariance matrix. The growth rate for the number of rows, columns, and cells is unrestricted, whether considered pairwise or collectively.
\end{abstract}

\begin{keyword}[class=MSC]
\kwd[Primary ]{62E20}
\kwd{62J05}
\kwd{62F12}
\end{keyword}

\begin{keyword}
\kwd{Crossed random effect}
\kwd{Maximum likelihood estimator}
\kwd{Kronecker product}
\kwd{Variance components}
\kwd{Asymptotic independence}
\end{keyword}

\end{frontmatter}

\section{Introduction}

Mixed models are a diverse class of statistical models that relate a response variable to both observed covariates and latent random effects. They are widely used in applied statistics, particularly for modeling relationships within clustered and longitudinal data. Pioneering works on this topic include those by \cite{scott1982effect} and \cite{battese1988error} for survey data, and \cite{laird1982random} for longitudinal data. Commonly, linear mixed models are fitted by assuming normality and using either maximum likelihood (ML) or restricted maximum likelihood (REML) estimation; see for example \cite{harville1977maximum}. As these estimators are nonlinear, asymptotic methods are needed to derive their properties and construct approximate inferences for the unknown parameters.

The traditional asymptotic approach, discussed by  \cite{hartley1967maximum}, \cite{anderson1969statistical}, \cite{miller1977asymptotic}, \cite{das1979asymptotic}, \cite{cressie1993asymptotic}, and \cite{richardson1994asymptotic}, applies to data with independent clusters modelled by a nested random effect structure and requires the number of clusters to increase while the cluster size stays bounded.
Recent asymptotic results for mixed model estimators \citep{lyu2019estimation, jiang2022usable} allow both the number of clusters and the cluster sizes to increase, but still require nested random effects.  \cite{jiang2022usable} also require  the ratios of the cluster sizes to the  number of clusters to tend to zero. \cite{lyu2019estimation} do not impose such rate conditions, but their results are restricted to the nested error regression model.

There are real-world scenarios requiring crossed random effects that are not covered by the available results.  For example, an educational dataset recording student performance as the outcome variable can have teachers and class groups of students (treated as random effects) nested within schools but crossed with each other if teachers teach multiple class groups and/or class groups are taught by multiple teachers. In such applications, the clusters or cells (as clusters created by crossing are sometimes described) can be large, allowing the number of observations in them to be treated as asymptotically increasing.   In addition, there are theoretical problems (e.g.~in prediction, see \cite{jiang1998asymptotic}) for which consistency requires both the number of clusters and the cluster sizes to increase.  Moreover,
\citep[section 3.31] {jiang2017asymptotic} point out  that it is very difficult to derive the asymptotic behavior of the MLE in any generalized linear mixed effect model (GLMM) with increasing numbers of crossed random effects, even for the simplest cases. 
Although \cite{jiang2013subset} use a subset argument to prove consistency of the MLE for GLMM parameters with crossed random effects, the method does not lead to the asymptotic distribution of the estimators.
%
Partly for this reason, recent literature on GLMMs with crossed random effects focuses on computational algorithms such as \cite{ghosh2021scalable, ghosh2022backfitting, bellio2023scalable, menictas2023streamlined}; see for example \cite{ghosh2022backfitting} who proposed a backfitting algorithm to compute generalized least squares estimates for the crossed effect model without the interaction random effect, and \cite{menictas2023streamlined} who explored streamlined mean field variational Bayes algorithms for fitting linear mixed models with crossed random effects. 
There is a gap in the literature on asymptotic results for linear mixed models with crossed random effects.
The purpose of this paper is to develop rigorously the asymptotic properties of working normal maximum likelihood and REML estimators for the parameters of crossed random effect models when both the number of cells and their sizes tend to infinity.

We consider the two-way crossed random effect with interaction model as a canonical crossed random effect model.  It is used to analyze data structured by the crossing of two categorical factors, hereinafter referred to as factor A (rows) and factor B (columns), respectively.   
%
Let \( [y_{ijk}, \bx_{ijk}^T]^T \) represent the \( k \)th observed vector in cell \( (i,j) \), where \( y_{ijk} \) is a scalar response variable and \( \bx_{ijk} \) is a $p_0$-dimensional vector of covariates or explanatory variables, where $T$ denotes transpose. Indices \( i \) and \( j \) correspond to the row and column of the cell, respectively, and \( k \) identifies the specific vector within that cell. 
Let the total number of rows, columns, and observations per cell be \( g \), \( h \), and \( m \), respectively, so the sample size \( n = ghm \). (We explain in the next paragraph why we treat the balanced case with the same number of observations $m$ in each cell.)
The two-way crossed random effect with interaction model is
\begin{equation}\label{two-way cross model}
	y_{ijk}=\xi_0+\bx_{ijk}^T\bxi_s+\alpha_i+\beta_j+\gamma_{ij}+e_{ijk},
\end{equation}
for $i=1,\ldots,g, j=1,\ldots,h, k=1,\ldots,m,$
where $\xi_0$ is the intercept, $\bxi_s$ is the $p_0$-dimensional vector of  slope parameters, $\alpha_i$ is the random effect due to the $i$th row, $\beta_j$ is the random effect  due to the $j$th column, $\gamma_{ij}$ is the interaction random effect  of row $i$ and column $j$, and $e_{ijk}$ is the error term.  The variables $\alpha_i$, $\beta_j$, $\gamma_{ij}$, and $e_{ijk}$ are assumed mutually independent with zero means and variances $\sigma_\alpha^2$, $\sigma_\beta^2$, $\sigma_\gamma^2$, and $\sigma_e^2$, respectively; we do not assume normality.

Let $\by=[y_{111},\ldots, y_{ghm}]^T$,  $\bX^*=[\bx_{111},\ldots,\bx_{ghm}]^T$, $\balpha=[\alpha_1,\ldots,\alpha_g]^T$, $\bbeta=[\beta_1,$\ldots$,\beta_h]^T$, $\bgamma=[\gamma_{11},\ldots,\gamma_{gh}]^T$, and $\be=[e_{111},\ldots,e_{ghm}]^T$. 
The indices cycle through their possible values starting with the rightmost index first.
Then we can write (\ref{two-way cross model}) as 
\begin{equation}\label{two-way matrix}
	\by=\xi_0\bone_{n}+\bX^*\bxi_s+\bZ_1\balpha+\bZ_2\bbeta+\bZ_3\bgamma+\bZ_0\be,
\end{equation}
where $\bZ_0=\bI_g\otimes\bI_h\otimes \bI_m$,		$\bZ_1=\bI_g\otimes\bone_h\otimes \bone_m$, $\bZ_2=\bone_g\otimes\bI_h\otimes \bone_m$ and	$\bZ_3=\bI_g\otimes\bI_h\otimes \bone_m$, with $\bone_a$ the $a$-vector of ones, $\bI_a$ a $a \times a$ identity matrix, and $\otimes$ representing the Kronecker product.
The dispersion (variance-covariance) matrix of $\by$, denoted by $\bV$, is given by
\begin{equation}\label{Vmatrix}
	\bV=\bZ_1\bZ_1^T\sigma_\alpha^2+\bZ_2\bZ_2^T\sigma_\beta^2+\bZ_3\bZ_3^T\sigma_\gamma^2+\bZ_0\bZ_0^T\sigma_e^2,
\end{equation}
where
$\bZ_0\bZ_0^T=\bI_g\otimes\bI_h\otimes \bI_m$, $\bZ_1\bZ_1^T=\bI_g\otimes\bJ_h\otimes \bJ_m$,  $\bZ_2\bZ_2^T=\bJ_g\otimes\bI_h\otimes \bJ_m$ and 
$\bZ_3\bZ_3^T=\bI_g\otimes\bI_h\otimes \bJ_m$, with $\bJ_a = \bone_a\bone_a^T$, a $a \times a$ matrix of ones.
Under working normality for both the random effects and errors, the likelihood and the REML criterion can be written down as a function of $\bV^{-1}$, the inverse of the dispersion matrix (\ref{Vmatrix}); see Section \ref{Sec:2}. We call these functions the likelihood and the REML criterion, and refer to the parameter values that maximize them as the maximum likelihood  and REML estimators, respectively.  To simplify the study of these functions and the associated estimators, we use an exact expression for \(\bV^{-1}\) given by \cite{searle1979dispersion}.  This expression leverages the properties of the Kronecker product and is only available in the balanced case. Consequently, in this study, we focus only on balanced data when considering model (\ref{two-way cross model}).  We believe that our results are indicative of what occurs in the unbalanced case.

For our results, we do not assume normality, but instead assume the existence of finite "$4+\delta$" moments for both the random effects and errors. Whether the explanatory variables are considered fixed or random, we impose conditions akin to finite moments of order "$2+\delta$". These conditions are analogous to those used by \cite{ lyu2019eblup,lyu2019estimation} for nested error regression models. We permit the values of $g$, $h$, and $m$ to approach infinity, without restrictions on their relative rates of divergence.  The maximum likelihood and REML estimators converge at potentially four different rates, depending on the level of the parameter being estimated.  Analogously to \cite{lyu2019estimation}, we identify four distinct levels of parameters:
\begin{enumerate}
	\item {Factor A (row) parameters}: Coefficients of factor A (row) level covariates (covariates that vary only with $i$) and the corresponding variance $\sigma_{\alpha}^2$.
	\item {Factor B (column) parameters}: Coefficients of  factor B (column) level covariates (covariates that vary only with $j$) and the corresponding variance $\sigma_{\beta}^2$.	
	\item {Interaction parameters}: Coefficients of  interaction covariates (covariates that vary only with $i$ and $j$) and the corresponding variance $\sigma_{\gamma}^2$.	
	\item {Within cell parameters}: Coefficients of within  cell covariates (covariates that vary with $i$, $j$ and $k$) and the corresponding variance $\sigma_{e}^2$.
\end{enumerate}
The information for factor A parameters increases with \(g\), factor B parameters with \(h\), interaction parameters with \(gh\), and within-cell parameters  with \(n\). Each of these rates requires a different normalization on the corresponding estimators; we achieve this using a diagonal matrix which is readily interpretable.  We derive asymptotic representations for both the maximum likelihood and REML estimators in terms of their influence functions that are useful in their own right and for the development of the central limit theorem.  The asymptotic variance matrix we obtain is elegantly structured with a block diagonal configuration, which is both concise and interpretable, and simplifies making asymptotic inferences for the unknown parameters. 

The remainder of the paper is structured as follows. Section~\ref{Sec:2} introduces useful notation and presents an exact expression for the likelihood estimating equation. Section~\ref{sec:3} presents our main asymptotic theorem. Section~\ref{Sec:4} explains how to interpret and use our asymptotic theorem, including how to construct asymptotically valid statistical inferences for the model parameters. Section~\ref{sec:5} presents a simulation study to support the asymptotic theorem. Section~\ref{Proof} outlines lemmas and proofs for the main asymptotic theorem. Finally, Section~\ref{sec:7} concludes the paper and discusses potential future research directions.

\section{Notation}\label{Sec:2}
A general covariate $x_{ijk}$ can vary with all subscripts (a within cell covariate), with both $i$ and $j$ (an interaction covariate), or with $i$ (a row covariate) or $j$ (a column covariate) alone. We write $\bar{x}= n^{-1}\sum_{i=1}^g\sum_{j=1}^h \sum_{k=1}^m x_{ijk}$, $\bar{x}_{ij} = m^{-1}\sum_{k=1}^{m} x_{ijk}$, $\bar{x}_{i.} = (hm)^{-1} \sum_{j=1}^{h}\sum_{k=1}^{m} x_{ijk}$,  and $\bar{x}_{.j} = (gm)^{-1} \sum_{i=1}^{g}\sum_{k=1}^{m} x_{ijk}$, using the convention that we retain a dot as a placeholder subscript when required (as in $\bar{x}_{i.}$ and $\bar{x}_{.j}$) but not when there is no ambiguity (as in $\bar{x}_{ij}$ and $\bar{x}$).  Then we can decompose a within cell covariate $x_{ijk}$ as $\bar{x} + (\xidd-\xddd)+(\xdjd-\xddd)+(\xijd-\xidd-\xdjd+\xddd)+(x_{ijk}-\xijd)$, the sum of an overall mean, row, column, interaction, and within cell term.  Similarly, we can decompose a covariate at any level into components in each of the higher levels of the hierarchy.  This is important to accommodate the fact that the estimated coefficients of the covariates at each level have different rates of convergence; it also exactly orthogonalizes the covariates.  Finally, allowing the different terms in the decomposition to have possibly different coefficients in the model increases the flexibility of the model.
Following \citep{lyu2019estimation}, we arrange the covariate vector $\bx_{ijk}$ in four subvectors: a $p_a$-vector of row  covariates $\bxai$, a $p_b$-vector of column covariates $\bxbj$, a $p_{ab}$-vector of interaction covariates $\bxabij$, and a $p_w$-vector of within cell covariates $\bxwijk$, where $p=p_a+p_b+p_{ab}+p_w \ge p_0$. Not all of these vectors need be present in the model, but it is helpful to work with the most complete model.

The two-way crossed classification with interaction model (\ref{two-way cross model}) can be generalized to 
\begin{equation}\label{two-way crossed model-rw }
	y_{ijk}=\xi_0+\bxait\bxi_1+\bxbjt\bxi_2+\bxabijt\bxi_3+\bxwijkt\bxi_4+\alpha_i+\beta_j+\gamma_{ij}+e_{ijk},
\end{equation}
for $i=1,\ldots,g, j=1,\ldots,h, k=1,\ldots,m$, where $\xi_0$ is the  intercept, $\bxi_1$ is the $p_a$-vector of 
row slopes , $\bxi_2$  is the $p_b$-vector of column slopes, $\bxi_3$  is the $p_{ab}$-vector of interaction slopes, and $\bxi_4$  is the $p_w$-vector of within-cell slopes.
We denote a general parameter vector for the model (\ref{two-way crossed model-rw }) by $\bomega=[\xi_0,\bxi_1^T,\sigma_\alpha^2, \bxi_2^T,\sigma_\beta^2,\bxi_3^T,\sigma_\gamma^2, \bxi_4^T,\sigma_e^2]^T$ and the true parameter vector by  $\dot\bomega=[\dot\xi_0,\dot\bxi_1^T,\dot\sigma_\alpha^2, \dot\bxi_2^T,\dot\sigma_\beta^2,\dot\bxi_3^T,\dot\sigma_\gamma^2, \dot\bxi_4^T,\dot\sigma_e^2]^T$.  All expectations are taken under the true model with the covariates treated as fixed (i.e.~we condition on any random covariates).
To express model (\ref{two-way crossed model-rw }) in matrix form like (\ref{two-way matrix}),
let $\bxi=[\xi_0,\bxi_1^T,\bxi_2^T,\bxi_3^T,\bxi_4^T]^T$, 
$\bXa=[\bx_{1}^{(a)},\ldots,\bx_{g}^{(a)}]^T$,  $\bXb=[\bx_{1}^{(b)},\ldots,\bx_{h}^{(b)}]^T$,  $\bXab=[\bx_{11}^{(ab)},\ldots,\bx_{gh}^{(ab)}]^T$ and $\bXw=[\bx_{111}^{(w)},\ldots,\bx_{ghm}^{(w)}]^T$. Then the matrix form of  (\ref{two-way crossed model-rw }) is  
\begin{equation}\label{two-way matrix-rw}
	\by=\bX\bxi+\bZ_1\balpha+\bZ_2\bbeta+\bZ_3\bgamma+\bZ_0\be,
\end{equation}
where
$\bX=[\bone_{n},\bXa\otimes\bone_h\otimes\bone_m, \bone_g\otimes\bXb\otimes\bone_m,\bXab\otimes\bone_m,\bXw]$.

\newcommand{\cxia}{\bx_{i(c)}^{(a)}}
\newcommand{\cxiat}{\bx_{i(c)}^{(a)T}}
\newcommand{\cxiab}{\bar{\bx}_{i.(c)}^{(ab)}}
\newcommand{\cxiabt}{\bar{\bx}_{i.(c)}^{(ab)T}}
\newcommand{\cxiw}{\bar{\bx}_{i.(c)}^{(w)}}
\newcommand{\cxiwt}{\bar{\bx}_{i.(c)}^{(w)T}}
\newcommand{\cyia}{\bar{y}_{i.(c)}}

\newcommand{\cxjb}{\bx_{j(c)}^{(b)}}
\newcommand{\cxjbt}{\bx_{j(c)}^{(b)T}}
\newcommand{\cxjab}{\bar{\bx}_{.j(c)}^{(ab)}}
\newcommand{\cxjabt}{\bar{\bx}_{.j(c)}^{(ab)T}}
\newcommand{\cxjw}{\bar{\bx}_{.j(c)}^{(w)}}
\newcommand{\cxjwt}{\bar{\bx}_{.j(c)}^{(w)T}}
\newcommand{\cyjb}{\bar{y}_{.j(c)}}

\newcommand{\cxijab}{\bx_{ij(c)}^{(ab)}}
\newcommand{\cxijabt}{\bx_{ij(c)}^{(ab)T}}
\newcommand{\cxijw}{\bar{\bx}_{ij(c)}^{(w)}}
\newcommand{\cxijwt}{\bar{\bx}_{ij(c)}^{(w)T}}
\newcommand{\cyijab}{\bar{y}_{ij(c)}}

\newcommand{\cxijkw}{\bx_{ijk(c)}^{(w)}}
\newcommand{\cxijkwt}{\bx_{ijk(c)}^{(w)T}}
\newcommand{\cyijkw}{y_{ijk(c)}}

To simplify notation, we let $\eta=g/h$ and $\tau=\sigma_{\alpha}^2+\eta\sigma_{\beta}^2$ with
true value  $\dot\tau=\dot\sigma_{\alpha}^2+\eta\dot\sigma_{\beta}^2$.
We define the means of $y_{ijk}$ (and later $e_{ijk}$) analogously to the means of $x_{ijk}$.
For variables that are centered, we use the additional subscript $(c)$. We use superscripts $(a)$, $(b)$, $(ab)$, and $(w)$ to indicate that a variable is a row, column, interaction, or within-cell variable, respectively. 
For example, 
$ \mathbf{x}_{i(c)}^{(a)} = \mathbf{x}_{i}^{(a)}-\bar{\mathbf{x}}^{(a)}$, 
$ \bar{\mathbf{x}}_{i.(c)}^{(ab)} = \bar{\mathbf{x}}_{i.}^{(ab)}-\bar{\mathbf{x}}^{(ab)}$,  
$ \bar{\mathbf{x}}_{i.(c)}^{(w)} = \bar{\mathbf{x}}_{i.}^{(w)}-\bar{\mathbf{x}}^{(w)}$, 
and similarly for terms such as 
$ \mathbf{x}_{j(c)}^{(b)}$, 
$ \bar{\mathbf{x}}_{.j(c)}^{(ab)}$, 
$ \bar{\mathbf{x}}_{.j(c)}^{(w)}$, 
$ \bar{\mathbf{x}}_{ij(c)}^{(ab)}$, 
$ \bar{\mathbf{x}}_{ij(c)}^{(w)}$, 
and 
$ \mathbf{x}_{ijk(c)}^{(w)}$.
We also define sums of squares for certain variables. 
Let $\bX_{i(c)}^{(a)}=[\cxiat,\cxiabt,\cxiwt]^T$,  $\bX_{j(c)}^{(b)}=[\cxjbt,\cxjabt,\cxjwt]^T$ and  $\bX_{ij(c)}^{(ab)}=[\cxijabt,\cxijwt]^T$. Then we write
\begin{align*}
	&\sumig\bX_{i(c)}^{(a)}\bX_{i(c)}^{(a)T}=\begin{bmatrix}
		SA_{(a)}^{x}& 	SA_{(a),(ab)}^{x}& 			SA_{(a),(w)}^{x}\\
		SA_{(ab),(a)}^{x}& 	SA_{(ab)}^{x}& 			SA_{(ab),(w)}^{x}\\
		SA_{(w),(a)}^{x}& 	SA_{(w),(ab)}^{x}& 	 SA_{(w)}^{x}		
	\end{bmatrix},\\&
 		\sumjh\bX_{j(c)}^{(b)}\bX_{j(c)}^{(b)T}=\begin{bmatrix}
		SB_{(b)}^{x}& 	SB_{(b),(ab)}^{x}& 			SB_{(b),(w)}^{x}\\
		SB_{(ab),(b)}^{x}& 	SB_{(ab)}^{x}& 			SB_{(ab),(w)}^{x}\\
		SB_{(w),(b)}^{x}& 	SB_{(w),(ab)}^{x}& 	 SB_{(w)}^{x}		
	\end{bmatrix},\\
 &
	\sumig\sumjh \bX_{ij(c)}^{(ab)}\bX_{ij(c)}^{(ab)T}=\begin{bmatrix}
		SAB_{(ab)}^{x}& SAB_{(ab),(w)}^{x}\\
		SAB_{(w),(ab)}^{x}& SAB_{(w)}^{x}
	\end{bmatrix},\\ &
 \quad\text{and}
\qquad
	\sumig\sumjh\sumkm\cxijkw\cxijkwt=SW_{(w)}^{x}.
\end{align*}
We define the block matrices $\hat\bD_{1}=g^{-1}SA_{(a)}^{x}$, $\hat\bD_2=h^{-1}SB_{(b)}^{x}$, $\hat\bD_3= (gh)^{-1}SAB_{(ab)}^{x}$ and $\hat\bD_4=n^{-1}SW_{(w)}^{x}$,  and their respective limits  $\bD_{1}=\lim_{g\to\infty}\hat\bD_{1}$, $\bD_2=\lim_{h\to\infty}\hat\bD_{2}$, $\bD_3=\lim_{g,h\to\infty}\hat\bD_{3}$ and $\bD_4=\lim_{g,h,m\to\infty}\hat\bD_{4}$.
For a comprehensive list of all notation, refer to Appendix~\ref{appA}.

	%
	%
	%





	The log-likelihood for the parameters in model (\ref {two-way matrix-rw}) is
	\begin{equation}\label{lglik}
		l(\bomega;\by)=-\frac{n}{2}\log 2\pi -\frac{1}{2}\log|\bV|-\frac{1}{2}(\by-\bX\bxi)^T\bV^{-1}(\by-\bX\bxi),
	\end{equation}
	where $|\bV|$ is the determinant of $\bV$.
	To write down the inverse of $\bV$, we define $\bJ_a^0=\bI_a$ and $\bJ_a^1=\bJ_a$, so the dispersion matrix (\ref{Vmatrix}) is written as 
	\begin{equation}\label{Vmatrix binary}
		\bV=\sigma_\alpha^2(\bJ_g^0\otimes\bJ_h^1\otimes\bJ_m^1)+\sigma_\beta^2(\bJ_g^1\otimes\bJ_h^0\otimes\bJ_m^1)+\sigma_\gamma^2(\bJ_g^0\otimes\bJ_h^0\otimes\bJ_m^1)+\sigma_e^2(\bJ_g^0\otimes\bJ_h^0\otimes\bJ_m^0).
	\end{equation}
	Following  \citep{searle1979dispersion}, we obtain the explicit expression
	\begin{align*}
		\bV^{-1}&=
		\frac{1}{\lambda_0}(\bI_g\otimes\bI_h\otimes\bC_m)+\frac{1}{\lambda_1}(\bC_g\otimes\bC_h\otimes\bar{\bJ}_m)+\frac{1}{\lambda_2}(\bC_g\otimes\bar{\bJ}_h\otimes\bar{\bJ}_m)\\&
		\qquad+\frac{1}{\lambda_3}(\bar{\bJ}_g\otimes\bC_h\otimes\bar{\bJ}_m)+\frac{1}{\lambda_4}(\bar{\bJ}_g\otimes\bar{\bJ}_h\otimes\bar{\bJ}_m),
	\end{align*}
	where 
$\lambda_0=\sigma_e^2$, $\lambda_1=\sigma_e^2+m\sigma_\gamma^2$,
$\lambda_2=\sigma_e^2+m\sigma_\gamma^2+hm\sigma_\alpha^2$,
$\lambda_3=\sigma_e^2+m\sigma_\gamma^2+gm\sigma_\beta^2$, 
$\lambda_4=\sigma_e^2+m\sigma_\gamma^2+hm\sigma_\alpha^2+gm\sigma_\beta^2$, $\bar{\bJ}_a=\frac{1}{a}{\bJ}_a$ and $\bC_a=\bI_a-\bar\bJ_a$. The details are given in Appendix~\ref{AppendixB}.

To find the maximum likelihood estimator $\hbomega$ of $\bomega$, we differentiate (\ref{lglik}) with respect to $\bomega$ to obtain the estimating function $\bpsi(\bomega)$ and then solve the estimating equation $\bzero_{[(p+5):1]}=\bpsi(\bomega)$, where $\bzero_{[p:q]}$ is a $p\times q$ matrix of zeros.  The derivation of the estimating function $\boldsymbol{\psi}(\boldsymbol{\omega})$ is given in Supplementary Section S.2.
Henceforth, to be concise and maintain clarity, the argument $\bomega$ will be excluded from expressions when  doing so does not affect the intended meaning or introduce ambiguity.
We define  $\bar{R}=\yddd-\xi_0-\bar{\bx}^{(a)T}\bxi_1-\bar{\bx}^{(b)T}\bxi_2-\bar{\bx}^{(ab)T}\bxi_3-\bar{\bx}^{(w)T}\bxi_4$,  $\barRia=\cyia-\cxiat\bxi_1-\cxiabt\bxi_3-\cxiwt\bxi_4$, $\barRjb=\cyjb-\cxjbt\bxi_2-\cxjabt\bxi_3-\cxjwt\bxi_4$, $\barRijab=\cyijab-\cxijabt\bxi_3-\cxijwt\bxi_4$ and $\barRijkw=\cyijkw-\cxijkwt\bxi_4$.
Then the components of $\bpsi(\bomega)$ are
	\begin{equation}\label{mlequation}
		\begin{split}
			l_{\xi_0}&=\frac{n}{\lambda_4}\bar{R},
			\qquad
			l_{\bxi_1}=\frac{hm}{\lambda_2}\sumig\cxia\barRia+\frac{n}{\lambda_4}\bar{\bx}^{(a)}\bar{R},\\
			l_{\sigma_\alpha^2}&=\frac{h^2m^2}{2\lambda_2^2}\sumig\barRia^2-\frac{(g-1)hm}{2\lambda_2}+\frac{gh^2m^2}{2\lambda_4^2}\bar{R}^2-\frac{hm}{2\lambda_4},\qquad
			l_{\bxi_2}=\frac{gm}{\lambda_3}\sumjh\cxjb\barRjb+\frac{n}{\lambda_4}\bar{\bx}^{(b)}\bar{R},\\
			l_{\sigma_\beta^2}&=\frac{g^2m^2}{2\lambda_3^2}\sumjh\barRjb^2-\frac{(h-1)gm}{2\lambda_3}+\frac{g^2hm^2}{2\lambda_4^2}\bar{R}^2-\frac{gm}{2\lambda_4},\\
			l_{\bxi_3}&= \frac{m}{\lambda_1}\sumig\sumjh\cxijab\barRijab
			+\frac{hm}{\lambda_2}\sumig\cxiab\barRia
			+\frac{gm}{\lambda_3}\sumjh\cxjab\barRjb
			+\frac{n}{\lambda_4}\barbxab\bar{R},\\
			l_{\sigma_\gamma^2}&=\frac{m^2}{2\lambda_1^2}\sumig\sumjh\barRijab^2-\frac{(g-1)(h-1)m}{2\lambda_1}+\frac{hm^2}{2\lambda_2^2}\sumig\barRia^2 -\frac{(g-1)m}{2\lambda_2}+\frac{gm^2}{2\lambda_3^2}\sumjh\barRjb^2\\&\qquad
			-\frac{(h-1)m}{2\lambda_3} +\frac{ghm^2}{2\lambda_4^2}\bar{R}^2-\frac{m}{2\lambda_4},\\
			l_{\bxi_4}&=  	\frac{1}{\lambda_0}\sumig\sumjh\sumkm\cxijkw\barRijkw
			+\frac{m}{\lambda_1}\sumig\sumjh\cxijw\barRijab
			+\frac{hm}{\lambda_2}\sumig\cxiw\barRia
			+\frac{gm}{\lambda_3}\sumjh\cxjw\barRjb
			+\frac{n}{\lambda_4}\barbxw\bar{R},\\
			l_{\sigma_e^2}&=\frac{1}{2\lambda_0^2}\sumig\sumjh\sumkm\barRijkw^2
			-\frac{gh(m-1)}{2\lambda_0}	+\frac{m}{2\lambda_1^2}\sumig\sumjh\barRijab^2-\frac{(g-1)(h-1)}{2\lambda_1}
		\\&\qquad
			+\frac{hm}{2\lambda_2^2}\sumig\barRia^2	-\frac{g-1}{2\lambda_2}
			+\frac{gm}{2\lambda_3^2}\sumjh\barRjb^2-\frac{h-1}{2\lambda_3}
			+\frac{n}{2\lambda_4^2}\bar{R}^2-\frac{1}{2\lambda_4}.
		\end{split}
	\end{equation}

	We write $\bpsi(\bomega)=[	\bpsi^{(a)}(\bomega)^T, \bpsi^{(b)}(\bomega)^T, \bpsi^{(ab)}(\bomega)^T, \bpsi^{(w)}(\bomega)]^T$,  
	where $\bpsi^{(a)}(\bomega)= [ l_{\xi_0}, l_{\bxi_1}^T, l_{\sigma_\alpha^2}]^T$ are the estimating functions for the row parameters, 
	$	\bpsi^{(b)}(\bomega)=[l_{\bxi_2}^T, l_{\sigma_\beta^2}]^T$ are the estimating functions for the column  parameters, 
	$ \bpsi^{(ab)}(\bomega)=[l_{\bxi_3}^T, l_{\sigma_\gamma^2} ]^T$ are the estimating functions for the interaction parameters
	and 
	$ 	\bpsi^{(w)}(\bomega)= [l_{\bxi_4}^T, l_{\sigma_e^2}]^T$ are the estimating functions for the within-cell parameters. The derivative of the estimating function,  $\triangledown \bpsi (\bomega)$, and its expected value under the model is given in Supplementary Section S.5.

	Throughout the paper, we let $|\ba|=(\ba^T\ba)^{1/2}$ and $\|\bA\|=\{\text{trace}(\bA\bA^T)\}^{1/2}$ denote the Euclidean (Frobenius) norm of the vector $\ba$ and the matrix $\bA$, respectively.

	\section{Asymptotic Normality Theorem}\label{sec:3}
	To approximate the estimating function and derive the asymptotic properties of $\hat\bomega$ from the estimating equation, we impose the following condition.

 \medskip
	\noindent
	\textbf{Condition A}
	\begin{enumerate}
		\item  The model (\ref{two-way crossed model-rw })  holds with true parameter $\dot\bomega$ inside the parameter space $\bOmega$.
		\item The number of factor A levels (rows) $g\to\infty$, the number of factor B levels (columns) $h\to\infty$ and the number of observations within each cell  $m\to\infty$.
		\item The random variables $\alpha_i$, $\beta_j$, $\gamma_{ij}$ and $e_{ijk}$ are independent and identically distributed and mutually independent.  Moreover, there is a $\delta>0$, such that $\E |\alpha_1|^{4+\delta}<\infty$, $\E |\beta_1|^{4+\delta}<\infty$, $\E |\gamma_{11}|^{4+\delta}<\infty$ and $\E| e_{111}|^{4+\delta}<\infty$.
		\item The limits $\lim_{g\to\infty} g^{-1} \sumig \bxai = \barbxa$ and $\lim_{h\to\infty} \sumjh \bxbj = \barbxb$ exist. Also, the limits of the matrices $\lim_{g\to\infty}g^{-1}\sumig\bX_{i(c)}^{(a)}\bX_{i(c)}^{(a)T}$, \\
  $\lim_{h\to\infty}h^{-1}\sumjh\bX_{j(c)}^{(b)}\bX_{j(c)}^{(b)T}$, $\lim_{g,h\to\infty} (gh)^{-1}\sumig\sumjh\bX_{ij(c)}^{(ab)}\bX_{ij(c)}^{(ab)T}$ and \\
  $\lim_{g,h,m\to\infty} n^{-1}\sumig\sumjh\sumkm\cxijkw\cxijkwt$ exist and are positive definite. Additionally,  there is a $\delta>0$, such that  $\lim_{g\to\infty}g^{-1}\sumig|\cxia|^{2+\delta}<\infty$, \\
  $\lim_{h\to\infty}h^{-1}\sumjh |\cxjb|^{2+\delta}<\infty$, $\lim_{g,h\to\infty}(gh)^{-1}\sumig\sumjh|\cxijab|^{2+\delta}<\infty$ and $ \lim_{g,h,m\to\infty}n^{-1}\sumig\sumjh\sumkm|\cxijkw|^{2+\delta}<\infty$.
	\end{enumerate}
	As noted in the Introduction, these are mild conditions.  
	Condition A2 allows $g, h, m$ to diverge to infinity without imposing constraints on the relative rates. 
	Conditions A3 and A4 ensure that limits needed for the existence of the asymptotic variance of the estimating function exist and that we can establish a Lyapunov condition and hence a central limit theorem for the estimating function. They also ensure that the negative of the appropriately normalized second derivative of the estimating function converges to the matrix $\bB$ given in (\ref{MatrixB}). 
	
	Our main result is the following theorem which we prove in Section~\ref{Proof}.
	\begin{theoremm}\label{theorem1}
		Suppose Condition A holds. 	
		When  $0\le \eta<\infty$, there is a solution $\hat\bomega$ to the estimating equations
		$\bpsi(\bomega)=\bzero_{[(p+5):1]}$, satisfying $|\bK^{1/2}(\hat\bomega-\dot\bomega)|=O_p(1)$, where $\bK=\diag(g\bI_{p_a+2},h\bI_{p_b+1},gh\bI_{p_{ab}+1},n\bI_{p_w+1})$ where $\bI_p$ is the $p\times p$ identity matrix. Moreover, $\hat\bomega$ has the asymptotic representation
		\begin{equation}
			\bK^{1/2}(\hat\bomega-\dot\bomega)=\bB^{-1}\bK^{-1/2}\bphi+o_p(1),
		\end{equation}
		where $\bB$ is given by (\ref{MatrixB}), and $\bphi=[\phi_{\xi_0},\bphi_{\bxi_1}^T,\phi_{\sigma_{\alpha}^2},\bphi_{\bxi_2}^T,\phi_{\sigma_{\beta}^2},\bphi_{\bxi_3}^T,\phi_{\sigma_{\gamma}^2},\bphi_{\bxi_4}^T,\phi_{\sigma_{e}^2}]^T$ has components
		\begin{gather}
			\begin{aligned}\label{eq:approx}
				\phi_{\xi_0}&=\frac{1}{\dot{\tau}}(\sumig \alpha_i+\eta\sumjh\beta_j),
				&\bphi_{\bxi_1}&=\frac{1}{\dot{\sigma}_\alpha^2}\sumig (\bxai-\frac{\eta \dot{\sigma}_\beta^2}{\dot{\tau}}\barbxa)\alpha_i+\frac{\eta}{\dot{\tau}}\barbxa\sumjh\beta_j,\\
				\phi_{\sigma_\alpha^2}&=\frac{1}{2\dot{\sigma}_\alpha^4}\sumig(\alpha_i^2-\dot{\sigma}_\alpha^2),
				&\bphi_{\bxi_2}&= \frac{1}{\dot{\sigma}_{\beta}^2}\sumjh(\bxbj-\frac{\dot{\sigma}_{\alpha}^2}{\dot{\tau}}\barbxb) \beta_j+\frac{1}{\dot{\tau}}\barbxb\sumig\alpha_i,\\
				\phi_{\sigma_\beta^2}&=\frac{1}{2\dot{\sigma}_\beta^4}\sumjh(\beta_j^2-\dot{\sigma}_\beta^2),
				&\bphi_{\bxi_3}&=\frac{1}{\dot{\sigma}_{\gamma}^2}\sumig\sumjh\cxijab\gamma_{ij},
				\qquad \phi_{\sigma_\gamma^2}=\frac{1}{2\dot{\sigma}_{\gamma}^4}\sumig\sumjh(\gamma_{ij}^2-\dot{\sigma}_\gamma^2),\\
				\bphi_{\bxi_4}&=\frac{1}{\dot{\sigma}_{e}^2}\sumig\sumjh\sumkm\cxijkw e_{ijk},
				&\phi_{\sigma_e^2}&=\frac{1}{2\dot{\sigma}_e^4}\sumig\sumjh\sumkm(e_{ijk}^2-\dot{\sigma}_{e}^2).		
			\end{aligned}
		\end{gather}
		It follows that 
		\begin{displaymath}
			\bK^{1/2}(\hat\bomega-\dot\bomega) \xrightarrow{D}  N(\bzero, \bF),
			\end{displaymath}	 
			where $\bF=\diag\left(\begin{bmatrix}
				\bF_{1(a),(a)}&\bF_{1(a),(b)}\\	\bF_{1(b),(a)}&\bF_{1(b),(b)}
			\end{bmatrix},\bF_2,\bF_3\right)$, 
			with 	 		
			\begin{align*}
				&\bF_{1(a),(a)}=\begin{bmatrix}
					f_1
					&-\dsiga^2\bbf_2&\E\alpha_1^3\\
					-\dsiga^2\bbf_2^T&\dsiga^2\bD_{1}^{-1}&\bzero_{[p_a:1]}\\
					\E\alpha_1^3&\bzero_{[1:p_a]}&\E\alpha_1^4-\dsiga^4
				\end{bmatrix},\quad
				\bF_{1(a),(b)}=	\bF_{1(b),(a)}^T=
				\begin{bmatrix}
					-\eta^{1/2}\dsigb^2\bbf_3&\eta^{1/2}\E\beta_1^3\\
					\bzero_{[p_a:p_b]}&\bzero_{[p_b:1]}\\
					\bzero_{[1:p_b]}&0
				\end{bmatrix},
				\\&	
				\bF_{1(b),(b)}=\begin{bmatrix}
					\dsigb^2\bD_{2}^{-1}&\bzero_{[p_b:1]}\\
					\bzero_{[1:p_b]}&\E\beta_1^4-\dsigb^4
				\end{bmatrix},\,\,\,
				\bF_2=\begin{bmatrix}
					{\dot{\sigma}_\gamma^2}{\bD_3^{-1}}&\bzero_{[p_{ab}:1]}\\
					\bzero_{[1:p_{ab}]}&{\E\gamma_{11}^4-\dot{\sigma}_\gamma^4}
				\end{bmatrix}
				\,\,\text{and}\,\,
				\bF_3=\begin{bmatrix}
					{\dot{\sigma}_e^2}{\bD_4^{-1}}&\bzero_{[p_w:1]}\\
					\bzero_{[1:p_w]}&{\E e_{111}^4-\dot{\sigma}_e^4} 
				\end{bmatrix},
			\end{align*}
			$f_1=	\dot\tau+\dsiga^2\barbxat\bD_{1}^{-1}\barbxa+\eta\dsigb^2\barbxbt\bD_{2}^{-1}\barbxb$,  $\bbf_2=\barbxat\bD_{1}^{-1}$ and  $\bbf_3=\barbxbt\bD_{2}^{-1}$.
   The result for $\eta = \infty$ can be obtained from that for $\eta=0$ by reordering the parameter vector as $\bomega = [\bxi_1^T,\sigma^2_a,\xi_0,\bxi_2^T,\sigma^2_\beta,\bxi_3^T,\sigma^2_{\gamma},\bxi_4^T,\sigma^2_e]^T$ and making the same replacements in the asymptotic coavriate matrix.
		\end{theoremm}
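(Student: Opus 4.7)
The plan is to apply the standard M-estimator Taylor-expansion strategy while carefully accounting for the four different rates of convergence captured by $\bK$. Writing
\begin{equation*}
\bzero = \bpsi(\hat\bomega) = \bpsi(\dot\bomega) + \triangledown \bpsi(\dot\bomega)(\hat\bomega - \dot\bomega) + \bR,
\end{equation*}
with $\bR$ a quadratic remainder, and sandwiching the Hessian with $\bK^{1/2}$ gives
\begin{equation*}
\bK^{1/2}(\hat\bomega - \dot\bomega) = -\bigl[\bK^{-1/2}\triangledown \bpsi(\dot\bomega)\bK^{-1/2}\bigr]^{-1}\bK^{-1/2}\bpsi(\dot\bomega) + o_p(1),
\end{equation*}
so the result reduces to three tasks: identifying a leading term $\bphi$ for $\bpsi(\dot\bomega)$ under the $\bK^{-1/2}$ normalization, showing $-\bK^{-1/2}\triangledown\bpsi(\dot\bomega)\bK^{-1/2} \xrightarrow{P} \bB$, and proving a joint CLT for $\bK^{-1/2}\bphi$.

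For the first task, I would substitute the true parameter values into (\ref{mlequation}). At the truth, direct computation gives $\barRia = (\alpha_i - \bar\alpha) + (\bar\gamma_{i.} - \bar\gamma) + (\bar e_{i..} - \bar e)$, and analogous expressions hold for $\barRjb$, $\barRijab$, $\barRijkw$ and $\bar R$. Combining these with the asymptotic scalings $\dot\lambda_0 = \dot\sigma_e^2$, $\dot\lambda_1 \asymp m\dot\sigma_\gamma^2$, $\dot\lambda_2 \asymp hm\dot\sigma_\alpha^2$, $\dot\lambda_3 \asymp gm\dot\sigma_\beta^2$ and $\dot\lambda_4 \asymp hm\dot\tau$, keeping only the dominant contributions, and using the identity $\dot\tau - \eta\dot\sigma_\beta^2 = \dot\sigma_\alpha^2$ yields $\bK^{-1/2}\bpsi(\dot\bomega) = \bK^{-1/2}\bphi + o_p(1)$ with $\bphi$ as in (\ref{eq:approx}). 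For the second task, the second derivatives of the log-likelihood decompose into deterministic covariate averages (converging to $\bD_1,\bD_2,\bD_3,\bD_4$ by Condition A4) plus mean-zero stochastic remainders whose variances vanish after the $\bK^{-1/2}$ normalization under Condition A3, so a term-by-term weak law of large numbers delivers $\bB$; invertibility of $\bB$ follows from positive-definiteness of the $\bD_i$ together with positivity of the variance components.

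The main obstacle is the third task, the joint CLT for $\bK^{-1/2}\bphi$, because the four independent noise sources appear with different multipliers across the parameter blocks and the normalization mixes $g^{-1/2}$, $h^{-1/2}$, $(gh)^{-1/2}$ and $n^{-1/2}$ scalings. I would use the Cram\'er--Wold device and verify a Lyapunov condition for arbitrary linear combinations using the $4+\delta$ moment bounds on the random effects and errors from A3 together with the $2+\delta$ bounds on the centred covariates from A4. The off-diagonal block $\bF_{1(a),(b)}$ and its factor $\eta^{1/2}$ arise from the shared presence of $\sum_j\beta_j$ in $\phi_{\xi_0}$ and $\bphi_{\bxi_1}$ (with weight $\eta/\dot\tau$) and of the individual $\beta_j$'s in $\bphi_{\bxi_2}$, combined with the asymmetric normalization. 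Finally, for existence of a solution I would use a standard Brouwer fixed-point argument on a ball of radius $c|\bK^{-1/2}|$ around $\dot\bomega$, exploiting (i) $\bK^{-1/2}\bpsi(\dot\bomega) = O_p(1)$, (ii) convergence of the normalized Hessian to $-\bB$, and (iii) a uniform bound on the normalized third derivative of the log-likelihood over the ball that follows from direct computation and Condition A. The case $\eta = \infty$ reduces to the case $\eta = 0$ by the symmetry substitution described at the end of the statement.
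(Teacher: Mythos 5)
Your proposal follows the same skeleton as the paper's proof: linearize the estimating equation with the rate matrix $\bK$ sandwiched around the Hessian, identify $\bphi$ as the leading term of $\bpsi(\dot\bomega)$ (the paper's Lemma~\ref{lema1} does exactly your ``keep the dominant contributions'' computation and controls the discrepancy by Chebyshev), prove the CLT for $\bK^{-1/2}\bphi$ by Cram\'er--Wold plus Lyapunov using the $4+\delta$ and $2+\delta$ moment conditions (Lemma~\ref{lema3}), and obtain existence of a root from negativity of $(\bomega-\dot\bomega)^T\bpsi(\bomega)$ on the boundary of the shrinking ball, which is the same Brouwer-type argument the paper invokes via Ortega--Rheinboldt. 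Your identification of the source of the $\eta^{1/2}$ factor in $\bF_{1(a),(b)}$ is also the right mechanism. The one place you genuinely diverge is the control of the remainder: you propose a uniform bound on the normalized \emph{third} derivative of the log-likelihood over the ball, whereas the paper never touches third derivatives. Instead it splits the remainder into a deterministic part $T_2(\bomega)=\bK^{-1/2}\E\{\bpsi(\bomega)-\bpsi(\dot\bomega)\}+\bB\bK^{1/2}(\bomega-\dot\bomega)$, handled by a mean-value argument on the expected derivative (Lemmas~\ref{lma Bn} and~\ref{lma5}), and a stochastic part $T_3(\bomega)$, handled by the Bickel-style chaining device: the ball is covered by $N=O(g^{1/4})$ small cubes, Chebyshev plus the uniform variance bounds of Lemma~\ref{lma3} control the centers, and a first-order Taylor step plus Lemma~\ref{lma6} controls the oscillation within each cube. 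That route needs only variance bounds on $\bpsi(\bomega)-\bpsi(\dot\bomega)$ and on $\triangledown\bpsi-\E\triangledown\bpsi$, which is why the moment assumptions in Condition A suffice as stated. Your third-derivative route is plausible in principle, but it is precisely the step you assert without verification, and it is where the real work lies: with four distinct rates the required bound is $\sup_{\bomega\in\boldsymbol{N}}\|K_{ii}^{-1/2}\bK^{-1/2}\triangledown^2\psi_i(\bomega)\bK^{-1/2}\|=o_p(1)$ for every component $i$, and the cross-block entries (e.g.\ mixed derivatives in $\xi_0$, $\sigma_\beta^2$ and $\sigma_\gamma^2$, where the normalizations $g^{-1/2}$, $h^{-1/2}$ and $(gh)^{-1/2}$ interact with the $\lambda_j^{-k}$ factors) do not follow from a single generic computation. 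As written, that sentence is the gap between your sketch and a complete proof; either carry out those bounds or adopt the paper's chaining decomposition, which avoids them.
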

		\begin{corollary}\label{corollary}
			Suppose Condition A holds. 	If  $0\le \eta<\infty$, then we can estimate $\bF$ consistently using
			$\hat\bF=\diag\left(\begin{bmatrix}
				\hat\bF_{1(a),(a)}&\hat\bF_{1(a),(b)}\\	\hat\bF_{1(b),(a)}&\hat\bF_{1(b),(b)}
			\end{bmatrix},\hat\bF_2,\hat\bF_3\right)$, 
				where
				\begin{align*}
					&\hat\bF_{1(a),(a)}=\begin{bmatrix}
						\hat{f}_1
						&-\hsiga^2\hat{\bbf}_2&\hat\mu_{3\alpha}\\
						-\hsiga^2\hat{\bbf}_2^T&\hsiga^2\hat\bD_{1}^{-1}&\bzero_{[p_a:1]}\\
						\hat\mu_{3\alpha}&\bzero_{[1:p_a]}&\hat\mu_{4\alpha}-\hsiga^4
					\end{bmatrix},\qquad
					\hat\bF_{1(a),(b)}=	\hat\bF_{1(b),(a)}^T=
					\begin{bmatrix}
						-\eta^{1/2}\hsigb^2\hat{\bbf}_3&\eta^{1/2}\hat\mu_{3\beta}\\
						\bzero_{[p_a:p_b]}&\bzero_{[p_b:1]}\\
						\bzero_{[1:p_b]}&0
					\end{bmatrix},\\&
					\hat\bF_{1(b),(b)}=\begin{bmatrix}
						\hsigb^2\hat\bD_{2}^{-1}&\bzero_{[p_b:1]}\\
						\bzero_{[1:p_b]}&\hat\mu_{4\beta}-\hsigb^4
					\end{bmatrix},
					\,\,\,
					\hat\bF_2=\begin{bmatrix}
						{\hat{\sigma}_\gamma^2}{\hat\bD_3^{-1}}&\bzero_{[p_{ab}:1]}\\
						\bzero_{[1:p_{ab}]}&\hat\mu_{4\gamma}-\hat{\sigma}_\gamma^4
					\end{bmatrix}
					\,\,\text{and}\,\,
					\hat\bF_3=\begin{bmatrix}
						{\hat{\sigma}_e^2}{\hat\bD_4^{-1}}&\bzero_{[p_w:1]}\\
						\bzero_{[1:p_w]}&{\mu_{4e}-\hat{\sigma}_e^4} 
					\end{bmatrix},
				\end{align*} 
				with $\hat{f}_1=\hat\tau+\hsiga^2\barbxat\hat\bD_{1}^{-1}\barbxa+\eta\hsigb^2\barbxbt\hat\bD_{2}^{-1}\barbxb$, $\hat{\bbf}_2=\barbxat\hat\bD_{1}^{-1}$, $\hat{\bbf}_3=\barbxbt\hat\bD_{2}^{-1}$,
				$\hat\mu_{k\alpha}=g^{-1}\sumig(\bar{r}_{i.}-\bar{r})^k$, 
				$\hat\mu_{k\beta}=h^{-1}\sumjh(\bar{r}_{.j}-\bar{r})^k$, for $k=3,4$,
				$\hat{\mu}_{4\gamma}= (gh)^{-1}\sumig\sumjh (\bar{r}_{ij}-\bar{r}_{i.}-\bar{r}_{.j}+\bar{r})^4$,
				and  $\hat\mu_{4e}=n^{-1}\sumig\sumjh\sumkm(r_{ijk}-\bar{r}_{ij})^k$, 
				using $r_{ijk}=y_{ijk}-\hat\xi_0-\bxait\hat\bxi_1-\bxbjt\hat\bxi_2-\bxabijt\hat\bxi_3-\bxwijkt\hat\bxi_4$,
				$ \bar{r}_{ij} = m^{-1}\sum_{k=1}^{m} y_{ijk}$, 
				$ \bar{r}_{i.} = (hm)^{-1} \sum_{j=1}^{h}\sum_{k=1}^{m} y_{ijk}$,  
				$ \bar{r}_{.j} = (gm)^{-1} \sum_{i=1}^{g}\sum_{k=1}^{m} y_{ijk}$, 
				and
				$ \bar{r} = n^{-1} \sum_{i=1}^{g}\sum_{j=1}^{h}\sum_{k=1}^{m} y_{ijk}$,
				for $i=1,\ldots,g$, $j=1,\ldots,h$, $k=1\ldots,m$.
    The result for $\eta = \infty$ can be obtained from that for $\eta=0$ by using the same reordering of the parameter vector as in Theorem \ref{theorem1}.
			\end{corollary}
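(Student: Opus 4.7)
The plan is to apply the continuous mapping theorem to the blocks of $\hat\bF$, which are continuous functions of the parameter estimates $\hat\bomega$, the matrices $\hat\bD_1,\ldots,\hat\bD_4$, the sample means $\barbxa$ and $\barbxb$, and the residual-based moment estimators $\hat\mu_{3\alpha}$, $\hat\mu_{4\alpha}$, $\hat\mu_{3\beta}$, $\hat\mu_{4\beta}$, $\hat\mu_{4\gamma}$, $\hat\mu_{4e}$. Consistency of $\hat\bomega$ for $\dot\bomega$ is supplied directly by Theorem~\ref{theorem1}, while $\hat\bD_j \xrightarrow{P} \bD_j$ and convergence of the sample means of the row and column covariates to $\barbxa$ and $\barbxb$ are supplied by Condition A4. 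The substantive task is therefore to show that the residual-based moment estimators are consistent for $\E\alpha_1^k$, $\E\beta_1^k$, $\E\gamma_{11}^4$, $\E e_{111}^4$.

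I would begin by substituting model~(\ref{two-way crossed model-rw }) into the residual to obtain
\begin{align*}
r_{ijk} = \alpha_i + \beta_j + \gamma_{ij} + e_{ijk} + \Delta_{ijk},
\end{align*}
where $\Delta_{ijk} = (\dot\xi_0 - \hat\xi_0) + \bxait(\dot\bxi_1 - \hat\bxi_1) + \bxbjt(\dot\bxi_2 - \hat\bxi_2) + \bxabijt(\dot\bxi_3 - \hat\bxi_3) + \bxwijkt(\dot\bxi_4 - \hat\bxi_4)$. The centering operations built into each moment estimator then peel away the higher-level contributions; for example,
\begin{align*}
\bar r_{i.} - \bar r &= (\alpha_i - \bar\alpha) + (\bar\gamma_{i.} - \bar\gamma) + (\bar e_{i..} - \bar e) + (\bar\Delta_{i..} - \bar\Delta),\\
\bar r_{ij} - \bar r_{i.} - \bar r_{.j} + \bar r &= (\gamma_{ij} - \bar\gamma_{i.} - \bar\gamma_{.j} + \bar\gamma) + (\bar e_{ij.} - \bar e_{i..} - \bar e_{.j.} + \bar e) + R_{ij},\\
r_{ijk} - \bar r_{ij} &= (e_{ijk} - \bar e_{ij.}) + (\bxwijk - \barbxijw)^T(\dot\bxi_4 - \hat\bxi_4),
\end{align*}
and similarly for $\bar r_{.j} - \bar r$, where the remainder $R_{ij}$ consists of centered interaction and within-cell covariate differences multiplied by $\dot\bxi_3 - \hat\bxi_3$ and $\dot\bxi_4 - \hat\bxi_4$.

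For each moment estimator the leading contribution is a sample moment of i.i.d.\ zero-mean random variables with finite $(4+\delta)$-th moment under Condition A3, so the classical law of large numbers yields convergence to the corresponding population moment. The secondary random-effect averages that appear in the decomposition (such as $\bar\gamma_{i.}$ or $\bar e_{i..}$ in $\bar r_{i.} - \bar r$) have variances that are polynomially small in $h$, $m$, or $n$, and hence vanish in probability. The remainder terms carrying $\Delta$ or $R_{ij}$ are handled by combining $\hat\bxi - \dot\bxi \xrightarrow{P} 0$ from Theorem~\ref{theorem1} with the $(2+\delta)$-moment bounds on the centered covariates in Condition A4, via H\"older's inequality.

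The main obstacle is controlling the cross terms produced by the fourth-power expansions. Writing $(A+B)^4$ with $A$ the leading random-effect piece and $B$ the sum of the lower-order noise and the $\Delta$-remainder produces mixed sums such as $g^{-1}\sumig A_i^3 B_i$ and $g^{-1}\sumig A_i^2 B_i^2$; each of these must be shown to be $o_p(1)$, which requires combining the $O_p$-rates from Theorem~\ref{theorem1} with the $(4+\delta)$ and $(2+\delta)$ moment hypotheses of Conditions A3 and A4 through repeated applications of H\"older's and the Cauchy--Schwarz inequalities. Once these bookkeeping estimates are in place, Slutsky's theorem and the continuous mapping theorem deliver $\hat\bF \xrightarrow{P} \bF$ block by block. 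The case $\eta = \infty$ follows by the same argument after the reordering of the parameter vector indicated in Theorem~\ref{theorem1}.
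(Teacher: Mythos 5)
Your proposal is correct and follows the route the paper takes (the paper relegates the details of this corollary to its supplement, but the argument is exactly the one you describe): consistency of $\hat\bomega$ at the rates in Theorem~\ref{theorem1} plus Condition A4 handle every block except the residual-based moment estimators, and those are treated by substituting the model into $r_{ijk}$, noting that the nested centerings cancel the higher-level random effects, and controlling the cross terms in the third- and fourth-power expansions by H\"older's inequality using the $(4+\delta)$- and $(2+\delta)$-moment hypotheses. You also correctly read the displayed definitions of $\bar r_{ij}$, $\bar r_{i.}$, $\bar r_{.j}$, $\bar r$ as averages of $r_{ijk}$ rather than of $y_{ijk}$ (a typo in the statement), which is the only reading under which the corollary holds.
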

			We now make some remarks concerning Theorem 1 and Corollary 2.
			\begin{enumerate}
   \item 	The result for the case $\eta = \infty$ ($g$ tends to infinity faster than $h$) in Theorem \ref{theorem1} follows from the symmetry between the row and column parameters that allows us to swap them with each other.  The one sublety is that the intercept parameter $\xi_0$ is a row parameter when $\eta=0$ and a column parameter when $\eta=\infty$. 
				\item The case $\eta=0$ ($g$ tends to infinity slower than $h$) leads to a simplification of the above results. For example, $\dot{\tau}=\dsiga^2$, and three components of $\bphi$ can be simplified to $\phi_{\xi_0} = {1}/{\dsiga^2}\sumig \alpha_i$, $\bphi_{\bxi_1} = {1}/{\dot{\sigma}_\alpha^2}\sumig \bxai\alpha_i$ and $\bphi_{\bxi_2} = {1}/{\dot{\sigma}_{\beta}^2}\sumjh\bxbj\beta_j$.
				Additionally, the matrix $\bF$ can be simplified to $\bF=\diag\left( \bF_{1(a),(a)}, \bF_{1(b),(b)}, \bF_2, \bF_3 \right)$, where $\bF_{1(b),(b)}$, $\bF_2$ and $\bF_3$ remain unchanged, while $\bF_{1(a),(b)}=\bF_{1(b),(a)}^T=\bzero_{[(p_a+2):(p_b+1)]}$ and
				\begin{align*}
					\bF_{1(a),(a)}=\begin{bmatrix}
						\dsiga^2(1+\barbxat\bD_{1}^{-1}\barbxa)
						&-\dsiga^2\barbxat\bD_{1}^{-1}&\E\alpha_1^3\\
						-\dsiga^2\bD_{1}^{-1}\barbxa&\dsiga^2\bD_{1}^{-1}&\bzero_{[p_a:1]}\\
						\E\alpha_1^3&\bzero_{[1:p_a]}&\E\alpha_1^4-\dsiga^4
					\end{bmatrix}.
				\end{align*}
    An analogous simplification holds when $\eta=\infty$.
    
				\item The case \(0<\eta<\infty\) (i.e.~\(g\) and \(h\) tend to infinity at the same rate) is the most complex situation. The row parameters  (\( [\xi_0,\bxi_1^T,\siga^2]^T \)), are not asymptotically independent of the column parameters ( \( [\bxi_2^T,\sigb^2]^T \)). As \( g,h,m \to \infty \), we find that
				$\lim_{g,h,m\to\infty} {n}/{\dlambda_4} = {g}/{(\dsiga^2+\eta\dsigb^2)}$, where $\dlambda_4$ is the true value of $\lambda_4$.
				Consequently, the approximating functions for estimating the row parameters  \( \phi_{\xi_0} \) and \( \phi_{\bxi_1} \) contain both $\alpha_i$ (the row random effects) and \( \beta_j \) (the column random effects). Similarly, the  approximating functions for estimating the column parameter  \( \bphi_{\bxi_2} \) contain both row and column random effects.  The details are given in Supplementary Section S.3.
				In this case, we have to to consider the estimators of the row parameters and column parameters together: \( [\xi_0,\bxi_1^T,\siga^2,\bxi_2^T,\sigb^2]^T \). 

			\end{enumerate}
			
			For REML estimation, group the regression parameters as
			$\bxi=[\xi_0,\bxi_1^T,\bxi_2^T,\bxi_3^T,\bxi_4^T]^T$
			and the variance components as
			$\btheta=[\siga^2,\sigb^2,\siggama^2,\sige^2]^T$.
			The REML criterion function is obtained by replacing the regression parameters $\bxi$ in the log-likelihood (\ref{lglik}) by their maximum likelihood estimators for each $\btheta$ and then modifying the resulting profile log-likelihood for $\btheta$ to reduce the bias of the estimators. For each $\btheta$, the estimating equations for $\bxi$ presented in (\ref{mlequation}) are solved by
			$
			\hat\bxi(\btheta)=\{\bX^T\bV^{-1}\bX\}^{-1}\bX^T\bV^{-1}\by,
			$
			and the REML criterion function is
			\[
			l_R(\btheta;\by)=l(\hat\bxi(\btheta),\btheta;\by)-\frac{1}{2}\log|\bX^T\bV^{-1}\bX|.
			\]
			The REML estimator  $\hbtheta_R$ of $\dot{\btheta}$ maximizes the REML criterion function $l_R(\btheta;\by)$. We define $\hat\bxi_{R}=\hat{\bxi}(\hbtheta_R)$ to be the REML estimator of $\bxi$ and write the REML estimator of $\dot{\bomega}$ as
			$
			\hat\bomega_R=[\hat\xi_{R0},\hat\bxi_{R1}^T,\hat\sigma_{R\alpha}^2, \hat\bxi_{R2}^T,\hat\sigma_{R\beta}^2,\hat\bxi_{R3}^T,\hat\sigma_{R\gamma}^2, \hat\bxi_{R4}^T,\hat\sigma_{Re}^2]^T.
			$
			Since $\bX^T\bV^{-1}\bX$ is not a function of $\bxi$, the REML estimator also maximizes the adjusted log-likelihood
			\[
			l_A(\bxi,\btheta;\by)=l(\bomega;\by)-\frac{1}{2}\log|\bX^T\bV^{-1}\bX|.
			\]
			According to \citep{patefield1977maximized}, the REML estimator can be identified in a single step (rather than in two steps) by optimizing $l_A(\bxi,\btheta;\by)$ directly. This means that the estimating function is represented by $\bpsi_A(\bomega)$ using the derivatives $l_{A\xi_0}=l_{\xi_0}$,
			$l_{A\bxi_1}=l_{\bxi_1}$, $l_{A\bxi_2}=l_{\bxi_2}$, $l_{A\bxi_3}=l_{\bxi_3}$, $l_{A\bxi_4}=l_{\bxi_4}$, and
			\begin{align*}
				&l_{A\siga^2}(\bomega)=l_{\siga^2}(\bomega)-\frac{1}{2}\operatorname{trace}\{ (\bX^T\bV^{-1}\bX)^{-1}\bX^T\bV^{-1}\bZ_1\bZ_1^T\bV^{-1}\bX\},\\
				&l_{A\sigb^2}(\bomega)=l_{\sigb^2}(\bomega)-\frac{1}{2}\operatorname{trace}\{ (\bX^T\bV^{-1}\bX)^{-1}\bX^T\bV^{-1}\bZ_2\bZ_2^T\bV^{-1}\bX\},\\
				&l_{A\siggama^2}(\bomega)=l_{\siggama^2}(\bomega)-\frac{1}{2}\operatorname{trace}\{ (\bX^T\bV^{-1}\bX)^{-1}\bX^T\bV^{-1}\bZ_3\bZ_3^T\bV^{-1}\bX\},\\
				&l_{A\sige^2}(\bomega)=l_{\sige^2}(\bomega)-\frac{1}{2}\operatorname{trace}\{ (\bX^T\bV^{-1}\bX)^{-1}\bX^T\bV^{-1}\bZ_0\bZ_0^T\bV^{-1}\bX\}.
			\end{align*}
We show that the REML estimator is asymptotically equivalent to the maximum likelihood estimator by showing that the
contribution from the adjustment terms to the estimating function is asymptotically negligible. This yields the following
theorem which we prove in Section~\ref{Proof}.		
			\begin{theoremm}\label{theorem3}
				Suppose Condition A holds. 
    Then there is a solution $\hat{\bomega}_R$ of the REML estimating equations satisfying  $|\bK^{1/2}(\hat{\bomega}_R-\dot\bomega)|=O_p(1)$ and 
				\begin{align*}
					\bK^{1/2}(\hat{\bomega}_R-\hat\bomega)=o_p(1),
				\end{align*}
				so Theorem 1 applies to the REML estimator.
			\end{theoremm}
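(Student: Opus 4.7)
The plan is to show that the REML estimating function $\bpsi_A(\bomega)$ differs from its ML counterpart $\bpsi(\bomega)$ only by a bias-correction vector whose $\bK^{-1/2}$-rescaling vanishes uniformly on a $\bK^{1/2}$-neighborhood of $\dot\bomega$. Once this is established, the root-existence argument used to produce $\hat\bomega$ in the proof of Theorem~\ref{theorem1} carries over verbatim to $\bpsi_A$, giving a REML root at the same $O_p(1)$ rate, and subtracting the two linearized estimating equations delivers the asserted equivalence $\bK^{1/2}(\hat\bomega_R - \hat\bomega) = o_p(1)$.

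Write $\bpsi_A(\bomega) = \bpsi(\bomega) - \bdelta(\bomega)$, where the only nonzero entries of $\bdelta(\bomega)$ are the four trace terms
\[
T_s(\bomega) = \tfrac{1}{2}\operatorname{tr}\bigl\{(\bX^T\bV^{-1}\bX)^{-1}\bX^T\bV^{-1}\bZ_s\bZ_s^T\bV^{-1}\bX\bigr\}, \qquad s = 0,1,2,3,
\]
sitting in the slots for $\sige^2$, $\siga^2$, $\sigb^2$, and $\siggama^2$ respectively. The key inequality is the Loewner bound $\sigma_s^2\,\bZ_s\bZ_s^T \preceq \bV$, which follows because $\bV - \sigma_s^2\bZ_s\bZ_s^T$ is a sum of the remaining three positive-semidefinite Kronecker terms. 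Conjugating by the symmetric factor $\bV^{-1}$ gives $\bV^{-1}\bZ_s\bZ_s^T\bV^{-1} \preceq \sigma_s^{-2}\bV^{-1}$, so
\[
0 \le T_s(\bomega) \le \tfrac{1}{2\sigma_s^2}\operatorname{tr}\bigl\{(\bX^T\bV^{-1}\bX)^{-1}\bX^T\bV^{-1}\bX\bigr\} = \tfrac{p+1}{2\sigma_s^2},
\]
uniformly on compact subsets of the parameter space bounded away from the boundary. The diagonal entries of $\bK^{1/2}$ corresponding to $\sige^2$, $\siga^2$, $\sigb^2$, $\siggama^2$ are $n^{1/2}$, $g^{1/2}$, $h^{1/2}$, $(gh)^{1/2}$, all diverging under Condition A2, so $\bK^{-1/2}\bdelta(\bomega) = o(1)$ pointwise. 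Because $T_s$ depends on $\bomega$ only through $\bV$, differentiating in $\bomega$ yields expressions of the same shape (traces of products involving $(\bX^T\bV^{-1}\bX)^{-1}$ and $\bZ_u\bZ_u^T$-type matrices), so the same Loewner argument bounds its gradient by $O(1)$; a first-order Taylor expansion then upgrades the pointwise bound to uniform convergence over $\{|\bK^{1/2}(\bomega - \dot\bomega)| \le M\}$.

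The main obstacle is verifying that the Jacobian $\triangledown\bpsi_A$ matches $\triangledown\bpsi$ to leading order, so that the matrix $\bB$ in Theorem~\ref{theorem1} still governs the linearization and the Newton/Brouwer root-finding argument applies unchanged. This reduces to showing $\bK^{-1/2}\triangledown\bdelta(\bomega)\bK^{-1/2} = o_p(1)$ uniformly, which follows by one more application of the same Loewner/Taylor bounds since the nonzero block of $\triangledown\bdelta$ lies in the variance-variance sector where each entry is $O(1)$ and each $\bK^{1/2}$-factor diverges. Once this is in place, the existence of $\hat\bomega_R$ with $|\bK^{1/2}(\hat\bomega_R - \dot\bomega)| = O_p(1)$ is immediate, and subtracting the linearizations of $\bpsi(\hat\bomega) = \bzero$ and $\bpsi_A(\hat\bomega_R) = \bpsi(\hat\bomega_R) - \bdelta(\hat\bomega_R) = \bzero$ yields
\[
\bK^{1/2}(\hat\bomega_R - \hat\bomega) = \bB^{-1}\bK^{-1/2}\bdelta(\dot\bomega) + o_p(1) = o_p(1),
\]
completing the argument; the case $\eta = \infty$ then follows from $\eta = 0$ by the row-column symmetry already used in Theorem~\ref{theorem1}.
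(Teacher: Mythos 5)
Your proposal is correct and follows the same overall route as the paper: write $\bpsi_A(\bomega)=\bpsi(\bomega)-\bdelta(\bomega)$ with $\bdelta$ carrying the four trace adjustments, show that $\bK^{-1/2}\bdelta(\bomega)$ vanishes uniformly on $\boldsymbol{N}$, and then let the entire proof of Theorem 1 carry over to $\bpsi_A$. Where you genuinely add something is the bound on the trace terms: the paper simply asserts $|\operatorname{tr}\{(\bX^T\bV^{-1}\bX)^{-1}\bX^T\bV^{-1}\bZ_s\bZ_s^T\bV^{-1}\bX\}|=O(1)$ uniformly on $\boldsymbol{N}$ and defers the verification to Supplementary Section S.2.7, whereas your Loewner argument $\sigma_s^2\,\bZ_s\bZ_s^T\preceq\bV$ yields the explicit, non-asymptotic bound $0\le T_s(\bomega)\le (p+1)/(2\sigma_s^2)$ for every $\bomega$ with $\sigma_s^2>0$; this is automatically uniform on $\boldsymbol{N}$ (where the variance components converge to their positive true values), so the Taylor/gradient step you invoke to "upgrade" the bound to uniformity is not even needed. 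One further simplification: the paragraph on matching the Jacobians $\triangledown\bpsi_A$ and $\triangledown\bpsi$ is superfluous. The existence and rate argument in the proof of Theorem 1 (Result 6.3.4 of Ortega and Rheinboldt) works directly from the uniform expansion of $\bK^{-1/2}\bpsi(\bomega)$ over $\boldsymbol{N}$ and never differentiates the estimating function at the estimator; once $\sup_{\bomega\in\boldsymbol{N}}|\bK^{-1/2}\bdelta(\bomega)|=o(1)$ is established, the expansion, the existence of $\hat\bomega_R$ with $|\bK^{1/2}(\hat\bomega_R-\dot\bomega)|=O_p(1)$, and the identical asymptotic representation — hence $\bK^{1/2}(\hat\bomega_R-\hat\bomega)=o_p(1)$ — all follow with no additional work.
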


 			\section{Interpreting and Using the Results}\label{Sec:4}
			Theorems \ref{theorem1} and \ref{theorem3} establish asymptotic equivalence, asymptotic representations, and asymptotic normality for the maximum likelihood and REML estimators of the parameters in two-way crossed mixed effect  with interaction model under mild conditions when  the numbers of rows, columns, and observations within cells tend to infinity. In this section, we interpret and discuss these results.

			 We can estimate \(\dot\bomega\) consistently when both \(g\) and \(h\) approach infinity, given a bounded cell size \(m\). However, to consistently estimate the random effects \(\{\alpha_i,\beta_j,\gamma_{ij}\}\), \(m\) also needs to approach infinity \cite{jiang1998asymptotic}. Specifically:
			 
			 \begin{enumerate}
  			 	\item When \(m \to \infty\) with fixed \(g\) and \(h\), we can consistently estimate the within-cell variance \(\sigma_e^2\). However, estimates for the row variance \(\siga^2\), the  column variance \(\sigb^2\), and the interaction variance \(\sigma_{\gamma}^2\) will not be consistent.
			 	\item When both \(m\) and \(g\) approach infinity with \(h\) held fixed, we can consistently estimate the row variance \(\siga^2\), the interaction variance \(\sigma_{\gamma}^2\), and the within-cell variance \(\sigma_e^2\). However, the column variance \(\sigb^2\) will not be consistently estimated.
			 	\item When both \(m\) and \(h\) approach infinity with \(g\) held fixed, we can consistently estimate the column variance \(\sigb^2\), the interaction variance \(\sigma_{\gamma}^2\), and the within-cell variance \(\sigma_e^2\). However, the row variance \(\siga^2\) will not be consistently estimated.
			 \end{enumerate}
			 These considerations motivate requiring all $g\to\infty$, $h\to\infty$ and $m\to\infty$.
			
			The asymptotic representation shows that the influence function of  the maximum likelihood {and REML estimators} under the model is given by the summands of $\bB^{-1}\bphi$. For the case $0 \le \eta < \infty$, this function is $
			\boldsymbol{\mathcal{IF}}=[\mathcal{IF}_{\xi_0},\boldsymbol{\mathcal{IF}}_{\bxi_1}^T,\mathcal{IF}_{\siga^2},\boldsymbol{\mathcal{IF}}_{\bxi_2}^T,\mathcal{IF}_{\sigb^2},\boldsymbol{\mathcal{IF}}_{\bxi_3}^T,\mathcal{IF}_{\sigma_{\gamma}^2},\boldsymbol{\mathcal{IF}}_{\bxi_4}^T,\mathcal{IF}_{\sigma_{e}^2}]^T$,
			where
			\begin{align*}
				\mathcal{IF}_{\xi_0}&=[1+\dsigb^2/\dot\tau(\eta-\eta^{1/2})\barbxbt\bD_{2}^{-1}\barbxb]\sumig\alpha_i-\barbxat\bD_{1}^{-1}\sumig\cxia\alpha_i\\&
				\qquad+\eta[1+(\eta^{-1/2}\dsiga^2/\dot{\tau}+\eta\dsigb^2/\dot{\tau})\barbxbt\bD_{2}^{-1}\barbxb ]\sumjh\beta_j-\eta^{1/2}\barbxbt\bD_{2}^{-1}\sumjh\bxbj\beta_j,
				\\
				\boldsymbol{\mathcal{IF}}_{\bxi_1}&=\bD_{1}^{-1}\sumig\cxia\alpha_i,
				\qquad
				\mathcal{IF}_{\siga^2}=\sumig(\alpha_i^2-\dsiga^2),
				\\
				\boldsymbol{\mathcal{IF}}_{\bxi_2}&=\bD_{2}^{-1}\sumjh\bxbj\beta_j+\bD_{2}^{-1}\barbxb[\dsigb^2/\dot{\tau}(1-\eta^{1/2})\sumig\alpha_i-(\dsiga^2/\dot{\tau}+\eta^{3/2}\dsigb^2/\dot{\tau})\sumjh\beta_j],
					\\
				\mathcal{IF}_{\sigb^2}&=\sumjh(\beta_j^2-\dsigb^2),\qquad
				\boldsymbol{\mathcal{IF}}_{\bxi_3}=\bD_{3}^{-1}\sumig\sumjh\cxijab\gamma_{ij},\qquad
				\mathcal{IF}_{\sigma_{\gamma}^2}=\sumig\sumjh(\gamma_{ij}^2-\dot{\sigma}_{\gamma}^2),\\
				\boldsymbol{\mathcal{IF}}_{\bxi_4}&=\bD_4^{-1}\sumig\sumjh\sumkm\cxijkw  e_{ijk},\qquad
			\mathcal{IF}_{\sigma_{e}^2}=\sumig\sumjh\sumkm(e_{ijk}^2-\dsige^2).
			\end{align*}
			Explicitly, when $\eta=0$  at a point $[\alpha_i,\beta_j,\gamma_{ij},e_{ijk}, \bxait,\bxbjt,\bxabijt,\bxwijkt]^T$ (which we suppress in the notation), 
			the influence function can be simplified to 
			\begin{align*}
					&\mathcal{IF}_{\xi_0}= (1-\barbxa\bD_{1}^{-1}\cxia)\alpha_i,
			&&\boldsymbol{\mathcal{IF}}_{\bxi_1}=\bD_{1}^{-1}\cxia\alpha_i,
			&&\mathcal{IF}_{\siga^2}=\alpha_i^2-\dsiga^2,\\
			&	\boldsymbol{\mathcal{IF}}_{\bxi_2}=\bD_{2}^{-1}\cxjb\beta_j,
				&&\mathcal{IF}_{\sigb^2}=\beta_j^2-\dsigb^2,
			&&	\boldsymbol{\mathcal{IF}}_{\bxi_3}=\bD_{3}^{-1}\cxijab\gamma_{ij},\\
			&	\mathcal{IF}_{\sigma_{\gamma}^2}=\gamma_{ij}^2-\dot{\sigma}_{\gamma}^2,
			&&\boldsymbol{\mathcal{IF}}_{\bxi_4}=\bD_4^{-1}\cxijkw  e_{ijk},
				&&\mathcal{IF}_{\sigma_{e}^2}=e_{ijk}^2-\dsige^2.
			\end{align*}
			These expression are not easy to obtain directly because the row, column, interaction and within-cell parameters are estimated at different rates. As is well-known, the estimators are not robust because the influence function is unbounded in the covariates, random effects, and errors.  Analogous results can be obtained for the case $\eta=\infty$.
			
			The central limit theorem allows us to construct asymptotic confidence intervals for the parameters in the model. 
			Asymptotic $100(1-b)\%$ confidence intervals for $\dot\xi_{1q},\dot\xi_{2q},\dot\xi_{3q},\dot\xi_{4q}$ are given straightforwardly by
			\begin{align*}
				&[ \xi_{1q}-\mathcal{N}_{b}\hat{\sigma}_\alpha\hat{d}^{*1/2}_{1qq}/g^{1/2},  \xi_{1q}+\mathcal{N}_{b}\hat{\sigma}_\alpha\hat{d}^{*1/2}_{1qq}/g^{1/2}], 
				\\
		&[ \xi_{2q}-\mathcal{N}_{b}\hat{\sigma}_\beta\hat{d}^{*1/2}_{2qq}/h^{1/2},  \xi_{2k}+\mathcal{N}_{b}\hat{\sigma}_\beta\hat{d}^{*1/2}_{2qq}/h^{1/2}],  
		\\
		&[ \xi_{3q}-\mathcal{N}_{b}\hat{\sigma}_\gamma\hat{d}^{*1/2}_{3qq}/(gh)^{1/2},  \xi_{3q}+\mathcal{N}_{b}\hat{\sigma}_\gamma\hat{d}^{*1/2}_{3qq}/(gh)^{1/2}], 
		\\
	&[ \xi_{4q}-\mathcal{N}_{b}\hat{\sigma}_e\hat{d}^{*1/2}_{4qq}/n^{1/2},  \xi_{4q}+\mathcal{N}_{b}\hat{\sigma}_e\hat{d}^{*1/2}_{4qq}/n^{1/2}].
			\end{align*}
	For each parameter, $q$ ranges from $1$ to its corresponding $p_a, p_b, p_{ab}$ and $p_w$. Here, $\mathcal{N}_{b}$ denotes the inverse of the cumulative normal distribution function evaluated at $1-b/2$, and the term $\hat{d}^{*}_{iqq}$ represents the $q$th diagonal element of $\hat{\bD}_{i}^{-1}$ for $i=1,2,3,4$, as defined in Corollary \ref{corollary}.
Setting the confidence interval on the log scale and back-transforming, asymptotic $100(1-a)\%$ confidence intervals for $\dsiga, \dsigb, \dsiggama,$ and $\dsige$ are given by
			\begin{align*}
				&[\hat{\sigma}_\alpha\exp\{-\mathcal{N}_{b}(\hat\mu_{4\alpha}-\hat{\sigma}_\alpha^4)^{1/2}/(2g^{1/2}\hat{\sigma}_\alpha^2)\},
				\hat{\sigma}_\alpha\exp\{\mathcal{N}_{b}(\hat\mu_{4\alpha}-\hat{\sigma}_\alpha^4)^{1/2}/(2g^{1/2}\hat{\sigma}_\alpha^2)\}
				],
				\\
				&[\hat{\sigma}_\beta\exp\{-\mathcal{N}_{b}(\hat\mu_{4\beta}-\hat{\sigma}_\beta^4)^{1/2}/(2h^{1/2}\hat{\sigma}_\beta^2) \},
				\hat{\sigma}_\beta\exp\{\mathcal{N}_{b}(\hat\mu_{4\beta}-\hat{\sigma}_\beta^4)^{1/2}/(2h^{1/2}\hat{\sigma}_\beta^2) \}
				],
				\\
				&[\hat{\sigma}_\gamma\exp\{-\mathcal{N}_{b}(\hat\mu_{4\gamma}-\hat{\sigma}_\gamma^4)^{1/2}/(2g^{1/2}h^{1/2}\hat{\sigma}_\gamma^2) \},
				\hat{\sigma}_\gamma\exp\{\mathcal{N}_{b}(\hat\mu_{4\gamma}-\hat{\sigma}_\gamma^4)^{1/2}/(2g^{1/2}h^{1/2}\hat{\sigma}_\gamma^2)\}
				],
				\\
				&[\hat{\sigma}_e\exp\{-\mathcal{N}_{b}(\hat\mu_{4e}-\hat{\sigma}_e^4)^{1/2}/(2n^{1/2}\hat{\sigma}_e^2)\},
				\hat{\sigma}_e\exp\{\mathcal{N}_{b}(\hat\mu_{4e}-\hat{\sigma}_e^4)^{1/2}/(2n^{1/2}\hat{\sigma}_e^2)\}
				],
			\end{align*}
			where $\hat{\sigma}_\alpha^4, \hat{\sigma}_\beta^4, \hat{\sigma}_\gamma^4,$ and $ \hat{\sigma}_e^4$ are in Corollary \ref{corollary}. Squaring the end points provides asymptotic $100(1-b)\%$ confidence intervals for $\dsiga^2, \dsigb^2, \dsiggama^2,$ and $\dsige^2$, respectively.

			 	Our theoretical results show that the row, column, interaction, and within-cell parameters are estimated at different rates. From the structure of $\bF$, even without assuming normality, the maximum likelihood {and REML} estimators for the row and column parameters are independent of both the interaction parameter estimators and the within-cell parameter estimators. Additionally, the interaction parameter estimators are independent of the within-cell parameter estimators. However, there is dependence between the estimators of the row and column parameters when $0 < \eta < \infty$.
			 	This implies that when we view the estimators of the row and column parameters together, the three sets of estimators (row and column, interaction, and within-cell) are asymptotically orthogonal.		
     
			 	For scenarios where $\eta = 0$ or $\eta = \infty$, we can treat the estimators for the row and column parameters separately, with all four sets being asymptotically orthogonal.
			 	Further insights show asymptotic orthogonality between the row slope and its variance, the column slope and its variance, the interaction slope and its variance, and the within-cell slope and its variance. A notable exception exists for the intercept, which is only asymptotically orthogonal to the row (or column) variance when the random effect distribution of the row (or column) is symmetrically centered around zero.
			 	
			 	In working with the model (\ref{two-way crossed model-rw }), we estimate all parameters. If we have no row covariates, we discard $\bxi_1$; if we have no column covariates, we discard $\bxi_2$; if we have no interaction covariates, we discard $\bxi_3$; if we have no within-cell covariates, we discard $\bxi_4$.  The results for these cases can be obtained as special cases of the general results by deleting the components of vectors and the rows and columns of matrices corresponding to the discarded parameter. If there are no  row covariates in the model (there is no $\bxi_1$ in the model), we drop rows and columns 2 to $p_a+1$ from $\bF$ etc. 
     There is a corresponding simplification to Condition A4. Finally, we have treated the covariates in the model as fixed, conditioning on them when they are random. As noted by \cite{yoon2020effect}, when the covariatse are random, it makes sense to treat them as having a similar covariance structure to the response.

		
			\section{Numerical Results}\label{sec:5}
   \begin{table}[!h]
\caption{Simulated coverage (cvge) and average length (len) of confidence intervals based on 1000 replicate analyses when $\alpha_i, \beta_j, \gamma_{ij}$ and $e_{ijk}$ have normal distributions with variance $\siga^2=9$, $\sigb^2=49$, $\siggama^2=36$ and $\sige^2=81$, respectively.}\label{Tab1}
\begin{tabular}{ccccccccccc}
\toprule
\multirow{3}{*}{Estimates}                    &      & \multicolumn{4}{c}{m=10}                            &  & \multicolumn{4}{c}{m=30}                            \\
                                              &      & \multicolumn{2}{c}{h=10} & \multicolumn{2}{c}{h=50} &  & \multicolumn{2}{c}{h=10} & \multicolumn{2}{c}{h=50} \\ \cline{3-6}\cline{8-11}
                                              &      & Cvge        & Len        & Cvge        & Len        &  & Cvge        & Len        & Cvge        & Len        \\\hline
\multirow{2}{*}{$\xi_1$}           & g=10 & 0.80        & 3.34       & 0.90        & 4.85       &  & 0.78        & 4.75       & 0.91        & 3.03       \\
                                              & g=50 & 0.88        & 1.46       & 0.93        & 1.68       &  & 0.89        & 1.48       & 0.92        & 1.62       \\
\multirow{2}{*}{$\sigma_\alpha^2$} & g=10 & 0.82        & 3.14       & 0.88        & 2.80       &  & 0.79        & 2.98       & 0.75        & 1.93       \\
                                              & g=50 & 0.94        & 1.87       & 0.92        & 1.24       &  & 0.97        & 2.17       & 0.94        & 1.38       \\\hline
                                              &      &             &            &             &            &  &             &            &             &            \\\hline
\multirow{2}{*}{$\xi_2$}           & g=10 & 0.90        & 5.96       & 0.95        & 2.22       &  & 0.92        & 8.19       & 0.94        & 2.35       \\
                                              & g=50 & 0.92        & 5.35       & 0.94        & 2.51       &  & 0.91        & 5.40       & 0.95        & 2.43       \\
\multirow{2}{*}{$\sigma_\beta^2$}  & g=10 & 0.71        & 4.00       & 0.94        & 2.83       &  & 0.72        & 3.72       & 0.94        & 2.81       \\
                                              & g=50 & 0.65        & 3.32       & 0.94        & 2.61       &  & 0.68        & 3.73       & 0.92        & 2.54       \\\hline
                                              &      &             &            &             &            &  &             &            &             &            \\\hline
\multirow{2}{*}{$\xi_3$}           & g=10 & 0.92        & 1.46       & 0.92        & 0.49       &  & 0.93        & 1.25       & 0.94        & 0.54       \\
                                              & g=50 & 0.93        & 0.51       & 0.92        & 0.22       &  & 0.94        & 0.50       & 0.95        & 0.23       \\
\multirow{2}{*}{$\sigma_\gamma^2$} & g=10 & 0.82        & 1.58       & 0.90        & 0.83       &  & 0.78        & 1.26       & 0.89        & 0.68       \\
                                              & g=50 & 0.93        & 0.83       & 0.96        & 0.42       &  & 0.88        & 0.68       & 0.95        & 0.35       \\\hline
                                              &      &             &            &             &            &  &             &            &             &            \\\hline
\multirow{2}{*}{$\xi_4$}           & g=10 & 0.96        & 0.38       & 0.95        & 0.18       &  & 0.95        & 0.22       & 0.94        & 0.54       \\
                                              & g=50 & 0.96        & 0.18       & 0.96        & 0.08       &  & 0.94        & 0.10       & 0.96        & 0.23       
                                              \\ \bottomrule
\end{tabular}
\end{table}
 \begin{table}[]
\caption{Simulated coverage (cvge) and average length (len) of confidence intervals based on 1000 replicate analyses when $\alpha_i$ and $\gamma_{ij}$ have normal distributions while $\beta_j$ and $e_{ijk}$ have mixture distributions with variance $\siga^2=9$, $\sigb^2=49$, $\siggama^2=36$ and $\sige^2=81$, respectively.}\label{Tab2}
\begin{tabular}{ccccccccccc}
\toprule
\multirow{3}{*}{Estimates}                    &      & \multicolumn{4}{c}{m=10}                            &  & \multicolumn{4}{c}{m=30}                            \\
                                              &      & \multicolumn{2}{c}{h=10} & \multicolumn{2}{c}{h=50} &  & \multicolumn{2}{c}{h=10} & \multicolumn{2}{c}{h=30} \\ \cline{3-6}\cline{8-11}
                                              &      & Cvge        & Len        & Cvge        & Len        &  & Cvge        & Len        & Cvge        & Len        \\\hline
\multirow{2}{*}{$\xi_1$}           & g=10 & 0.79        & 3.30       & 0.90        & 2.76       &  & 0.82        & 3.46       & 0.90        & 3.65       \\
                                              & g=50 & 0.89        & 1.74       & 0.94        & 1.66       &  & 0.90        & 1.34       & 0.94        & 1.66       \\
\multirow{2}{*}{$\sigma_\alpha^2$} & g=10 & 0.82        & 2.95       & 0.69        & 1.64       &  & 0.80        & 2.85       & 0.74        & 1.83       \\
                                              & g=50 & 0.96        & 1.79       & 0.93        & 1.21       &  & 0.94        & 1.75       & 0.94        & 1.21       \\\hline
                                              &      &             &            &             &            &  &             &            &             &            \\\hline
\multirow{2}{*}{$\xi_2$}           & g=10 & 0.80        & 3.48       & 0.90        & 1.36       &  & 0.80        & 2.56       & 0.92        & 1.88       \\
                                              & g=50 & 0.89        & 3.23       & 0.94        & 1.88       &  & 0.88        & 3.58       & 0.95        & 2.15       \\
\multirow{2}{*}{$\sigma_\beta^2$}  & g=10 & 0.72        & 4.91       & 0.92        & 3.46       &  & 0.71        & 4.74       & 0.91        & 3.51       \\
                                              & g=50 & 0.63        & 4.32       & 0.89        & 3.33       &  & 0.73        & 5.73       & 0.91        & 3.30       \\\hline
                                              &      &             &            &             &            &  &             &            &             &            \\\hline
\multirow{2}{*}{$\xi_3$}           & g=10 & 0.94        & 1.46       & 0.94        & 0.50       &  & 0.95        & 1.29       & 0.95        & 0.56       \\
                                              & g=50 & 0.95        & 0.49       & 0.94        & 0.22       &  & 0.95        & 0.54       & 0.95        & 0.23       \\
\multirow{2}{*}{$\sigma_\gamma^2$} & g=10 & 0.81        & 1.31       & 0.91        & 0.70       &  & 0.78        & 1.17       & 0.88        & 0.64       \\
                                              & g=50 & 0.91        & 0.70       & 0.95        & 0.36       &  & 0.88        & 0.64       & 0.94        & 0.33       \\\hline
                                              &      &             &            &             &            &  &             &            &             &            \\\hline
\multirow{2}{*}{$\xi_4$}           & g=10 & 0.96        & 0.25       & 0.95        & 0.12       &  & 0.96        & 0.15       & 0.96        & 0.56       \\
                                              & g=50 & 0.95        & 0.12       & 0.96        & 0.05       &  & 0.95        & 0.06       & 0.95        & 0.23    
                                              \\ \bottomrule
\end{tabular}
\end{table}
We carried out simulations to explore the accuracy of the asymptotic results.  We generated data for $g \in \{10, 50\}$, $h \in \{10,50\}$, and $m \in \{10, 30\}$.  The covariate $x_{ijk}$ was generated with a crossed structure by setting $x_{ijk} = 4 +t_i+ 1.5u_j + 2v_{ij} + 3w_{ijk}$, where $t_i$, $u_j$, $v_{ij}$, and $w_{ijk}$ are independent standard normal random variables.  The random effects were generated independently with $\alpha_i$ generated from $F_{\alpha}$ with $\E(\alpha_i) = 0$ and $\var(\alpha_i) = \siga^2$, $\beta_j$ from  $F_{\beta}$ with $\E(\beta_j) = 0$ and $\var(\beta_j) = \sigb^2$, $\gamma_{ij}$ from $F_{\gamma}$ with $\E(\gamma_{ij}) = 0$ and $\var(\gamma_{ij}) = \siggama^2$, and $e_{ijk}$ from $F_{e}$ with $\E(e_{ijk}) = 0$ and $\var(e_{ijk}) = \siggama^2$.   We set $\dsiga^2 =9$ with $F_\alpha=N(0,\dsiga^2)$,
 $\dsigb^2 =49$ with  $F_\beta=N(0,\dsigb^2)$ or $F_\beta=0.3N(0.5,1)+0.7N(-0.3\times0.5/0.7,(\dsigb^2-0.375-0.7\mu^2)/0.7)$,
 $\dsiggama^2=36$ with  $F_\gamma=N(0,\dsiggama^2)$, and
 $\dsige^2=81$ with  $F_e=N(0,\dsige^2)$ or $F_e=0.3N(0.5,1)+0.7N(-0.3\times0.5/0.7,(\dsige^2-0.375-0.7\mu^2)/0.7)$.  Here $\mu=-0.3\times 0.5/0.7$.
 We then computed the response using the model
\begin{align*}
 y_{ijk}=&\xddd\xi_0+(\xidd-\xddd)\xi_1+(\xdjd-\xddd)\xi_2+(\xijd-\xidd-\xdjd+\xddd)\xi_3+(x_{ijk}-\xijd)\xi_4    \\&
 +\alpha_i+\beta_j+\gamma_{ij}+e_{ijk},\qquad i=1,\ldots,g, j=1,\ldots,h, k=1,\ldots,m,
\end{align*}
with the true parameters to $\dot\bxi = [0,5,7,3,4]^T$. We explored 32 scenarios resulting from the combination of the 8 $(g,h,m)$ configurations with the 4 additional random effect settings. 

For each simulation setting, we generated 1000 datasets and applied the model using REML through the \texttt{lmer} function in the R package \textit{lme4}. We computed asymptotic standard errors (SE), constructed $95\%$ confidence intervals as described in Section~\ref{Sec:4}, and computed the coverage (with Monte Carlo standard error $<0.006$) and mean  length of each of these intervals.

Tables~\ref{Tab1} and \ref{Tab2} show the estimated confidence interval coverage (Cvge) and mean length (Len) over the replicated datasets for the settings with $\siga^2=9$, $\sigb^2=49$, $\siggama^2=36$ and $\sige^2=81$ and both \(\beta_j\) and \(e_{ijk}\) under normal (Table~\ref{Tab1}) and mixture distributions (Table~\ref{Tab2}).  As we expect, the standard errors for the estimates of the within-cell slope $\xi_4$ are identical and so the confidence intervals attain their nominal coverage, even when $g=10, h=10$ and $m=10$. This shows the advantage of the faster rate of convergence depending on $n$ (which is already $>1000$) rather than $g$ or $h$ or $gh$. 
For the row slope $\xi_1$, the confidence interval is below the nominal coverage when \(g = 10\), regardless of \(h = 10\) or \(50\), and \(m = 10\) or \(30\). However, as $g$ increases to $50$, the coverage also increases, approaching or even reaching the nominal coverage. The standard deviation $\siga$ follows the same pattern: with $g=10$ the initial undercoverage is greater, but as $g$ increases, the convergence to the nominal coverage becomes more dramatic. This also supports our theoretical asymptotic result---that the rate of convergence for the row parameters depends on $g$ rather than $h$ or $m$.
For the column slope $\xi_2$, the confidence interval is consistently below the nominal coverage for \(h = 10\), regardless of \(g = 10\) or \(50\), or \(m = 10\) or \(30\).  But as $h$ increases to $50$, the coverage also increases, approaching or reaching the nominal coverage. The standard deviation $\sigb$ follows the same pattern: with \(h = 10\), the undercoverage is more pronounced, but approaches the nominal coverage as \(h\) increases. Again, this result also supports our theoretical asymptotic result that the rate of convergence for the between-column parameters depends on $h$ rather than $g$ or $m$.
For the interaction slope $\xi_3$, the confidence interval is close to or has reached the nominal coverage with \(g = 10\) and \(h = 10\), regardless of \(m = 10\) or \(30\).  
For the interaction variance $\sigma_{\gamma}$, the coverage is substantially below the nominal level with \(g = 10\) and \(h = 10\), regardless of \(m = 10\) or \(30\). However, as either \(g\) or \(h\) increases, the coverage rises, nearing the nominal rate. When both \(g\) and \(h\) are at \(30\), the coverage attains the nominal level.
These results support our theoretical asymptotic result -- that the rate of convergence for the between-cell parameters depends on the product $gh$, which is already greater than $100$.

 
\section{Proofs}\label{Proof}
The proofs of Theorems \ref{theorem1} and \ref{theorem3} are presented in Sections~\ref{proof1} and \ref{proof-theorem3}, respectively. The supporting lemmas used in these proofs are  proved in Sections~\ref{proof2} and ~\ref{proof3}.
   
\subsection{Proof of Theorem \ref{theorem1}}\label{proof1}
			
\begin{proof}
Write 
\begin{equation}\label{mainproof eq1}
\begin{split}
								\bK^{-1/2}\bpsi(\bomega)=&\bK^{-1/2}\bphi-\bB\bK^{1/2}(\bomega-\dot\bomega)   + T_1+T_2(\bomega)+T_3(\bomega),
\end{split}
\end{equation}
where  $\bB=\lim_{g,h,m\to\infty}-\bK^{-1/2}\E\triangledown\bpsi (\dot{\bomega})\bK^{-1/2}$, $T_1=\bK^{-1/2}[\bpsi(\dot\bomega)-\bphi(\dot\bomega)]$,
$T_2(\bomega)=\bK^{-1/2}\E[\bpsi(\bomega)-\bpsi(\dot\bomega)] + \bB \bK^{1/2}(\bomega-\dot\bomega)$ and $T_3(\bomega)=\bK^{-1/2}[\bpsi(\bomega)-\bpsi(\dot\bomega)-\E\{\bpsi(\bomega)-\bpsi(\dot\bomega)\}]$. 
 For $0<M<\infty$, let  $\boldsymbol{N}=\{\bomega: |\bK^{1/2}(\bomega-\dot\bomega)|\le M\}$ define a local (shrinking) ball centered at $\dot{\bomega}$.  If we can show  that $|T_1|=o(1)$, $\underset{\bomega\in \boldsymbol{N}}{\sup}|T_2(\bomega)|=o_p(1)$ and $\underset{\bomega\in \boldsymbol{N}}{\sup}|T_3(\bomega)|=o_p(1)$,
 respectively, then uniformly on $|\bK^{1/2}(\bomega-\dot\bomega)|\le M$, we have
				\begin{equation}\label{lastone}
					\bK^{-1/2}\bpsi(\bomega)=\bK^{-1/2}\bphi- \bB \bK^{1/2}(\bomega-\dot\bomega)+o_p(1).
				\end{equation}
Multiplying by $(\bomega-\dot\bomega)^T\bK^{1/2}$, 
				\begin{equation*}
					(\bomega-\dot\bomega)^T\bpsi(\bomega)=(\bomega-\dot\bomega)^T\bphi- (\bomega-\dot\bomega)^T\bK^{1/2}\bB \bK^{1/2}(\bomega-\dot\bomega)+o_p(1).
				\end{equation*}
				Since $\bB$ is positive definite, the right-hand side of (\ref{lastone}) is negative for $M$ sufficiently large. Therefore, according to Result 6.3.4 of \cite{ortega1970iterative}, a solution to the estimating equations exists in probability and satisfies $|\bK^{-1/2}(\hbomega-\dot\bomega)|=O_p(1)$, so $\hbomega \in \boldsymbol{N}$.  This allows us to substitute  $\hbomega$ for $\bomega$ in (\ref{lastone}) and rearrange the terms to obtain the asymptotic representation for $\hbomega$; the central limit theorem then follows from Lemma \ref{lema3}.
				
				It remains to show that the remainder  terms $T_1$, $T_2(\bomega)$, and $T_3(\bomega)$ in (\ref{mainproof eq1}) are of sufficiently small order that they can be ignored. In Lemma~\ref{lema1},  we establish $|T_1|=o_p(1)$ by showing that the result holds for each component of $T_1=\bK^{-1/2}[\bpsi(\dot\bomega)-\bphi(\dot\bomega)]$, by applying Chebychev's inequality, and calculating the variances of the components.
				
				The method used for addressing  $T_2(\bomega)$ and $T_3(\bomega)$ is inspired by \cite{bickel1975one} who applied similar arguments to one-step regression estimators. The approach was extended to maximum likelihood and REML estimators in linear mixed models by \cite{richardson1994asymptotic, lyu2019estimation}.  For $T_2(\bomega)$, we have 
				\begin{align*}
					\underset{\bomega\in \boldsymbol{N}}{\sup}  |T_2(\bomega)|&\le
					\underset{\bomega\in \boldsymbol{N}}{\sup} |\bK^{-1/2}[\E\{\bpsi(\bomega)-\bpsi(\dot\bomega)\}-\E\triangledown\bpsi(\dot\bomega)(\bomega-\dot\bomega)]|\\&
					\quad+
					\underset{\bomega\in \boldsymbol{N}}{\sup} |\bK^{-1/2}\E\triangledown\bpsi(\dot\bomega)(\bomega-\dot\bomega)+\bB\bK^{1/2}(\bomega-\dot\bomega)|\\&
					\le 	
						\underset{\bomega\in \boldsymbol{N}}{\sup} |\bK^{-1/2}[\E\triangledown\bpsi(\bOmega)-\E\triangledown\bpsi(\dot\bomega)](\bomega-\dot\bomega)|
					+
					M  \|-\bK^{-1/2}\E\triangledown\bpsi(\dot\bomega)\bK^{-1/2}-\bB\|\\&						
				\le 	
				M\underset{\bomega\in \boldsymbol{N}}{\sup} \|\bK^{-1/2}[\E\triangledown\bpsi(\bOmega)-\E\triangledown\bpsi(\dot\bomega)]\bK^{-1/2}\|
				+
				M  \|\bB_n-\bB\|,
								\end{align*}
				where the $i$th row of the derivative matrix $\triangledown\bpsi$ is evaluated at the $i$th row of the matrix $\bOmega$ and each row of $\bOmega$ lies between $\bomega$ and $\dot\bomega$, and $\bB_n=-\bK^{-1/2}\E\triangledown\bpsi(\dot\bomega)\bK^{-1/2}$.  In Lemma~\ref{lma Bn},  it is demonstrated that  $\lim_{g,h,m\to\infty}\|\bB_n-\bB\|=o(1)$.  Additionally,  in  Lemma~\ref{lma5}, we show
				$
					M{\sup}_{\bomega\in \boldsymbol{N}}\|\bK^{-1/2} [\E\triangledown\bpsi(\bOmega)-\E\triangledown\bpsi(\dot\bomega)]\bK^{-1/2}\|=o(1)$.
		 Moving to $T_3(\bomega)$,  we decompose $\boldsymbol{N}=\{\bomega: |\bK^{1/2}(\bomega-\dot\bomega)|\le M\}$ into the set of $N=O(g^{1/4})$ smaller cubes
					$\mathcal{C}=\{\mathcal{C}(\boldsymbol{\zeta}_k)\}$, where $\mathcal{C}(\boldsymbol{\zeta})= \{\bomega :|\bK^{1/2}(\boldsymbol{\zeta}-\dot\bomega)|\le Mg^{-\frac{1}{4}}\}$.  We first show that
					$|T_3(\bomega)|$ holds over the  set of indices. Using Chebyshev's inequality, for any $\eta>0$, we have 
					
					\begin{align*}
						&\sum_{k=1}^N\pr \left( |\bK^{-1/2}[\bpsi(\boldsymbol{\zeta}_k)-\bpsi(\dot\bomega)-\E\{\bpsi(\boldsymbol{\zeta}_k)-\bpsi(\dot\bomega)\}]|>\eta\right) \\&
						\le\eta^{-2}\sum_{k=1}^N\E|\bK^{-1/2}[\bpsi(\boldsymbol{\zeta}_k)-\bpsi(\dot\bomega)-\E\{\bpsi(\boldsymbol{\zeta}_k)-\bpsi(\dot\bomega)\}]|^2\\&	
						=
						\eta^{-2}g^{-1}\sum_{k=1}^N\E|\bpsi^{(a)}(\boldsymbol{\zeta}_k)-\bpsi^{(a)}(\dot\bomega)-\E\{\bpsi^{(a)}(\boldsymbol{\zeta}_k)-\bpsi^{(a)}(\dot\bomega)\}|^2\\&	\quad+
						\eta^{-2}h^{-1}\sum_{k=1}^N\E|\bpsi^{(b)}(\boldsymbol{\zeta}_k)-\bpsi^{(b)}(\dot\bomega)-\E\{\bpsi^{(b)}(\boldsymbol{\zeta}_k)-\bpsi^{(b)}(\dot\bomega)\}|^2\\&	\quad+
						\eta^{-2} (gh)^{-1}\sum_{k=1}^N\E|\bpsi^{(ab)}(\boldsymbol{\zeta}_k)-\bpsi^{(ab)}(\dot\bomega)-\E\{\bpsi^{(ab)}(\boldsymbol{\zeta}_k)-\bpsi^{(ab)}(\dot\bomega)\}|^2\\&\quad+	
						\eta^{-2}n^{-1}\sum_{k=1}^N\E|\bpsi^{(w)}(\boldsymbol{\zeta}_k)-\bpsi^{(w)}(\dot\bomega)-\E\{\bpsi^{(w)}(\boldsymbol{\zeta}_k)-\bpsi^{(w)}(\dot\bomega)\}|^2\\&	
						=	\eta^{-2}g^{-1}\sum_{k=1}^N\operatorname{trace}[\var\{\bpsi^{(a)}(\boldsymbol{\zeta}_k)-\bpsi^{(a)}(\dot\bomega)\}]
						+\eta^{-2}h^{-1}\sum_{k=1}^N\operatorname{trace}[\var\{\bpsi^{(b)}(\boldsymbol{\zeta}_k)-\bpsi^{(b)}(\dot\bomega)\}]\\&\quad
						+\eta^{-2} (gh)^{-1}\sum_{k=1}^N\operatorname{trace}[\var\{\bpsi^{(ab)}(\boldsymbol{\zeta}_k)-\bpsi^{(w)}(\dot\bomega)\}]
						+\eta^{-2}n^{-1}\sum_{k=1}^N\operatorname{trace}[\var\{\bpsi^{(w)}(\boldsymbol{\zeta}_k)-\bpsi^{(w)}(\dot\bomega)\}].
					\end{align*}
					We show in  Lemma \ref{lma3} that all variance terms are uniformly bounded by a finite constant $L$,  so 
					\begin{align*}
				&\sum_{k=1}^N\pr \left( |\bK^{-1/2}[\bpsi(\boldsymbol{\zeta}_k)-\bpsi(\dot\bomega)-\E\{\bpsi(\boldsymbol{\zeta}_k)-\bpsi(\dot\bomega)\}]|>\eta\right) \\&
				\le \eta^{-2}g^{-1}N(p_a+2)L+\eta^{-2}h^{-1}N(p_b+1)L+\eta^{-2} (gh)^{-1}N(p_{ab}+1)L+\eta^{-2}n^{-1}N(p_w+1)L\\&
					=o(1),
					\end{align*}
			  using fact that $N=O(g^{1/4})$.	Using Taylor expansion, we get
					\begin{align*}
						&\underset{1\le k\le N}{\max}\underset{\bomega\in\mathcal{C}(\boldsymbol{\zeta}_k)}{\sup}|\bK^{-1/2}[\bpsi(\bomega)-\bpsi(\boldsymbol{\zeta}_k)-\E\{\bpsi(\bomega)-\bpsi(\boldsymbol{\zeta}_k)\}]|\\&
						= \underset{1\le k\le N}{\max}\underset{\bomega\in\mathcal{C}(\boldsymbol{\zeta}_k)}{\sup}|\bK^{-1/2}\{\nabla\bpsi(\bOmega_k)(\bomega - \boldsymbol{\zeta}_k)-\E\nabla\bpsi(\bOmega_k)(\bomega-\boldsymbol{\zeta}_k)\}|
					\\&
					\le M \underset{\bomega\in\boldsymbol{N}}{\sup} g^{-1/4}\|\bK^{-1/2}\{\nabla\bpsi(\bOmega)-\E\nabla\bpsi(\bOmega)\}\bK^{-1/2}\|,
					\end{align*}
				where the rows of $\bOmega_k$ are between $\boldsymbol{\zeta}_k$ and $\bomega$. 
The result follows from Lemma \ref{lma6}.  
			\end{proof}
  			\subsection{Proof of Theorem \ref{theorem3} }\label{proof-theorem3}
We write 
\begin{align*}
				&|g^{-1/2}\{l_{A\siga^2}(\bomega)-l_{\siga^2}(\bomega)\}|\le\frac{1}{2g^{1/2}}|\operatorname{trace}\{ (\bX^T\bV^{-1}\bX)^{-1}\bX^T\bV^{-1}\bZ_1\bZ_1^T\bV^{-1}\bX\}|,\\
				&|h^{-1/2}\{l_{A\sigb^2}(\bomega)-l_{\sigb^2}(\bomega)\}|\le\frac{1}{2h^{1/2}}|\operatorname{trace}\{ (\bX^T\bV^{-1}\bX)^{-1}\bX^T\bV^{-1}\bZ_2\bZ_2^T\bV^{-1}\bX\}|,\\
				&|g^{-1/2}h^{-1/2}\{l_{A\siggama^2}(\bomega)-l_{\siggama^2}(\bomega)\}|\le\frac{1}{2g^{1/2}h^{1/2}}|\operatorname{trace}\{ (\bX^T\bV^{-1}\bX)^{-1}\bX^T\bV^{-1}\bZ_3\bZ_3^T\bV^{-1}\bX\}|,\\
				&|n^{-1/2}\{l_{A\sige^2}(\bomega)-l_{\sige^2}(\bomega)\}|\le \frac{1}{2n^{1/2}}|\operatorname{trace}\{ (\bX^T\bV^{-1}\bX)^{-1}\bX^T\bV^{-1}\bZ_0\bZ_0^T\bV^{-1}\bX\}|.
			\end{align*}
 From Supplementary Section S.2.7, for $g,h,m$ sufficiently large,  uniformly on $\bomega\in \boldsymbol{N}$, we have 
$|\operatorname{trace}\{ (\bX^T\bV^{-1}\bX)^{-1}\bX^T\bV^{-1}\bZ_i\bZ_i^T\bV^{-1}\bX\}|=O(1)$, {for $i=0,1,2,3$}. Consequently,
$|g^{-1/2}\{l_{A\siga^2}(\bomega)-l_{\siga^2}(\bomega)\}|=o_p(1)$,
$|h^{-1/2}\{l_{A\sigb^2}(\bomega)-l_{\sigb^2}(\bomega)\}|=o_p(1)$,
$|g^{-1/2}h^{-1/2}\{l_{A\siggama^2}(\bomega)-l_{\siggama^2}(\bomega)\}|=o_p(1)$,
and $|n^{-1/2}\{l_{A\sige^2}(\bomega)-l_{\sige^2}(\bomega)\}|=o_p(1)$ uniformly on $\bomega\in \boldsymbol{N}$, and the result follows from Theorem~\ref{theorem1}.

\subsection{Lemmas for the estimating function}\label{proof2}
		We present lemmas with outline proofs to show that \(T_1 = o_p(1)\), \(\bpsi(\dot{\bomega})\) approximates \(\bphi\),  a central limit theorem holds for \(\bphi\), and  the variances of \(\bpsi(\bomega) - \bpsi(\dot\bomega)\) are uniformly bounded. Detailed proofs can be found in the Supplementary Section S.6.
\begin{lma}\label{lema1}
	Suppose Condition A holds. Then $|\bK^{-\frac{1}{2}}\{\bpsi(\dot\bomega)-\bphi\}|=o_p(1)$. 
\end{lma}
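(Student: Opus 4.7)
The plan is to establish the result componentwise, since $\bK^{-1/2}$ is block diagonal and the components of $\bpsi(\dot{\bomega})$ and $\bphi$ share the same parameter-block structure; for each block I would apply Chebyshev's inequality after computing the appropriate variance and then combine the nine bounds by the triangle inequality. The first step is to substitute $\bomega=\dot{\bomega}$ into the residuals appearing in \eqref{mlequation}. Because the covariate decomposition by level makes all covariate and fixed-effect contributions to the centered residuals cancel exactly, under the model they reduce to pure linear combinations of the random effects and errors:
\begin{align*}
\bar{R}(\dot{\bomega}) &= \bar{\alpha}+\bar{\beta}+\bar{\gamma}+\bar{e},\\
\barRia(\dot{\bomega}) &= (\alpha_i-\bar{\alpha})+(\gammaid-\gammadd)+(\bar{e}_{i.}-\bar{e}),\\
\barRjb(\dot{\bomega}) &= (\beta_j-\bar{\beta})+(\gammadj-\gammadd)+(\bar{e}_{.j}-\bar{e}),\\
\barRijab(\dot{\bomega}) &= (\gamma_{ij}-\gammaid-\gammadj+\gammadd)+(\bar{e}_{ij}-\bar{e}_{i.}-\bar{e}_{.j}+\bar{e}),\\
\barRijkw(\dot{\bomega}) &= e_{ijk}-\bar{e}_{ij}.
\end{align*}

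For the linear blocks $l_{\xi_0},l_{\bxi_1},\dots,l_{\bxi_4}$ I would expand the coefficient ratios $n/\dlambda_4$, $hm/\dlambda_2$, $gm/\dlambda_3$, $m/\dlambda_1$, $1/\dlambda_0$ around their limits $1/\dot\tau$, $1/\dsiga^2$, $1/\dsigb^2$, $1/\dsiggama^2$, $1/\dsige^2$ (with $gm/\dlambda_4\to\eta/\dot\tau$ when $0\le\eta<\infty$), each with remainder of order $O(1/g+1/h+1/m)$, and then substitute the residual expressions above. Using the identity $1/\dot\tau-1/\dsiga^2=-\eta\dsigb^2/(\dsiga^2\dot\tau)$ and its analogues, the leading terms reconstruct the linear entries of $\bphi$; for instance, the sum $(hm/\dlambda_2)\sum_i\cxia\alpha_i+(n/\dlambda_4)\barbxa\bar{\alpha}+(n/\dlambda_4)\barbxa\bar{\beta}$ collapses precisely to the stated form of $\bphi_{\bxi_1}$. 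What remains are coefficient errors multiplying $O_p(g^{1/2})$ random sums, together with stochastic noise such as $(1/\dsiga^2)\sum_i\cxia(\gammaid-\gammadd)$, $(1/\dsiga^2)\sum_i\cxia(\bar{e}_{i.}-\bar{e})$ and $(n/\dlambda_4)\barbxa(\bar\gamma+\bar{e})$; using mutual independence together with the $2+\delta$ moment bounds on the covariates in Condition A4 and the $4+\delta$ moment bounds on the random effects in Condition A3, direct second-moment computations show each remainder has variance $o(g)$ (respectively $o(h)$, $o(gh)$ and $o(n)$), so Chebyshev's inequality yields $o_p(1)$ after normalization by $\bK^{-1/2}$.

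The variance-component blocks are where I expect the main obstacle. Expanding $\barRia^2$ in $l_{\sigma_\alpha^2}(\dot{\bomega})$ produces not only the target $(\alpha_i-\bar{\alpha})^2$ but also squared noise pieces $(\gammaid-\gammadd)^2$, $(\bar{e}_{i.}-\bar{e})^2$, three cross products, and the separate $\bar{R}^2$ contribution. The crux is the exact cancellation of the deterministic trace corrections with the means of these squared residuals, powered by the algebraic identity $h^2m^2\dsiga^2+hm^2\dsiggama^2+hm\dsige^2=hm\dlambda_2$ (and its analogue $h^2m^2\dsiga^2+ghm^2\dsigb^2+hm^2\dsiggama^2+hm\dsige^2=hm\dlambda_4$), which gives $\E l_{\sigma_\alpha^2}(\dot{\bomega})=0$. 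After centering, the leading stochastic term is $(h^2m^2/(2\dlambda_2^2))\sum_i(\alpha_i^2-\dsiga^2)\to\phi_{\sigma_\alpha^2}$; every cross product is zero mean by independence, and its variance, controlled by the $4+\delta$ moment assumption, is $o(g)$ after the normalization by $g^{-1/2}$. The same template---matching a trace identity to the mean of a sum of squared residuals, then bounding zero-mean fourth-order cross terms with the $4+\delta$ moments---handles $l_{\sigma_\beta^2}$, $l_{\sigma_\gamma^2}$ and $l_{\sigma_e^2}$, and the careful bookkeeping of these trace identities (especially when $0<\eta<\infty$, where the $\bar{R}^2$ piece interacts nontrivially with $\dlambda_4$ and $\dot\tau$) is the only nonroutine part of the argument.
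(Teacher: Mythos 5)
Your proposal is correct and follows essentially the same route as the paper's proof: a componentwise argument in which each block of $\bK^{-1/2}\{\bpsi(\dot\bomega)-\bphi\}$ is shown to have mean zero (your trace-cancellation identities such as $hm\dot\lambda_2=h^2m^2\dot\sigma_\alpha^2+hm^2\dot\sigma_\gamma^2+hm\dot\sigma_e^2$ are exactly what makes $\E\, l_{\sigma_\alpha^2}(\dot\bomega)=0$) and variance $o(1)$ after normalization, followed by Chebyshev's inequality. The paper's version works the $\xi_0$ component explicitly using $hm/\dot\lambda_4-1/\dot\tau=O((g\wedge h)^{-1})$ and defers the remaining components to the supplement, but the decomposition of the residuals at $\dot\bomega$ into pure random-effect terms and the coefficient-limit expansions you describe are the same ingredients.
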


\begin{proof}
	We establish that each component of $\bK^{-\frac{1}{2}}\{\bpsi(\dot\bomega)-\bphi\}$ converges to zero in probability. For example,	we have 
	\begin{align*}
		l_{\xi_0}(\dot\bomega)-\phi_{\xi_0}=({gm}/{\dot\lambda_4}-{1}/{\sigmacom})(\sumig\alpha_i+g\bar\beta)+({ghm}/{\dot\lambda_4})(\gammadd-\eddd),
	\end{align*}
 where $\bar\beta$, $\gammadd$, and $\eddd$ are defined in Appendix A.
	Then we have $g^{-1/2} \{l_{\xi_0}(\dot\bomega)-\phi_{\xi_0}\}$ has mean zero and variance
	\begin{align*}
		&\var[g^{-1/2}\{l_{\xi_0}(\dot\bomega)-\phi_{\xi_0}\}]\\&=g^{-1}({hm}/{\dlambda_4}-1/\sigmacom)^2\{\sumig\var(\alpha_i)+g^2\var(\bar\beta)\}+({gh^2m^2}/{\dlambda_4^2})\{\var(\gammadd)+\var(\eddd)\}\\&
		=O((h\wedge g)^{-2}),
	\end{align*}
	because ${hm}/{\dlambda_4}-{1}/{\sigmacom}=O((h\wedge g)^{-1})$, where $a \wedge b$ represents minimum value of $a$ and $b$.
	Thus, as $g,h,m\rightarrow\infty$, we obtain 
	\begin{align*}
		g^{-1/2} \{l_{\xi_0}(\dot\bomega)-\phi_{\xi_0}\}=o_p(1).
	\end{align*}
	 Similarly, we can obtain
	\begin{align*}
		&g^{-1/2}\{l_{\bxi_1}(\dot\bomega)-\bphi_{\bxi_1}\}=\bo_p(1),
		&&g^{-1/2}\{l_{\sigma_{\alpha}^2}(\dot\bomega )-\phi_{\sigma_{\alpha}^2}\}=o_p(1),
		&&h^{-1/2}\{l_{\bxi_2}(\dot\bomega)-\bphi_{\bxi_2}\}=\bo_p(1), \\
		&h^{-1/2}\{l_{\sigma_{\beta}^2}(\dot\bomega )-\phi_{\sigma_{\beta}^2}\}=o_p(1),
				&& (gh)^{-1/2}\{l_{\bxi_3}(\dot\bomega)-\bphi_{\bxi_3}\}=\bo_p(1), 
		&& (gh)^{-1/2}\{l_{\sigma_{\gamma}^2}(\dot\bomega )-\phi_{\sigma_{\gamma}^2}\}=o_p(1),\\
		&n^{-1/2}(l_{\bxi_4}(\dot\bomega)-\bphi_{\bxi_4})=\bo_p(1), 
		&&n^{-1/2}\{l_{\sigma_{e}^2}(\dot\bomega )-\phi_{\sigma_{e}^2}\}=o_p(1),
	\end{align*}
	where $\bo_p(1)$ represents a vector with all elements having $o_p(1)$. Full detail can be found in Supplementary Section S.6.1.
\end{proof}
\begin{lma}\label{lema3}
	Suppose Condition A holds. Then, as $g,h,m\to\infty$, $\bK^{-1/2}\bphi\xrightarrow{D}N(\bzero,\bA)$, where $\bA=\diag(\bA_1,\bA_2,\bA_3)$ with
	\begin{align*}
		&\bA_{1}
		=	\resizebox{1.1\linewidth}{!}{%
			$1/\dot{\tau}\begin{bmatrix}
			1&{\barbxat}&{\E\alpha_1^3}/(2\dot{\sigma}_\alpha^4)&{\eta^{1/2}\barbxbt}&{\eta^{1/2}\E \beta_1^3}/{(2\dot{\sigma}_\beta^4)}\\
			{\barbxa}&{\dot{\tau}\bD_1}/\dot{\sigma}_\alpha^2+{\barbxa\barbxat}&{\E\alpha_1^3\barbxa}/{(2\dot{\sigma}_\alpha^4)}&{\eta^{1/2}\barbxa\barbxbt}&{\eta^{1/2}\E\beta_1^3\barbxa}/{(2\dot{\sigma}_\beta^4)}\\
			{\E\alpha_1^3}/{(2\dot{\sigma}_\alpha^4 )}&{\E\alpha_1^3\barbxat}/{(2\dot{\sigma}_\alpha^4\dot{\tau})}&\dot{\tau}({\E\alpha_1^4-\dot{\sigma}_\alpha^4})/({4\dot{\sigma}_\alpha^8})&{\eta^{1/2}\E\alpha_1^3 \barbxbt}/({2\dot{\sigma}_\alpha^4})&0\\
			{\eta^{1/2}\barbxb}&{\eta^{1/2}\barbxb\barbxat}&{\eta^{1/2}\E\alpha_1^3 \barbxb}/({2\dot{\sigma}_\alpha^4})&{\dot{\tau}{\bD}_2}/{\dot{\sigma}_\beta^2}+ {\eta\barbxb\barbxbt}&{\eta E\beta_1^3\barbxb}/({2\dot{\sigma}_\beta^4})\\
			{\eta^{1/2}\E \beta_1^3}/(2\dot{\sigma}_\beta^4)&{\eta^{1/2}\E\beta_1^3\barbxat}/({2\dot{\sigma}_\beta^4})&0&{\eta E\beta_1^3\barbxbt}/{(2\dot{\sigma}_\beta^4)}&\dot{\tau}({\E\beta_1^4-\dot{\sigma}_\beta^4})/({4\dot{\sigma}_\beta^8})			
		\end{bmatrix},$}
		\\
		& 
		\bA_2=\begin{bmatrix}
			{\bD_3}/{\dot{\sigma}_\gamma^2}&\bzero_{[p_{ab}:1]}\\
			\bzero_{[1:p_{ab}]}&({\E\gamma_{11}^4-\dot{\sigma}_\gamma^4}/){4\dot{\sigma}_\gamma^8}
		\end{bmatrix}
		\quad\text{and}\quad
		\bA_3=\begin{bmatrix}
			{\bD_4}/{\dot{\sigma}_e^2}&\bzero_{[p_w:1]}\\
			\bzero_{[1:p_w]}&({\E e_{111}^4-\dot{\sigma}_e^4})/{(4\dot{\sigma}_e^8)}
		\end{bmatrix}.
	\end{align*}
	
\end{lma}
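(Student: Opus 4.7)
The plan is to exploit the mutual independence of the noise families $\{\alpha_i\}$, $\{\beta_j\}$, $\{\gamma_{ij}\}$, $\{e_{ijk}\}$ to split $\bK^{-1/2}\bphi$ into three independent sub-vectors whose asymptotic distributions I can obtain separately via a multivariate Lyapunov central limit theorem. Inspecting the formulas for $\bphi$, the row-column components $(\phi_{\xi_0},\bphi_{\bxi_1}^T,\phi_{\sigma_\alpha^2},\bphi_{\bxi_2}^T,\phi_{\sigma_\beta^2})^T$ involve only the $\alpha$'s and $\beta$'s, the interaction components $(\bphi_{\bxi_3}^T,\phi_{\sigma_\gamma^2})^T$ involve only the $\gamma$'s, and the within-cell components $(\bphi_{\bxi_4}^T,\phi_{\sigma_e^2})^T$ involve only the $e$'s. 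By independence, the three correspondingly scaled sub-blocks are mutually independent, which yields the block-diagonal form $\bA=\diag(\bA_1,\bA_2,\bA_3)$ without any further work, and reduces the problem to proving a CLT within each sub-block.

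For the within-cell block, $n^{-1/2}(\bphi_{\bxi_4}^T,\phi_{\sigma_e^2})^T$ is a normalized sum of $n$ mutually independent mean-zero random vectors indexed by $(i,j,k)$. I will apply the Cramér--Wold device and verify the Lyapunov condition at order $2+\delta/2$ using Condition A3, so that $e_{111}^2-\dsige^2$ has finite $(2+\delta/2)$-moments, together with Condition A4, which bounds the normalized sums of $|\cxijkw|^{2+\delta}$, and Hölder's inequality to control the mixed moments of $\cxijkw e_{ijk}$. The cross-covariance between $\cxijkw e_{ijk}$ and $\tfrac12(e_{ijk}^2-\dsige^2)$ summed over indices is proportional to $\sumig\sumjh\sumkm\cxijkw$, which vanishes by the centering convention and produces the block-diagonal interior structure of $\bA_3$. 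The interaction block is dispatched identically, with scaling $(gh)^{-1/2}$ and $\gamma_{ij}$ in place of $e_{ijk}$, yielding $\bA_2$.

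For the row-column block I will first rewrite the components using $\bxai=\cxia+\barbxa$ together with the algebraic identities $1/\dsiga^2-\eta\dsigb^2/(\dot\tau\dsiga^2)=1/\dot\tau$ and $1/\dsigb^2-\dsiga^2/(\dot\tau\dsigb^2)=\eta/\dot\tau$, to obtain
\begin{align*}
\bphi_{\bxi_1}&=\frac{1}{\dsiga^2}\sumig\cxia\alpha_i+\frac{1}{\dot\tau}\barbxa\sumig\alpha_i+\frac{\eta}{\dot\tau}\barbxa\sumjh\beta_j,\\
\bphi_{\bxi_2}&=\frac{1}{\dsigb^2}\sumjh\cxjb\beta_j+\frac{\eta}{\dot\tau}\barbxb\sumjh\beta_j+\frac{1}{\dot\tau}\barbxb\sumig\alpha_i,
\end{align*}
which exhibits every row-column component as a linear combination of an $\alpha$-group $(\sumig\alpha_i,\sumig\cxia^T\alpha_i,\sumig(\alpha_i^2-\dsiga^2))^T$ and an independent $\beta$-group defined analogously. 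Applying the multivariate Cramér--Wold/Lyapunov CLT to $g^{-1/2}$ times the $\alpha$-group, with $4+\delta$ moments on $\alpha_1$ and $2+\delta$ moments on $\cxia$, yields joint convergence to a Gaussian with covariance $\diag(\dsiga^2,\dsiga^2\bD_1,\E\alpha_1^4-\dsiga^4)$ plus the off-diagonal entry $\E\alpha_1^3$ linking the first and third coordinates; the $\cxia$ cross-covariance vanishes after normalization because $\sumig\cxia=\bzero$. The analogous statement holds for $h^{-1/2}$ times the $\beta$-group. Combining through the linear expressions above and using $g^{1/2}\eta/h^{1/2}=\eta^{1/2}$ produces the $\eta^{1/2}$ factors appearing in $\bA_1$ and, after routine bookkeeping, the stated $\bA_1$.

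The main obstacle I expect is the bookkeeping in assembling $\bA_1$: one must compute and verify all cross-covariances among the $p_a+p_b+3$ row-column coordinates against the listed matrix, carefully tracking the $\eta^{1/2}$ factors arising from the mismatch between the $g^{-1/2}$ and $h^{-1/2}$ normalizations, and correctly locating the third-moment terms $\E\alpha_1^3$ and $\E\beta_1^3$. The CLT verification itself is routine once reduced via Cramér--Wold to a one-dimensional Lyapunov condition, which is supplied directly by Conditions A3--A4 with $\delta'=\delta/2$ and Hölder's inequality for the mixed moments.
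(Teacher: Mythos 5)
Your proposal is correct and follows essentially the same route as the paper: split $\bphi$ according to the mutually independent noise families into the row--column, interaction and within-cell blocks (which gives $\bA=\diag(\bA_1,\bA_2,\bA_3)$ immediately), then establish each block's CLT via the Cram\'er--Wold device with a Lyapunov condition supplied by Conditions A3--A4. The only organizational difference is in the mixed row--column block, where the paper sums doubly-indexed terms $\bphi_{ij}$ over $(i,j)$ while you express the block as a fixed linear map of two independent single-index sums (your $\alpha$- and $\beta$-groups) --- an equivalent and arguably cleaner bookkeeping that yields the same $\eta^{1/2}$ factors and the same $\bA_1$ (note only that the conversion identity should read $\eta h^{1/2}/g^{1/2}=\eta^{1/2}$ rather than $g^{1/2}\eta/h^{1/2}=\eta^{1/2}$).
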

\begin{proof}
	
	
Consider a partition of $\bphi$ into the following components: $\bphi^{(a)}$, which contains $p_a + 2$ elements corresponding to the between-row parameters; $\bphi^{(b)}$, containing $p_b + 1$ elements corresponding to the between-column parameters; $\bphi^{(ab)}$, encompassing $p_{ab} + 1$ elements corresponding to the between-cell parameters; and $\bphi^{(w)}$, including $p_w + 1$ elements corresponding to the within-cell parameters.

	We aim to demonstrate that $(g^{-1/2}\bphi^{(a)T},h^{-1/2}\bphi^{(b)T})^T \xrightarrow{D} N(\bzero, \bA_1)$, $ (gh)^{-1/2}\bphi^{(ab)} \xrightarrow{D} N(\bzero, \bA_2)$, and $n^{-1/2}\bphi^{(w)} \xrightarrow{D} N(\bzero, \bA_3)$. The results subsequently follow from the independence of $(\bphi^{(a)T}, \bphi^{(b)T})^T$, $\bphi^{(ab)}$, and $\bphi^{(w)}$. 
	
	We let   $(\bphi^{(a)T},\bphi^{(b)T})^T =\sumig\sumjh (\bphi_{ij}^{(a)T},\bphi_{ij}^{(b)T})^T$, with the summands \\
 $\bphi_{ij}^{(a)}=h^{-1}(\phi_{ij\xi_0},\bphi_{ij\bxi_1}^T,\phi_{ij\sigma_{\alpha}^2})^T$
	and $\bphi_{ij}^{(b)}=g^{-1}(\bphi_{ij\bxi_2}^T,\phi_{ij\sigma_{\beta}^2})^T$, and let $\ba=(a_1,\ba_2^T,a_3,\ba_4^T,a_4)^T$ be a fixed $(p_a+p_b+3)$-vector satisfying $\ba^T\ba=1$. Then $\ba^T  (g^{-1/2}\bphi^{(a)T},h^{-1/2}\bphi^{(b)T})^T$ is a sum of independent scalar random variables with mean zero and finite variance. As $g,h\to\infty$, $\var[\ba^T (g^{-1/2}\bphi^{(a)T},h^{-1/2}\bphi^{(b)T})^T]\rightarrow\ba^T\bA_{1}\ba$.   
	Moreover, according to Conditions A3 and A4, and  the absolute moment inequality \cite{von1965inequalities}, there exists a positive finite value $\delta$ , such that 
	\begin{align*}
		&\sumig\sumjh \E|\ba^T  (g^{-1/2}\bphi_{ij}^{(a)T}, h^{-1/2}\bphi_{ij}^{(b)T})^T|^{2+\delta}\\&
		\le 5^{1+\delta} g^{-1-\delta/2}h^{-2-\delta}\sumig\sumjh \{
   \E |a_1\phi_{ij\xi_0}|^{2+\delta}+  \E |\ba_2^T\bphi_{ij\bxi_1}|^{2+\delta}+  \E |a_3\phi_{ij\sigma_{\alpha}^2}|^{2+\delta}\}\\&\quad
  +5^{1+\delta} g^{-2-\delta} h^{-1-\delta/2}\sumig\sumjh \{\E |\ba_4^T\bphi_{ij\bxi_2}|^{2+\delta}+ \E |a_5\phi_{ij\sigma_{\beta}^2}|^{2+\delta}\} 
	\end{align*}
	From Equation (\ref{eq:approx}), we have
	\begin{align*}
		&\phi_{ij\xi_0}=\frac{1}{\sigmacom}(\alpha_i+\beta_j),
		\qquad \bphi_{ij\bxi_1}=\frac{1}{\dsiga^2}\cxia\alpha_i+\frac{\barbxa}{\sigmacom}(\alpha_i+\beta_j),
\qquad
		\phi_{ij\sigma_{\alpha}^2}=\frac{1}{2\dot{\sigma}_\alpha^4}(\alpha_i^2-\dot{\sigma}_\alpha^2),\\
		& \bphi_{ij\bxi_2}=\frac{1}{\dsigb^2}\cxjb\beta_j+\frac{\eta\barbxb}{\sigmacom}(\alpha_i+\beta_j),
		\qquad
		 \phi_{ij\sigma_{\beta}^2}=\frac{1}{2\dot{\sigma}_\beta^4}(\beta_j^2-\dot{\sigma}_\beta^2).
	\end{align*}
	We then obtain
	\begin{align*}
		&\E |a_1\phi_{ij\xi_0}|^{2+\delta}\le  2^{1+\delta}\dot{\tau}^{-2-\delta}(\E|\alpha_1|^{2+\delta}+\E|\beta_1|^{2+\delta}),
		\\&\E |\ba_2^T\bphi_{ij\bxi_1}|^{2+\delta}\le 3^{1+\delta}\{\dsiga^{-4-2\delta}|\cxia|^{2+\delta}\E|\alpha_1|^{2+\delta}+\dot{\tau}^{-2-\delta}|\barbxa|^{2+\delta}(\E|\alpha_1|^{2+\delta}+\E|\beta_1|^{2+\delta})\},
		\\&\E |a_3\phi_{ij\sigma_{\alpha}^2}|^{2+\delta}\le 2^{1+\delta}(2\dot{\sigma}_{\alpha})^{-8-4\delta}(\E|\alpha_1|^{4+2\delta}+\dot{\sigma}_{\alpha}^{4+2\delta}),
		\\&\E |\ba_4^T\bphi_{ij\bxi_2}|^{2+\delta}\le 3^{1+\delta}\{
		\dsigb^{-4-2\delta}|\cxjb|^{2+\delta}\E|\beta_1|^{2+\delta}+\dot{\tau}^{-2-\delta}\eta^{2+\delta}|\barbxb|^{2+\delta}(\E|\alpha_1|^{2+\delta}+\E|\beta_1|^{2+\delta})\},
		\\&\E |a_5\phi_{ij\sigma_{\beta}^2}|^{2+\delta}\le 2^{1+\delta}(2\dot{\sigma}_{\beta})^{-8-4\delta}(\E|\beta_1|^{4+2\delta}+\dot{\sigma}_{\beta}^{4+2\delta}).
	\end{align*}
	Thus, by Conditions A3 and A4, we deduce
	\begin{align*}
		(\ba^T\bA_{1}\ba)^{-2-\delta} \sumig\sumjh \E|\ba^T \bK_1^{-1/2}(\bphi_{ij}^{(a)T},\bphi_{ij}^{(b)T})^T|^{2+\delta}=o_p(1),
	\end{align*}
where $\bK_1=\diag(g\bI_{p_a+2},h\bI_{p_b+1})$, 	
 and Lyapunov's condition is satisfied. 
	As a result, $\ba^T \bK_1^{-1/2}(\bphi^{(a)T},\bphi^{(b)T})^T$ converges to $N(0, \boldsymbol{a}^T\boldsymbol{A}_{1}\boldsymbol{a})$ as $g \to \infty$, and  the conclusion follows from the Cramer-Wold device \cite[p49]{billingsley1968convergence}.	
	The  proof that $ (gh)^{-1/2}\bphi^{(ab)} \xrightarrow{D} N(\bzero, \bA_2)$ as  $g, h\to \infty$, and $n^{-1/2}\bphi^{(w)} \xrightarrow{D} N(\bzero, \bA_3)$  as $g,h,m \to \infty$, are similar. Details can be found in Supplementary Section S.6.2.
\end{proof}
\begin{lma}\label{lma3}
	Suppose Condition A holds. Then, there is a finite constant $L$ such that
	\begin{align*}
		&	\underset{\bomega\in \boldsymbol{N}}{\sup} \var\{l_{\xi_0}(\bomega)-l_{\xi_0}(\dot\bomega)\}\le L,
		&&	\underset{\bomega\in \boldsymbol{N}}{\sup} \var\{l_{\xi_{1q}}(\bomega)-l_{\xi_{1q}}(\dot\bomega)\}\le L,
		&&	\underset{\bomega\in \boldsymbol{N}}{\sup} \var\{l_{\sigma_{\alpha}^2}(\bomega)-l_{\sigma_{\alpha}^2}(\dot\bomega)\}\le L,\\
		&	\underset{\bomega\in \boldsymbol{N}}{\sup} \var\{l_{\xi_{2r}}(\bomega)-l_{\xi_{2r}}(\dot\bomega)\}\le L,
		&&	\underset{\bomega\in \boldsymbol{N}}{\sup} \var\{l_{\sigma_{\beta}^2}(\bomega)-l_{\sigma_{\beta}^2}(\dot\bomega)\}\le L,
		&&	\underset{\bomega\in \boldsymbol{N}}{\sup} \var\{l_{\xi_{3s}}(\bomega)-l_{\xi_{3s}}(\dot\bomega)\}\le L,\\
		&	\underset{\bomega\in \boldsymbol{N}}{\sup} \var\{l_{\sigma_{\gamma}^2}(\bomega)-l_{\sigma_{\gamma}^2}(\dot\bomega)\}\le L,
		&&	\underset{\bomega\in \boldsymbol{N}}{\sup} \var\{l_{\xi_{4t}}(\bomega)-l_{\xi_{4t}}(\dot\bomega)\}\le L,
		&&	\underset{\bomega\in \boldsymbol{N}}{\sup} \var\{l_{\sigma_e^2}(\bomega)-l_{\sigma_e^2}(\dot\bomega)\}\le L,
	\end{align*}
	for $q=1,\ldots,p_a$, $r=1.\ldots,p_b$, $s=1,\ldots,p_{ab}$, $t=1,\ldots,p_w$.
\end{lma}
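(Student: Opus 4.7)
The plan is to show the uniform bound for each of the nine scalar components of $\bpsi$ displayed in \eqref{mlequation} by isolating the randomness in objects with known moment structure and then matching rates. First I would use a residual–perturbation identity: on $\boldsymbol{N}$ every $R$-type residual evaluated at $\bomega$ equals its value at $\dot\bomega$ plus a deterministic perturbation linear in $\bomega-\dot\bomega$. Explicitly, $\bar R(\bomega)=\dot{\bar R}-(\xi_0-\dot\xi_0)-\barbxat(\bxi_1-\dot\bxi_1)-\barbxbt(\bxi_2-\dot\bxi_2)-\barbxabt(\bxi_3-\dot\bxi_3)-\barbxwt(\bxi_4-\dot\bxi_4)$, and the analogous decompositions hold for $\barRia(\bomega)$, $\barRjb(\bomega)$, $\barRijab(\bomega)$, and $\barRijkw(\bomega)$. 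Each $\lambda_k(\bomega)$ likewise differs from $\dot\lambda_k$ by a deterministic linear function of $(\sigma_\alpha^2-\dsiga^2,\,\sigma_\beta^2-\dsigb^2,\,\sigma_\gamma^2-\dsiggama^2,\,\sige^2-\dsige^2)$.

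Next I would substitute these decompositions into each $l_\theta(\bomega)-l_\theta(\dot\bomega)$ and collect terms. Scores that are linear in residuals (namely $l_{\xi_0}$ and the $l_{\bxi_s}$, $s=1,\ldots,4$) collapse to $[c(\bomega)-c(\dot\bomega)]\dot R+(\text{deterministic})$, whose variance is $[c(\bomega)-c(\dot\bomega)]^2\var(\dot R)$. For the variance-component scores the squared-residual pieces expand as $\dot R^2+2\dot R\Delta+\Delta^2$, so only the $\dot R^2$ and $\dot R\Delta$ parts feed the variance. I would then bound each coefficient by controlling $|\lambda_k^{-a}(\bomega)-\dot\lambda_k^{-a}|$ via the Lipschitz bounds implied by $|\sigma_\alpha^2-\dsiga^2|\le Mg^{-1/2}$, $|\sigma_\beta^2-\dsigb^2|\le Mh^{-1/2}$, $|\sigma_\gamma^2-\dsiggama^2|\le M(gh)^{-1/2}$, and $|\sige^2-\dsige^2|\le Mn^{-1/2}$, combined with the uniform comparison $\lambda_k(\bomega)\asymp\dot\lambda_k$ on $\boldsymbol{N}$ that holds once $g,h,m$ are large. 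The variances of $\dot{\bar R}$, $\dot{\barRia}$, $\dot{\barRjb}$, $\dot{\barRijab}$, and $\dot{\barRijkw}$ are explicit sums of independent variance-component contributions, while the fourth-moment-type quantities $\var(\dot R^2)$ are finite by Condition A3, and the covariate norms appearing in $\Delta$ are controlled by Condition A4.

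Once these ingredients are assembled, matching the shrinking rates of the coefficients to the growth rates of the variances shows that every contributing term is $O(1)$ uniformly on $\boldsymbol{N}$. As an illustration, $\var\{l_{\xi_0}(\bomega)-l_{\xi_0}(\dot\bomega)\}=(n/\lambda_4-n/\dot\lambda_4)^2\var(\dot{\bar R})$; on $\boldsymbol{N}$ the shrinkage of $|\lambda_4-\dot\lambda_4|$ and the decay of $\var(\dot{\bar R})$ combine to leave an $O(1)$ remainder. The main obstacle is bookkeeping rather than a single hard inequality: $l_{\sigma_\gamma^2}$ and $l_{\sige^2}$ each contain four squared-residual blocks at different hierarchical levels, each paired with its own $\lambda_k^{-2}$ factor, and the case $0<\eta<\infty$ makes the row and column blocks mutually relevant, so checking that every cross-product contributes only $O(1)$ requires carefully tracking which rate dominates in each block. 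Taking $L$ to be the maximum over the finitely many resulting $O(1)$ bounds yields the uniform constant asserted in the lemma.
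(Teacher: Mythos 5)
Your proposal is correct and follows essentially the same route as the paper: decompose each residual at $\bomega$ into its value at $\dot\bomega$ plus a deterministic perturbation, observe that only the terms pairing coefficient differences (or, for the variance components, the $\dot R\Delta$ cross terms) with true-parameter residuals contribute to the variance, and then match the shrinkage of $|\lambda_k^{-1}(\bomega)-\dot\lambda_k^{-1}|$ on $\boldsymbol{N}$ against the decay of the residual variances, exactly as in the paper's worked $l_{\xi_0}$ example. The remaining components are handled by the same bookkeeping, with fourth moments supplied by Condition A3 and covariate norms by Condition A4, as you indicate.
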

\begin{proof}
	
	For $g,h,m$ sufficiently large,  uniformly on $\bomega\in \boldsymbol{N}$, we have 
	\begin{align*}
		&mL_1 \le \lambda_1\le mU_1, &&mL_1\le \dlambda_1\le mU_1,\\
		&hmL_2 \le\lambda_2\le hmU_2, &&hm_L2 \le \dlambda_2\le hmU_2,\\
		&gmL_3 \le \lambda_3\le gmU_3, &&gmL_3 \le \dlambda_3\le gmU_3, \\
		&(g+h)mL_4 \le \lambda_4\le (g+h)mU_4, &&(g+h)mL_4 \le \dlambda_4\le (g+h)mU_4,
	\end{align*}
	where $U_i, L_i\ge 0$ are fixed constants for $i=1,2,3,4$. 
	In order to mitigate the proliferation of constant notation in the following calculation, we designate $M_0$ as a finite constant that is not necessarily the same at each appearance. Then we have 
	\begin{align*}
		&({1}/{\lambda_1}-{1}/{\dlambda_1})^2\le  (gh)^{-1}m^{-2}M_0,
		\qquad({1}/{\lambda_2}-{1}/{\dlambda_2})^2 \le g^{-1}h^{-2}m^{-2}M_0,\\
		&({1}/{\lambda_3}-{1}/{\dlambda_3})^2\le g^{-2}h^{-1}m^{-2}M_0,
		\qquad({1}/{\lambda_4}-{1}/{\dlambda_4})^2\le g^{-1}h^{-2}m^{-2}M_0 \text{ (or }\le g^{-2}h^{-1}m^{-2}M_0 ),\\
		&\var [(\alpha_i-\bar\alpha)+(\gammaid-\gammadd)+(\eidd-\eddd)]\le M_0,
		\qquad\var[(\beta_j-\bar\beta)+(\gammadj-\gammadd)+(\edjd-\eddd)] \le M_0,\\
		&\var[(\gamma_{ij}-\gammaid-\gammadj+\gammadd)+(\eijd-\eidd-\edjd+\eddd)]\le M_0,
		\qquad\var(\bar\alpha +\bar\beta+\bar\gamma+\bar{e})\le g^{-1}M_0 \text{ (or }\le h^{-1}M_0),\\
		&\var [\{(\alpha_i-\bar\alpha)+(\gammaid-\gammadd)+(\eidd-\eddd)\}^2]\le M_0,
		\qquad\var [\{(\beta_j-\bar\beta)+(\gammadj-\gammadd)+(\edjd-\eddd)\}^2]\le M_0,\\
		&\var [\{ (\gamma_{ij}-\gammaid-\gammadj+\gammadd)+(\eijd-\eidd-\edjd+\eddd)\}^2]\le M_0,
		\qquad\var[(e_{ijk}-\eijd)^2]\le M_0,\\
		&\var	[(\bar\alpha +\bar\beta+\bar\gamma+\bar{e})^2 ] \le g^{-2}M_0\text{ (or }\le h^{-2}M_0).
	\end{align*}
	For example, we have 
	\begin{align*}
		l_{\xi_0}(\bomega)-	l_{\xi_0}(\dot\bomega)&=\frac{n}{\lambda_4}[(\dot{\xi}_0-\xi_0)+\barbxat(\dot\bxi_1-\bxi_1)+\barbxbt(\dot\bxi_2-\bxi_2)+\barbxabt(\dot\bxi_3-\bxi_3)+\barbxwt(\dot\bxi_4-\bxi_4)]\\&
		+
		(\frac{n}{\lambda_4}-\frac{n}{\dot\lambda_4})(\bar\alpha +\bar\beta+\bar\gamma+\bar{e}),
	\end{align*}
	and so
	\begin{align*}
		\var[l_{\xi_0}(\bomega)-	l_{\xi_0}(\dot\bomega)]		=
		(\frac{n}{\lambda_4}-\frac{n}{\dot\lambda_4})^2\var(\bar\alpha +\bar\beta+\bar\gamma+\bar{e})\le M_0^2\le L.
	\end{align*}
The argument for each of the remaining term is similar and details can be found in Supplementary Section S.6.3.
\end{proof}
\subsection{Lemmas for the derivative function}\label{proof3}
To prove the main theorem, we use the mean value theorem to linearly approximate $\bpsi(\bomega)$. Each element of $\bpsi(\bomega)$ is evaluated using a different value of $\bomega$ in each row of the derivative matrix. We use a square matrix $\bOmega$ with dimensions $(p_a+p_b+p_{ab}+p_w+5)\times (p_a+p_b+p_{ab}+p_w+5)$ to evaluate the derivative matrix, denoted as $\triangledown\bpsi(\bOmega)$, where each row is evaluated at the corresponding row of $\bOmega$. We partition $\triangledown\bpsi(\bOmega)$ into sub-matrices corresponding to between-row, between-column, between-cell and within-cell parameters. These sub-matrices are $\bOmega^{(a)}$, $\bOmega^{(b)}$, $\bOmega^{(ab)}$, and $\bOmega^{(w)}$, respectively, with dimensions $(p_a+2)\times(p_a+p_b+p_{ab}+p_w+5)$, $(p_b+1)\times(p_a+p_b+p_{ab}+p_w+5)$, $(p_{ab}+1)\times(p_a+p_b+p_{ab}+p_w+5)$, and $(p_w+1)\times(p_a+p_b+p_{ab}+p_w+5)$. We write
\begin{displaymath}
	\triangledown\bpsi (\bOmega)=
	\begin{bmatrix}
		\triangledown\bpsi ^{(a),(a)}(\bOmega^{(a)})&\triangledown\bpsi ^{(a),(b)}(\bOmega^{(a)})&\triangledown\bpsi ^{(a),(ab)}(\bOmega^{(a)})&\triangledown\bpsi ^{(a),(w)}(\bOmega^{(a)})\\
		\triangledown\bpsi ^{(b),(a)}(\bOmega^{(b)})&\triangledown\bpsi ^{(b),(b)}(\bOmega^{(b)})&\triangledown\bpsi ^{(b),(ab)}(\bOmega^{(b)})&\triangledown\bpsi ^{(b),(w)}(\bOmega^{(b)})\\
		\triangledown\bpsi ^{(ab),(a)}(\bOmega^{(ab)})&\triangledown\bpsi ^{(ab),(b)}(\bOmega^{(ab)})&\triangledown\bpsi ^{(ab),(ab)}(\bOmega^{(ab)})&\triangledown\bpsi ^{(ab),(w)}(\bOmega^{(ab)})\\
		\triangledown\bpsi ^{(w),(a)}(\bOmega^{(w)})&\triangledown\bpsi ^{(w),(b)}(\bOmega^{(w)})&\triangledown\bpsi ^{(w),(ab)}(\bOmega^{(w)})&\triangledown\bpsi ^{(w),(w)}(\bOmega^{(w)})
	\end{bmatrix}.
\end{displaymath}
The off-diagonal submatrices, like $\triangledown\bpsi ^{(a),(b)}$ and $\triangledown\bpsi ^{(b),(a)}$, may have different arguments. However, they are transposes of each other when the arguments are the same. When all rows of $\bOmega$ are equal to $\bomega^T$, we simplify the notation by using $\bomega$ instead of $\bOmega$ and its submatrices, discarding the transpose, and using $\bomega$ as a generic symbol to represent any row of $\bOmega$ when the specific row is not important.

\begin{lma}\label{lma Bn}
	Suppose Condition A holds. Then as $g,h,m\to\infty$, $\|\bB-\bB_n\|=o(1)$, where   $\bB_n=-\bK^{-1/2}\E\triangledown\bpsi(\dot\bomega)\bK^{-1/2}$ and 
	\begin{align}\label{MatrixB}
\bB=\begin{bmatrix}
	\bB_{1(a),(a)}&\bB_{1(a),(b)}&\bzero_{[(p_{a}+2):(p_{ab}+1)]}&\bzero_{[(p_{a}+2):(p_{w}+1)]}\\
	\bB_{1(b),(a)}&\bB_{1(b),(b)}&\bzero_{[(p_{b}+1):(p_{ab}+1)]}&\bzero_{[(p_{b}+1):(p_{w}+1)]}\\
	\bzero_{[(p_{ab}+1):(p_{a}+2)]}&\bzero_{[(p_{ab}+1):(p_b+1)]}&\bB_2&\bzero_{[(p_{ab}+1):(p_{w}+1)]}\\
	\bzero_{[(p_{w}+1):(p_{a}+2)]}&\bzero_{[(p_{w}+1):(p_b+1)]}&\bzero_{[(p_{w}+1):(p_{ab}+1)]}&\bB_3
\end{bmatrix},
\end{align}
with 
\begin{align*}
&	\bB_{1(a),(a)}=
1/\dot{\tau}\begin{bmatrix}
	1&{\barbxat}&0 \\
	{\barbxa}&{\dot{\tau}\bD_1}/{\dot{\sigma}_{\alpha}^2}+{\barbxa\barbxat}&\bzero_{[p_a:1]}\\
	0&\bzero_{[1:p_a]}&{\dot{\tau}}/{(2\dot{\sigma}_\alpha^4)}
\end{bmatrix},
\hspace{1ex}	\bB_{1(a),(b)}=	\bB_{1(b),(a)}^T=1/\dot{\tau}
\begin{bmatrix}
	{\eta^{1/2}\barbxbt}&0\\
	{\eta^{1/2}\barbxa\barbxbt}&\bzero_{[p_a:1]}\\
	\bzero_{[1:p_b]}&0
\end{bmatrix},
\\
&	\bB_{1(b),(b)}=\begin{bmatrix}
	{\bD_2}/{\dot{\sigma}_{\beta}^2}+{\eta\barbxb\barbxbt}/{\dot{\tau}}&\bzero_{[p_b:1]}\\
	\bzero_{[1:p_b]}&{1}/{(2\dot{\sigma}_\beta^4)}
\end{bmatrix},
\hspace{1ex}	\bB_2=\begin{bmatrix}
	{\bD_3}/{\dot{\sigma}_{\gamma}^2}&\bzero_{[p_{ab}:1]}\\
	\bzero_{[1:p_{ab}]}&{1}/{(2\dot{\sigma}_\gamma^4)}
\end{bmatrix}
\hspace{1ex}\text{and}\hspace{1ex}
	\bB_3=\begin{bmatrix}
	{\bD_4}/{\dot{\sigma}_{e}^2}&\bzero_{[p_{w}:1]}\\
	\bzero_{[1:p_{w}]}&{1}{(2\dot{\sigma}_e^4)}
\end{bmatrix}.
\end{align*}
We have $\bA=\bB$ under normality, but not otherwise.  
\end{lma}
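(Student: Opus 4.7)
The plan is to compute $\bB_n$ block by block directly from the derivatives of the estimating functions in (\ref{mlequation}) (tabulated in Supplementary Section S.5), take expectations under $\dot\bomega$, and pass to the limit using three ingredients: (i) the asymptotic rates of the true eigenvalues, $\dlambda_0=\dsige^2$, $\dlambda_1/m\to\dsiggama^2$, $\dlambda_2/(hm)\to\dsiga^2$, $\dlambda_3/(gm)\to\dsigb^2$, and $\dlambda_4/(hm)\to\dsiga^2+\eta\dsigb^2=\dot\tau$ when $\eta=g/h$ is finite; (ii) the covariate moment limits $\hat\bD_i\to\bD_i$ from Condition A4; and (iii) the exact variance identities at $\dot\bomega$, namely $\E[\bar R]=0$, $\E[\bar R^2]=\dlambda_4/n$, $\E\sum_i\barRia^2=(g-1)\dlambda_2/(hm)$, and the analogues for $\barRjb$, $\barRijab$, and $R_{ijk}^{(w)}$; these last identities are the spectral decomposition of $\bV$ corresponding to the projections $\bC_g\otimes\bar\bJ_h\otimes\bar\bJ_m$, $\bar\bJ_g\otimes\bC_h\otimes\bar\bJ_m$, $\bC_g\otimes\bC_h\otimes\bar\bJ_m$, and $\bI_g\otimes\bI_h\otimes\bC_m$.

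For the nonzero entries the computation is direct. For example, $-\E\partial l_{\xi_0}/\partial\xi_0=n/\dlambda_4$ normalizes to $hm/\dlambda_4\to 1/\dot\tau$, giving the $(1,1)$ entry of $\bB_{1(a),(a)}$; $-\E\partial l_{\bxi_1}/\partial\bxi_1^T$ normalizes to $(hm/\dlambda_2)\hat\bD_1+(hm/\dlambda_4)\barbxa\barbxat\to\bD_1/\dsiga^2+\barbxa\barbxat/\dot\tau$; and the $(\sigma_\alpha^2,\sigma_\alpha^2)$ entry, after substituting the variance identities, collapses to $\{(g-1)h^2m^2/\dlambda_2^2+h^2m^2/\dlambda_4^2\}/(2g)\to 1/(2\dsiga^4)$. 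The $(a,b)$ off-diagonal block is driven purely by $\lambda_4$, the only eigenvalue depending on both $\sigma_\alpha^2$ and $\sigma_\beta^2$; with normalizing factor $(gh)^{-1/2}$ the relevant terms become $(gh)^{1/2}m/\dlambda_4\to\eta^{1/2}/\dot\tau$, producing the $\eta^{1/2}$-weighted entries of $\bB_{1(a),(b)}$. The blocks $\bB_2$ and $\bB_3$ arise analogously from $m/\dlambda_1\to 1/\dsiggama^2$ and $1/\dlambda_0=1/\dsige^2$, combined with $\hat\bD_3\to\bD_3$ and $\hat\bD_4\to\bD_4$.

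The entries that must vanish are of two types. Cross slope--variance entries such as $-\E\partial l_{\xi_0}/\partial\sigma_\alpha^2$ and $-\E\partial l_{\bxi_1}/\partial\sigma_\alpha^2$ reduce to constants times $\E[\bar R]$, $\E[\barRia]$, $\E[\barRjb]$, $\E[\barRijab]$, or $\E[R_{ijk}^{(w)}]$, all zero by Condition A3. Cross variance--variance entries such as $-\E\partial l_{\sigma_\gamma^2}/\partial\sigma_\alpha^2$ produce leading terms of the form (rate)${}\times{}$(variance of a residual) that cancel exactly against the deterministic offsets like $-(g-1)hm/(2\lambda_2)$ appearing in (\ref{mlequation}); the surviving remainders carry an extra factor of $g^{-1}$ or $h^{-1}$ and so vanish after normalization. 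The cross $(a,ab)$, $(a,w)$, $(b,ab)$, $(b,w)$, and $(ab,w)$ blocks vanish either because the relevant sums of centered covariates are zero by construction (e.g.\ $\sum_i\cxia=\bzero$, $\sum_{i,j}\cxijab=\bzero$) or because the normalization introduces an extra factor of $h^{-1/2}$, $(gh)^{-1/2}$, or $n^{-1/2}$ that drives the limit to zero.

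The main obstacle is the bookkeeping across the four-by-four block structure and tracking rates separately in the regimes $\eta=0$, $0<\eta<\infty$, and $\eta=\infty$: the $\eta^{1/2}$ factors in $\bB_{1(a),(b)}$ survive only for $\eta\in(0,\infty)$, degenerate to zero for $\eta=0$, and for $\eta=\infty$ the row--column role swap described after Theorem \ref{theorem1} must be invoked. Finally, the concluding assertion $\bA=\bB$ under normality is the second Bartlett identity applied to the working-Gaussian log-likelihood (\ref{lglik}): when the random effects and errors are genuinely normal, the score covariance equals the negative expected Hessian. The discrepancy for non-normal distributions is entirely due to the third- and fourth-cumulant contributions appearing in $\bA$ through the quadratic components of $\bphi$ (terms involving $\E\alpha_1^3$, $\E\alpha_1^4-\dsiga^4$, and their analogues), which $\bB$ does not see since it only involves the second-moment structure through the Hessian.
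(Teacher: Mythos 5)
Your proposal is correct and follows essentially the same route the paper takes (the main text defers the details to Supplementary S.6.4, but the argument can only be the direct computation of $-\bK^{-1/2}\E\triangledown\bpsi(\dot\bomega)\bK^{-1/2}$ block by block, using the rates $\dlambda_2/(hm)\to\dsiga^2$, $\dlambda_3/(gm)\to\dsigb^2$, $\dlambda_4/(hm)\to\dot\tau$, the variance identities $\E\bar R^2=\dlambda_4/n$, $\E\sum_i\barRia^2=(g-1)\dlambda_2/(hm)$, etc., and the limits $\hat\bD_i\to\bD_i$ from Condition A4, exactly as you do). Your sample entries check out, the mechanisms you give for the vanishing blocks (zero means of residuals, half-cancellation of the quadratic terms against the deterministic offsets, and mismatched normalization rates) are the right ones, and the closing remark that $\bA=\bB$ under normality is correctly identified as the second Bartlett identity for the working Gaussian likelihood.
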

\begin{proof}
	The detailed proof is given in {Supplementary S.6.4}.
 \end{proof}
\begin{lma}\label{lma5}
	Suppose Condition A holds. Then, as $g,h,m\rightarrow \infty$,
	\begin{align*}
		\underset{\bomega\in \boldsymbol{N}}{\sup}|\bK^{-1/2} [\E\triangledown\bpsi(\bOmega)-\E\triangledown\bpsi(\dot\bomega)]\bK^{-1/2}|=o(1),
	\end{align*}
	where the rows of $\bOmega$ are possibly different but lie in between $\bomega$ and $\dot\bomega$.
\end{lma}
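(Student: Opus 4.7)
The plan is to reduce the uniform bound to a block-by-block Lipschitz estimate, exploiting the fact that the neighbourhood $\boldsymbol{N}=\{\bomega:|\bK^{1/2}(\bomega-\dot\bomega)|\le M\}$ shrinks in each coordinate at exactly the rate dictated by $\bK$. Since $\bK^{-1/2}[\E\triangledown\bpsi(\bOmega)-\E\triangledown\bpsi(\dot\bomega)]\bK^{-1/2}$ is a matrix of fixed dimension $(p+5)\times(p+5)$, it is enough to prove that each entry converges to zero uniformly on $\boldsymbol{N}$.

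First I would compute $\E\triangledown\bpsi(\bomega)$ explicitly from (\ref{mlequation}), using the same calculations as in the derivation of $\bB_n$ in Lemma~\ref{lma Bn}. Each entry is a finite sum of terms of the form $c\, \lambda_i^{-a_i}\lambda_j^{-a_j}$ multiplied by averages of the covariate vectors at the corresponding level (e.g.\ $g^{-1}\sum_i \cxia\cxiat$, $h^{-1}\sum_j \cxjb\cxjbt$, etc.), where $c$ is a polynomial in the variance components and in $\bomega-\dot\bomega$. I would then pair each such entry with its normalising factors from $\bK^{-1/2}$ on both sides and check, block-by-block, that this produces an $O(1)$ expression whose $\bomega$-derivative, after multiplication by the appropriate $\sqrt{k_s}$, is also $O(1)$ uniformly on $\boldsymbol{N}$.

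The key observation driving the bound is that for $\bomega\in\boldsymbol{N}$ each coordinate satisfies $|\omega_s-\dot\omega_s|\le M/\sqrt{k_s}$, while the $\lambda_i$ change only multiplicatively by a factor $1+o(1)$ (uniformly in $\boldsymbol{N}$) because
\[
\lambda_i(\bomega)-\lambda_i(\dot\bomega)=(\sigma_e^2-\dot\sigma_e^2)+m(\sigma_\gamma^2-\dot\sigma_\gamma^2)+hm(\sigma_\alpha^2-\dot\sigma_\alpha^2)\mathbf{1}_{i\ge 2}+gm(\sigma_\beta^2-\dot\sigma_\beta^2)\mathbf{1}_{i\ge 3},
\]
and in $\boldsymbol{N}$ we have $|\sigma_\alpha^2-\dot\sigma_\alpha^2|\le M/\sqrt{g}$, $|\sigma_\beta^2-\dot\sigma_\beta^2|\le M/\sqrt{h}$, $|\sigma_\gamma^2-\dot\sigma_\gamma^2|\le M/\sqrt{gh}$ and $|\sigma_e^2-\dot\sigma_e^2|\le M/\sqrt{n}$, giving relative differences of order $(g\wedge h\wedge m)^{-1/2}$. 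Combining this with the Lipschitz behaviour in the mean and slope parameters (which only affect the polynomial $c$ linearly), a row-wise mean value theorem applied to each entry yields a difference of order $M(g\wedge h\wedge m)^{-1/2}=o(1)$.

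The main obstacle is the bookkeeping across the four parameter levels: the off-diagonal blocks of $\bK^{-1/2}\cdot\bK^{-1/2}$ involve products of differing rates (for instance $g^{-1/2}n^{-1/2}$ for row-within interactions), and the corresponding entries of $\E\triangledown\bpsi$ carry factors such as $n/\lambda_4$ or $hm/\lambda_2$ that must be checked to cancel at exactly the correct order so the resulting normalised partial derivative is $O(1)$. This is precisely the same accounting that makes $\bB_n\to\bB$ work in Lemma~\ref{lma Bn}; the present lemma is essentially its differentiated version. Once each of the finitely many normalised entries is verified to have a uniformly bounded derivative in $\boldsymbol{N}$ with respect to each coordinate, summation of the coordinate-wise estimates $\sqrt{k_s}^{-1}\cdot M$ gives $\sup_{\bomega\in\boldsymbol{N}}|\bK^{-1/2}[\E\triangledown\bpsi(\bOmega)-\E\triangledown\bpsi(\dot\bomega)]\bK^{-1/2}|=o(1)$, as required.
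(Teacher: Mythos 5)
Your proposal is correct and follows essentially the same route as the paper: a block-by-block (entry-wise) reduction using the fixed dimension of the matrix, combined with the observations that on $\boldsymbol{N}$ each coordinate satisfies $|\omega_s-\dot\omega_s|\le M k_s^{-1/2}$ and that the $\lambda_i$ consequently change only by relative factors $1+o(1)$, so that each normalized block difference of $\E\triangledown\bpsi$ is $o(1)$. The paper records the block-specific rates (e.g.\ $O(g^{-1/2})$ for the $(a),(a)$ block via $|hm/\lambda_2-hm/\dot\lambda_2|=O(g^{-1/2})$) rather than your coarser uniform bound $O((g\wedge h\wedge m)^{-1/2})$, but the argument is the same and the details are likewise deferred to the supplement.
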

\begin{proof}
We write
\begin{equation}\label{eq:lema4}
	\begin{split}
			& \underset{\bomega\in \boldsymbol{N}}{\sup}|\bK^{-1/2} [\E\triangledown\bpsi(\bOmega)-\E\triangledown\bpsi(\dot\bomega)]\bK^{-1/2}|\\&
		\le
		\underset{\bomega\in \boldsymbol{N}}{\sup}\bigg(g^{-1}| \E\triangledown\bpsi^{(a),(a)}(\bOmega^{(a)})-\E\triangledown\bpsi^{(a),(a)}(\dot\bomega)|
		+  (gh)^{-1/2}| \E\triangledown\bpsi^{(a),(b)}(\bOmega^{(a)})-\E\triangledown\bpsi^{(a),(b)}(\dot\bomega)|\\&
		+g^{-1}h^{-1/2}| \E\triangledown\bpsi^{(a),(ab)}(\bOmega^{(a)})-\E\triangledown\bpsi^{(a),(ab)}(\dot\bomega)|
		+g^{-1/2}n^{-1/2}| \E\triangledown\bpsi^{(a),(w)}(\bOmega^{(a)})-\E\triangledown\bpsi^{(a),(w)}(\dot\bomega)|\\&
		+ (gh)^{-1/2}| \E\triangledown\bpsi^{(b),(a)}(\bOmega^{(b)})-\E\triangledown\bpsi^{(b),(a)}(\dot\bomega)|
		+h^{-1}| \E\triangledown\bpsi^{(b),(b)}(\bOmega^{(b)})-\E\triangledown\bpsi^{(b),(b)}(\dot\bomega)|\\&
		+g^{-1/2}h^{-1}| \E\triangledown\bpsi^{(b),(ab)}(\bOmega^{(b)})-\E\triangledown\bpsi^{(b),(ab)}(\dot\bomega)|
		+h^{-1/2}n^{-1/2}| \E\triangledown\bpsi^{(b),(w)}(\bOmega^{(b)})-\E\triangledown\bpsi^{(b),(w)}(\dot\bomega)|\\&
		+g^{-1}h^{-1/2}| \E\triangledown\bpsi^{(ab),(a)}(\bOmega^{(ab)})-\E\triangledown\bpsi^{(ab),(a)}(\dot\bomega)|
		+g^{-1/2}h^{-1}| \E\triangledown\bpsi^{(ab),(b)}(\bOmega^{(ab)})-\E\triangledown\bpsi^{(ab),(b)}(\dot\bomega)|\\&
		+ (gh)^{-1}| \E\triangledown\bpsi^{(ab),(ab)}(\bOmega^{(ab)})-\E\triangledown\bpsi^{(ab),(ab)}(\dot\bomega)|
		+ (gh)^{-1}m^{-1/2}|\E\triangledown\bpsi^{(ab),(w)}(\bOmega^{(ab)})-\E\triangledown\bpsi^{(ab),(w)}(\dot\bomega)|\\&
		+g^{-1/2}n^{-1/2}| \E\triangledown\bpsi^{(w),(a)}(\bOmega^{(w)})-\E\triangledown\bpsi^{(w),(a)}(\dot\bomega)|
		+h^{-1/2}n^{-1/2}| \E\triangledown\bpsi^{(w),(b)}(\bOmega^{(w)})-\E\triangledown\bpsi^{(w),(b)}(\dot\bomega)|\\&
		+ (gh)^{-1}m^{-1/2}| \E\triangledown\bpsi^{(w),(ab)}(\bOmega^{(w)})-\E\triangledown\bpsi^{(w),(ab)}(\dot\bomega)|
		+n^{-1}| \E\triangledown\bpsi^{(w),(w)}(\bOmega^{(w)})-\E\triangledown\bpsi^{(w),(w)}(\dot\bomega)|\bigg).
	\end{split}
\end{equation}
We then have, for example,
	\begin{align*}
		&\underset{\bomega\in \boldsymbol{N}}{\sup}g^{-1}| \E\triangledown\bpsi^{(a),(a)}(\bOmega^{(a)})-\E\triangledown\bpsi^{(a),(a)}(\dot\bomega)|\\&
		\le\underset{\bomega\in \boldsymbol{N}}{\sup}g^{-1}\sumig| \E\triangledown\bpsi_i^{(a),(a)}(\bOmega^{(a)})-\E\triangledown\bpsi_i^{(a),(a)}(\dot\bomega)|\\&
		\le \underset{\bomega\in \boldsymbol{N}}{\sup}g^{-1}\sumig 
		[|\E l_{i\xi_0\xi_0}-\E l_{i\xi_0\xi_0}(\dot\bomega )|+|\E l_{i\xi_0\bxi_1^T}-\E l_{i\xi_0\bxi_1^T}(\dot\bomega )|+|\E l_{i\xi_0\siga^2}-\E l_{i\xi_0\siga^2}(\dot\bomega )|\\&\quad
		+
		|\E l_{i\bxi_1\xi_0}-\E l_{i\bxi_1\xi_0}(\dot\bomega )|+|\E l_{i\bxi_1\bxi_1^T}-\E l_{i\bxi_1\bxi_1^T}(\dot\bomega )|+|\E l_{i\bxi_1\siga^2}-\E l_{i\bxi_1\siga^2}(\dot\bomega )|\\&\quad
		+
		|\E l_{i\siga^2\xi_0}-\E l_{i\siga^2\xi_0}(\dot\bomega )|+|\E l_{i\siga^2\bxi_1^T}-\E l_{i\siga^2\bxi_1^T}(\dot\bomega )|+|\E l_{i\siga^2\siga^2}-\E l_{i\siga^2\siga^2}(\dot\bomega )|]\\&
		=O(g^{-1/2}),
	\end{align*}
	because  $0\le \eta<\infty$,  $|hm/\dlambda_2-hm/\lambda_2|=O(g^{-1/2})$, $|hm/\dlambda_4-hm/\lambda_4|=O(g^{-1/2})$, $|\dot\xi_0-\xi_0|=O(g^{-1/2})$, $|\dot\bxi_1-\bxi_1|=O(g^{-1/2})$, and 
	$|\dot\bxi_2-\bxi_2|=O(h^{-1/2})$ uniformly on $\bomega\in \boldsymbol{N}$.
Similarly, the remaining terms inside the parentheses on the right side of equation (\ref{eq:lema4}) converge to zero at their corresponding rates. Details can be found in {Supplementary Section S.6.5}.
\end{proof}

\newcommand{\secderA}{\{(\alpha_i-\bar\alpha)+(\gammaid-\gammadd)+(\eidd-\eddd)\}}
\newcommand{\secderB}{\{(\beta_j-\bar\beta)+(\gammadj-\gammadd)+(\edjd-\eddd)\}}
\newcommand{\secderAB}{\{(\gamma_{ij}-\gammaid-\gammadj+\gammadd)+(\eijd-\eidd-\edjd+\eddd)\}}
\newcommand{\secderW}{(e_{ijk}-\eijd)}
\newcommand{\secderS}{(\bar\alpha+\bar\beta+\gammadd+\eddd)}
\newcommand{\secderXA}{\{  (\bxai-\barbxa)^T(\dot\bxi_1-\bxi_1)+(\barbxiab-\barbxab)^T(\dot\bxi_3-\bxi_3)+(\barbxiw-\barbxw)^T(\dot\bxi_4-\bxi_4)\}}
\newcommand{\secderXB}{\{  (\bxbj-\barbxb)^T(\dot\bxi_2-\bxi_2)+(\barbxjab-\barbxab)^T(\dot\bxi_3-\bxi_3)+(\barbxjw-\barbxw)^T(\dot\bxi_4-\bxi_4)\}}
\newcommand{\secderXAB}{\{(\bxabij-\barbxiab-\barbxjab+\barbxab)(\dot\bxi_3-\bxi_3)+(\barbxijw-\barbxiw-\barbxjw+\barbxw)(\dot\bxi_4-\bxi_4)\}}
\newcommand{\secderXW}{\{  (\bxwijk-\barbxijw)^T(\dot\bxi_4-\bxi_4)\}}
\newcommand{\secderXS}{\{  (\dot\xi_0-\xi_0)+\barbxat(\dot\bxi_1-\bxi_1)+\barbxbt(\dot\bxi_2-\bxi_2)+(\barbxiab-\barbxab)^T(\dot\bxi_3-\bxi_3)+(\barbxiw-\barbxw)^T(\dot\bxi_4-\bxi_4)\}}

\begin{lma}\label{lma6}
	Suppose Condition A holds, As, $g,h,m\to\infty$,
	\begin{align*}
		\underset{\bomega\in \boldsymbol{N}}{\sup} g^{-1/4}\|\bK^{-1/2}\{\triangledown\bpsi(\bOmega)-\E \triangledown\bpsi(\bOmega)\}\bK^{-1/2}\|=o_p(1).
	\end{align*}
\end{lma}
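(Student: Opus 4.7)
The plan is to parallel the treatment of $T_3$ in the proof of Theorem~\ref{theorem1}, but applied one derivative higher: partition $\boldsymbol{N}$ into $N=O(g^{1/4})$ small cubes, apply Chebyshev plus a union bound at the grid centres, and control the within-cube oscillation via the mean value theorem. Since the parameter dimension $p+5$ is fixed, the squared Frobenius norm expands as a sum of a bounded number of squared entries, and since each row of $\bOmega$ can be bounded independently, it suffices to show that for each scalar entry $\psi_{rs}$ of $g^{-1/4}\bK^{-1/2}\{\triangledown\bpsi(\bomega)-\E\triangledown\bpsi(\bomega)\}\bK^{-1/2}$,
\[\sup_{\bomega\in\boldsymbol{N}}|\psi_{rs}(\bomega)|=o_p(1).\]

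I would cover $\boldsymbol{N}$ by $N=O(g^{1/4})$ cubes $\mathcal{C}(\boldsymbol{\zeta}_k)$ of $\bK^{1/2}$-diameter $Mg^{-1/4}$ and split the supremum into a grid-centre term and a within-cube oscillation term. For the grid-centre term, I would compute the variance of each entry of $\bK^{-1/2}\triangledown\bpsi(\boldsymbol{\zeta}_k)\bK^{-1/2}$ by expanding the second derivatives of the log-likelihood in terms of the residuals $\bar R,\barRia,\barRjb,\barRijab,R^{(w)}_{ijk}$, substituting the random-effect/error decomposition of $y_{ijk}$, and bounding the resulting variances uniformly on $\boldsymbol{N}$ by a finite constant via the moment bounds of Condition A (the same technology as in Lemma~\ref{lma3}). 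Chebyshev then gives $\pr(|\psi_{rs}(\boldsymbol{\zeta}_k)|>\eta)=O(\eta^{-2}g^{-1/2})$ for each grid centre, and the union bound over the $N=O(g^{1/4})$ centres yields $\pr(\max_k|\psi_{rs}(\boldsymbol{\zeta}_k)|>\eta)=O(\eta^{-2}g^{-1/4})\to 0$. For the within-cube oscillation, the mean value theorem expresses each entry of $\triangledown\bpsi(\bomega)-\triangledown\bpsi(\boldsymbol{\zeta}_k)$ as the corresponding entry of the third derivative of the log-likelihood (evaluated at an intermediate point) contracted against $\bomega-\boldsymbol{\zeta}_k$. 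I would then establish that the appropriately $\bK^{-1/2}$-normalised third-derivative array has norm $O_p(1)$ uniformly on $\boldsymbol{N}$ by bounding its expectation (analogously to Lemma~\ref{lma Bn}) and its stochastic fluctuation (analogously to Lemma~\ref{lma3}). Combined with $|\bK^{1/2}(\bomega-\boldsymbol{\zeta}_k)|\le Mg^{-1/4}$ and the outer $g^{-1/4}$ factor, the within-cube contribution is $O_p(g^{-1/2})=o_p(1)$.

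The main obstacle I anticipate is the bookkeeping: $\triangledown\bpsi$ has a $4\times 4$ block structure corresponding to the four parameter levels, and each block requires its own residual expansion, variance computation, and moment argument. The third-derivative control needed for the oscillation step multiplies this effort further, because one must track how products of factors $\lambda_0^{-j},\ldots,\lambda_4^{-j}$ of differing orders in $g$, $h$, and $m$ interact with the cross-level normalisations built into $\bK^{-1/2}$. Nevertheless, no fundamentally new techniques beyond those already developed in Lemmas~\ref{lma3}, \ref{lma Bn}, and~\ref{lma5} are required.
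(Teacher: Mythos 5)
Your proposal is workable but takes a genuinely different, and substantially heavier, route than the paper. The paper's proof is direct: it first observes that, conditional on the covariates, all second derivatives of the log-likelihood with respect to regression coefficients alone (e.g.\ $l_{\xi_0\xi_0}$, $l_{\bxi_1\bxi_1^T}$) are non-random, so the corresponding entries of $\triangledown\bpsi(\bOmega)-\E\triangledown\bpsi(\bOmega)$ vanish identically; the only surviving entries are those involving at least one variance-component derivative (e.g.\ $l_{\xi_0\siga^2}$, $l_{\siga^2\siga^2}$, $l_{\xi_{1q}\siga^2}$ in the $(a),(a)$ block). These few random entries are bounded directly as $O_p(g)$ (and the analogous rates blockwise) uniformly on $\boldsymbol{N}$, so after the $\bK^{-1/2}$ normalisation on both sides they are $O_p(1)$, and the extra $g^{-1/4}$ factor alone delivers the $o_p(1)$. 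No discretisation, union bound, or third-derivative control is needed; the chaining machinery you import from the $T_3$ argument is the right tool there (where the centered increments of the estimating function genuinely require it) but is overkill here. What your approach buys is generality --- it would work even if the centered Hessian entries were only $O_p$ at the critical rate rather than beating it by a power of $g$ --- at the cost of having to control the third-derivative array.

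One concrete soft spot in your outline: for the within-cube oscillation you need the normalised third-derivative array to be $O_p(1)$ \emph{uniformly} on $\boldsymbol{N}$, but the tools you cite (Lemma~\ref{lma3}-style variance bounds) only give pointwise-in-$\bomega$ control; bounding the stochastic fluctuation of the third derivative uniformly is structurally the same problem as Lemma~\ref{lma6} one order higher, so quoting the same technology risks circularity. This is fixable --- on $\boldsymbol{N}$ the deterministic coefficients ($\lambda_j^{-k}$ etc.) are uniformly bounded and the residuals are affine in $\bxi$, so each third-derivative entry admits an $\bomega$-free random envelope with finite mean, and Markov gives the uniform $O_p(1)$ --- but you should say so explicitly rather than appeal to the pointwise lemmas.
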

\begin{proof}
	The non-zero elements in the $(p_a+2)\times(p_a+2)$ matrix $\triangledown\bpsi^{(a),(a)}(\bomega)-\E \triangledown\bpsi^{(a),(a)}(\bomega)$ are
	\begin{align*}
		&|l_{\xi_0\siga^2}(\bomega)-\E l_{\xi_0\siga^2}(\bomega)|=O_p(g),
		\hspace{1ex} |l_{\siga^2\siga^2}(\bomega)-\E l_{\siga^2\siga^2}(\bomega)|
		=O_p(g)\\&
  \text{and}
 \hspace{1ex} |l_{\xi_{1q}\siga^2}(\bomega)-\E l_{\xi_{1q}\siga^2}(\bomega)|=O_p(g),
 \hspace{1ex} \text{for} \hspace{1ex} q=1,\ldots,p_a.
	\end{align*}
	Accordingly, it is enough to show the uniform convergence to zero of the elements of 
	$g^{-5/4}\{\triangledown\bpsi^{(a),(a)}(\bOmega^{(a)})-\E \triangledown\bpsi^{(a),(a)}(\bOmega^{(a)})\}$.
	
	Similarly, we can also obtain the uniform convergence to zero of the remaining elements of  
	$g^{-3/4}h^{-1/2}\{\triangledown\bpsi^{(a),(b)}(\bOmega^{(a)})-\E \triangledown\bpsi^{(a),(b)}(\bOmega^{(a)})\}$,
	$g^{-5/4}h^{-1/2}\{\triangledown\bpsi^{(a),(ab)}(\bOmega^{(a)})-\E \triangledown\bpsi^{(a),(ab)}(\bOmega^{(a)})\}$,
	$g^{-3/4}n^{-1/2}\{\triangledown\bpsi^{(a),(w)}(\bOmega^{(a)})-\E \triangledown\bpsi^{(a),(w)}(\bOmega^{(a)})\}$,
	$g^{-1/4}h^{-1}\{\triangledown\bpsi^{(b),(b)}(\bOmega^{(b)})-\E \triangledown\bpsi^{(b),(b)}(\bOmega^{(b)})\}$,
	$g^{-3/4}h^{-1}\{\triangledown\bpsi^{(b),(ab)}(\bOmega^{(b)})-\E \triangledown\bpsi^{(b),(ab)}(\bOmega^{(b)})\}$,
	$g^{-3/4}h^{-1}m^{-1/2}\{\triangledown\bpsi^{(b),(w)}(\bOmega^{(b)})-\E \triangledown\bpsi^{(b),(w)}(\bOmega^{(b)})\}$,
	$g^{-5/4}h^{-1}\{\triangledown\bpsi^{(ab),(ab)}(\bOmega^{(ab)})-\E \triangledown\bpsi^{(ab),(ab)}(\bOmega^{(ab)})\}$,
	$g^{-5/4}h^{-1}m^{-1/2}\{\triangledown\bpsi^{(ab),(w)}(\bOmega^{(ab)})-\E \triangledown\bpsi^{(ab),(w)}(\bOmega^{(ab)})\}$
	and
	$g^{-1/4}n^{-1}\{\triangledown\bpsi^{(w),(w)}(\bOmega^{(w)})-\E \triangledown\bpsi^{(w),(w)}(\bOmega^{(w)})\}$. Details can be found in {Supplementary Section S.6.6}.
\end{proof}


\section{Discussion}\label{sec:7}
We have established asymptotic theory for the two-way crossed random effect with interaction model. We imposed mild conditions to derive the asymptotic properties of its estimators. Importantly, we did not constrain the rates for \( g \), \( h \), and \( m \) to diverge to infinity, and we assumed moment conditions rather than normality. 
The main challenges in this research were to calculate the inverse of \( \mathbf{V} \), to obtain tractable expressions for the likelihood estimating equations, and to group the parameters into four clear groups (row, column, interaction, and within-cell parameters) corresponding to different levels of information in the data. This approach eliminates the need for rate restrictions on \( g \), \( h \), and \( m \), and also produces an elegantly structured asymptotic covariance matrix.

The scope of this paper is confined to the two-way crossed random effect with interaction model, which serves as a canonical example of crossed random effect models, featuring both the crossed structure and interaction terms. Our analysis of this model's asymptotic properties sets the stage for broadening these approaches to a wider variety of crossed effect models.
We addressed the balanced case to use the simplifications afforded by the Kronecker product notation in computation and derivation. Despite this simplification, the calculations remain challenging, with all details relegated to the supplementary material. 
Expanding our approach to the unbalanced case follows the same principles, though the resulting expressions and derivations become even more complicated. Our theoretical and simulation results suggest that with \( g \) and \( h \) increasing indefinitely, we can achieve asymptotic normality for all estimators. Importantly, our framework accommodates an infinite increase in \( m \), a necessary condition for the consistent prediction of random effects as outlined by \cite{jiang1998asymptotic}.
For instance, \cite{lyu2023small,lyu2019eblup} have shown that \( m \) tending to infinity is required to establish asymptotic normality for the best linear unbiased predictor (EBLUP) and for model-based small area estimators. Predictions of random effects for the two-way crossed random effect with interaction model will be pursued in future work. 

%
\begin{appendix}

\section{Notation }\label{appA}
We define 4 means for each variable with 3 subscripts,
\begin{align*}
&\yddd=n^{-1}\sumig\sumjh\sumkm y_{ijk},	&&\barbxw=n^{-1}\sumig\sumjh\sumkm\bxwijk, &&\eddd=n^{-1}\sumig\sumjh\sumkm e_{ijk},
	\\ 
	&\yijd=m^{-1}\sumkm y_{ijk}, &&\yidd=(hm)^{-1}\sumjh\sumkm  y_{ijk},&&\ydjd=(gm)^{-1}\sumig \sumkm  y_{ijk}, 
	\\	
	&\barbxijw=m^{-1}\sumkm\bxwijk,
	&&\barbxiw=(hm)^{-1}\sumjh\sumkm\bxwijk, &&\barbxjw=(gm)^{-1}\sumig\sumkm\bxwijk, 
	\\
	&\eijd=m^{-1}\sumkm e_{ijk}, 	
	&&\eidd=(hm)^{-1}\sumjh\sumkm e_{ijk},&&\edjd=(gm)^{-1}\sumig\sumkm e_{ijk},
\end{align*}
and then 3 means for each variable with 2 subscripts,
\begin{align*}
&\barbxiab=h^{-1}\sumjh\bxabij,
	&&\barbxjab=g^{-1}\sumig\bxabij, 
	&&\barbxab=(gh)^{-1}\sumig\sumjh\bxabij,
 \\
	 	& \gammaid=h^{-1}\sumjh\gamma_{ij},
	&&\gammadj=g^{-1}\sumig\gamma_{ij},
	&&\gammadd=(gh)^{-1}\sumig\sumjh\gamma_{ij}.
\end{align*}
Finally for the variables with a single subscript, we define
\begin{align*}
&\barbxa=g^{-1}\sumig\bxai, &&\barbxb=h^{-1}\sumjh\bxbj, 
	 	&&\bar\alpha=g^{-1}\sumig\alpha_i, &&\bar\beta=h^{-1}\sumjh \beta_j.
\end{align*}
We denote 
\begin{align*}
& \cyijkw=y_{ijk}-\yijd, &&\cyijab=\yijd-\yidd-\ydjd+\yddd, &&\cyia=\yidd-\yddd, 	\\
&\cxijkw=\bxwijk-\barbxijw, &&\cxijw=\barbxijw-\barbxiw-\barbxjw+\barbxw, &&\cxiw=\barbxiw-\barbxw,\\
& &&\cxijab=\bxabij-\barbxiab-\barbxjab+\barbxab,
&&\cxiab=\barbxiab-\barbxab,
\\
& && &&\cxia=\bxai-\barbxa,\\
\end{align*}
with the single subscript $j$ terms defined analogously to the subscript $i$ terms as
\begin{align*}
&\cyjb=\ydjd-\yddd, &&\cxjw=\barbxjw-\barbxw, &&\cxjab=\barbxjab-\barbxab, &&\cxjb=\bxbj-\barbxb.
\end{align*}

We also define some notation for sum squares:
\begin{enumerate}
	\item Sums of squares for factor A:
	$SA_{(a)}^x=\sumig\cxia\cxiat$,
	$SA_{(a)}^{xy}=\sumig\cxia\cyia$,
	$SA_{(a),(ab)}^x=\sumig\cxia\cxiabt$,
	$SA_{(a),(w)}^x=\sumig\cxia\cxiwt$, 
	$SA_{(ab)}^x=\sumig\cxiab\cxiabt$,
	$SA_{(ab)}^{xy}=\sumig\cxiab\cyia$,
	$SA_{(ab),(w)}^x=\sumig\cxiab\cxiwt$,
	$SA_{(w)}^x=\sumig\cxiw\cxiwt$,
	$SA_{(w)}^{xy}=\sumig\cxiw\cyia$.
	\item Sums of squares for factor B:
	$SB_{(b)}^x=\sumjh\cxjb\cxjbt$,
	$SB_{(b)}^{xy}=\sumjh\cxjb\cyjb$,
	$SB_{(b),(ab)}^x=\sumjh\cxjb\cxjabt$,
	$SB_{(b),(w)}^x=\sumjh\cxjb\cxjwt$, 
	$SB_{(ab)}^x=\sumjh\cxjab\cxjabt$,
	$SB_{(ab)}^{xy}=\sumjh\cxjab\cyjb$,
	$SB_{(ab),(w)}^x=\sumjh\cxjab\cxjwt$,
	$SB_{(w)}^x=\sumjh\cxjw\cxjwt$,
	$SB_{(w)}^{xy}=\sumjh\cxjw\cyjb$.
	\item Sums of squares for interactions:
	$SAB_{(ab)}^x=\sumig\sumjh\cxijab\cxijabt$,\\
	$SAB_{(ab)}^{xy}=\sumig\sumjh\cxijab\cyijab$,
	$SAB_{(ab),(w)}^{x}=\sumig\sumjh\cxijab\cxijwt$,\\
	$SAB_{(w)}^{x}=\sumig\sumjh\cxijw\cxijwt$,
	$SAB_{(w)}^{xy}=\sumig\sumjh\cxijw\cyijab$.
	\item Sums squares within cells:
	$SW_{(w)}^x=\sumig\sumjh\sumkm\cxijkw\cxijkwt$ and\\
	$SW_{(w)}^{xy}=\sumig\sumjh\sumkm\cxijkw\cyijkw$.
\end{enumerate}

\section{Inverse of $\bV$}\label{appendix-1}\label{AppendixB}
Following the algorithm of \citep{searle1979dispersion}, we define
\begin{align*}
	\bT_3=\left[ \begin{matrix}
		1&0\\1&g
	\end{matrix}\right] \otimes\left[ \begin{matrix}
		1&0\\1&h
	\end{matrix}\right]\otimes \left[ \begin{matrix}
		1&0\\1&m
	\end{matrix}\right].
\end{align*}
Then we let $\blambda=\bT_3\btheta$, so that
\begin{align*}
	\blambda=
	\left[\begin{matrix}
		1&0&0&0&0&0&0&0\\
		1&m&0&0&0&0&0&0\\
		1&0&h&0&0&0&0&0\\
		1&m&h&hm&0&0&0&0\\
		1&0&0&0&g&0&0&0\\
		1&m&0&0&g&gm&0&0\\
		1&0&h&0&g&0&gh&0\\
		1&m&h&hm&g&gm&gh&ghm
	\end{matrix}	
	\right] 
	\left[ \begin{matrix}
		\sigma_e^2\\\sigma_\gamma^2\\0\\\sigma_\alpha^2\\0\\\sigma_\beta^2\\0\\0		
	\end{matrix}
	\right]=
	\left[\begin{matrix}
		\sigma_e^2\\
		\sigma_e^2+m\sigma_\gamma^2\\
		\sigma_e^2\\
		\sigma_e^2+m\sigma_\gamma^2+hm\sigma_\alpha^2\\
		\sigma_e^2\\
		\sigma_e^2+m\sigma_\gamma^2+gm\sigma_\beta^2\\
		\sigma_e^2\\
		\sigma_e^2+m\sigma_\gamma^2+hm\sigma_\alpha^2+gm\sigma_\beta^2
	\end{matrix}
	\right] =\left[ \begin{matrix}
		\lambda_0\\\lambda_1\\\lambda_0\\\lambda_2\\\lambda_0\\\lambda_3\\\lambda_0\\\lambda_4
	\end{matrix}\right],
\end{align*}
where 
\begin{align*}
	\lambda_0&=\sigma_e^2,&\lambda_1&=\sigma_e^2+m\sigma_\gamma^2,
	&\lambda_2&=\sigma_e^2+m\sigma_\gamma^2+hm\sigma_\alpha^2,\\
	\lambda_3&=\sigma_e^2+m\sigma_\gamma^2+gm\sigma_\beta^2
	\quad\text{and}&
	\lambda_4&=\sigma_e^2+m\sigma_\gamma^2+hm\sigma_\alpha^2+gm\sigma_\beta^2.
\end{align*}

With $\bnu$ having elements that are reciprocals of the elements of $\blambda$, the equation $\boldsymbol{{\mathsf{t}}}=\bT^{-1}\bnu$ gives
\begin{align*}
	\boldsymbol{{\mathsf{t}}}&=\frac{1}{ghm}
	\left[\begin{matrix}
		g&0\\-1&1
	\end{matrix} \right] 
	\otimes
	\left[\begin{matrix}
		h&0\\-1&1
	\end{matrix} \right] \otimes
	\left[\begin{matrix}
		m&0\\-1&1
	\end{matrix} \right] \bnu\\
	&=\frac{1}{ghm}
	\left[ \begin{matrix}
		ghm&0&0&0&0&0&0&\\
		-gh&gh&0&0&0&0&0&0\\
		-gm&0&gm&0&0&0&0&0\\
		g&-g&-g&g&0&0&0&0\\
		-hm&0&0&0&hm&0&0&0\\
		h&-h&0&0&-h&h&0&0\\
		m&0&-m&0&-m&0&m&0\\
		-1&1&1&-1&1&-1&-1&1
	\end{matrix}\right] 
	\left[ \begin{matrix}
		\frac{1}{\lambda_0}\\\frac{1}{\lambda_1}\\\frac{1}{\lambda_0}\\\frac{1}{\lambda_2}\\\frac{1}{\lambda_0}\\\frac{1}{\lambda_3}\\\frac{1}{\lambda_0}\\\frac{1}{\lambda_4}
	\end{matrix}\right] =
	\left[ \begin{matrix}
		\mathsf{t}_{000}\\\mathsf{t}_{001}\\0\\\mathsf{t}_{011}\\0\\\mathsf{t}_{101}\\0\\\mathsf{t}_{111}\\
	\end{matrix}\right],
\end{align*}
where
\begin{flalign*}
	&\mathsf{t}_{000}=\frac{1}{\lambda_0},&&\mathsf{t}_{001}=\frac{1}{m}(\frac{1}{\lambda_1}-\frac{1}{\lambda_0}),
	&&\mathsf{t}_{011}=\frac{1}{hm}(\frac{1}{\lambda_2}-\frac{1}{\lambda_1}),
	\\&\mathsf{t}_{101}=\frac{1}{gm}(\frac{1}{\lambda_3}-\frac{1}{\lambda_1}),
	&&\mathsf{t}_{111}=\frac{1}{ghm}(\frac{1}{\lambda_1}-\frac{1}{\lambda_2}-\frac{1}{\lambda_3}+\frac{1}{\lambda_4}).
\end{flalign*}
Then the inverse of  $\bV$ is given by
\begin{align*}
	\bV^{-1}=&\mathsf{t}_{000}(\bI_g\otimes\bI_h\otimes\bI_m)+\mathsf{t}_{001}(\bI_g\otimes\bI_h\otimes\bJ_m)+
	\mathsf{t}_{011}(\bI_g\otimes\bJ_h\otimes\bJ_m)\\&
 +\mathsf{t}_{101}(\bJ_g\otimes\bI_h\otimes\bJ_m)+\mathsf{t}_{111}(\bJ_g\otimes\bJ_h\otimes\bJ_m).
\end{align*}
Substituting  the $\mathsf{t}$s with $\lambda$s,  we rewrite the inverse of $\bV$ as 
\begin{align*}
	\bV^{-1}&=\mathsf{t}_{000}(\bI_g\otimes\bI_h\otimes\bI_m)+\mathsf{t}_{001}(\bI_g\otimes\bI_h\otimes\bJ_m)+
	\mathsf{t}_{011}(\bI_g\otimes\bJ_h\otimes\bJ_m)\\&\quad
	+\mathsf{t}_{101}(\bJ_g\otimes\bI_h\otimes\bJ_m)+\mathsf{t}_{111}(\bJ_g\otimes\bJ_h\otimes\bJ_m)\\&
	=\frac{1}{\lambda_0}(\bI_g\otimes\bI_h\otimes\bI_m)+\frac{1}{m}(\frac{1}{\lambda_1}-\frac{1}{\lambda_0})(\bI_g\otimes\bI_h\otimes\bJ_m)
	+\frac{1}{hm}(\frac{1}{\lambda_2}-\frac{1}{\lambda_1})(\bI_g\otimes\bJ_h\otimes\bJ_m)\\&\qquad
	+\frac{1}{gm}(\frac{1}{\lambda_3}-\frac{1}{\lambda_1})(\bJ_g\otimes\bI_h\otimes\bJ_m)
	+\frac{1}{ghm}(\frac{1}{\lambda_1}-\frac{1}{\lambda_2}-\frac{1}{\lambda_3}+\frac{1}{\lambda_4})(\bJ_g\otimes\bJ_h\otimes\bJ_m)\\&
	=\frac{1}{\lambda_0}(\bI_g\otimes\bI_h\otimes\bC_m)+\frac{1}{\lambda_1}(\bC_g\otimes\bC_h\otimes\bar{\bJ}_m)+\frac{1}{\lambda_2}(\bC_g\otimes\bar{\bJ}_h\otimes\bar{\bJ}_m)+\frac{1}{\lambda_3}(\bar{\bJ}_g\otimes\bC_h\otimes\bar{\bJ}_m)\\&\qquad
	+\frac{1}{\lambda_4}(\bar{\bJ}_g\otimes\bar{\bJ}_h\otimes\bar{\bJ}_m),
\end{align*}
where $\bar{\bJ}_g=\frac{1}{g}{\bJ}_g$ and $\bC_g=\bI_g-\bar\bJ_g$.  We have the identities 
\begin{align*}\label{iden}
	\tr(\bar{\bJ}_g)=1,\qquad\tr(\bC_g)=g-1,\qquad\bC_g\bJ_g=\bzero , \qquad \bar{\bJ}_g\bJ_g=\bJ_g,\qquad\bone_g^T\bC_g=\bzero,\qquad\bar\bJ_g\bone_g=\bone_g.
\end{align*}
 
\end{appendix}


\begin{funding}
SAS and AHW are respectively supported by the Australian Research Council Discovery Projects DP220103269 and DP230101908.
\end{funding}

\begin{supplement}
\stitle{Additional Information: Calculations and Proofs}
\sdescription{Comprehensive calculations and detailed proofs support the findings presented in the main manuscript.}
\end{supplement}
\begin{supplement}
\stitle{Simulation Resources: Code and Data Tables}
\sdescription{This part includes the R code used for simulations.}
\end{supplement}
 


	\bibliographystyle{imsart-number}
\bibliography{mythesisbib}
\end{document}